\newtheorem{corollary}{Corollary}
\newtheorem{theorem}{Theorem}
\newtheorem{proposition}{Proposition}
\newtheorem{remark}{Remark}
\theoremstyle{definition}
\newtheorem{definition}{Definition}
\newcommand{\RR}{\mathbb R}
\newcommand{\f}[1]{\mathbf{#1}}
\newcommand\Body{\Omega}
\newcommand{\disp}{w}
\newcommand{\dispTF}{v}
\newcommand{\force}{f}
\newcommand{\normal}{\f {n}}
\newcommand{\V}{\mathcal{V}}
\renewcommand{\S}{\mathcal{S}}
\newcommand{\btau}{\boldsymbol{\tau}}
\newcommand{\bnu}{\boldsymbol{\nu}}
\newcommand{\bn}{\boldsymbol{d}}
\newcommand{\bv}{\boldsymbol{v}}
\newcommand{\bx}{x,y}
\newcommand{\matrixSystem}{\mathrm{A}}
\newcommand{\rhs}{b}
\newcommand{\sol}{x}
\newcommand{\solBCContDer}{z}
\newcommand{\ConstraintDer}{\mathrm{C}_{1}}
\newcommand{\NSBCCont}{\mathrm{N}_0}
\newcommand{\NSBCContDer}{\mathrm{N}_1}
\newcommand{\Id}{I}
\begin{document}

\begin{frontmatter}
  
\title{  Analysis-suitable $G^1$ multi-patch
  parametrizations \\ for $C^1$ isogeometric spaces}

\author[1]{Annabelle Collin}
\author[1,2]{Giancarlo Sangalli}
\author[1]{Thomas Takacs}

\address[1]{Dipartimento di Matematica ``F. Casorati'', Universit\`a degli Studi di Pavia, Italy}
\address[2]{Istituto di Matematica Applicata e Tecnologie Informatiche
  ``E. Magenes'' (CNR), Italy}

\begin{abstract}
 One key feature of isogeometric analysis is that it allows  smooth shape
functions. Indeed, when isogeometric spaces are constructed from $p$-degree splines (and extensions,
such as NURBS), they enjoy up to $C^{p-1}$ continuity within  each  patch. However,
global continuity beyond $C^0$ on so-called multi-patch geometries
 poses some significant difficulties.  In this work, we consider
 planar multi-patch
 domains that have a parametrization which is  only $C^0$
 at the patch interface. On such domains we study  the 
 $h$-refinement of $C^1$-continuous  isogeometric
spaces. These spaces in general do not have  optimal approximation 
properties. The reason is that the $C^1$-continuity condition easily
over-constrains the solution which is, in the worst cases, fully
\emph{locked} to linears at the  patch interface. However, recently 
\cite{kapl-vitrih-juttler-birner-15} has given numerical evidence that optimal convergence
occurs for  bilinear two-patch geometries and cubic (or higher degree)
$C^1$ splines. This
is the starting point of our study. We introduce
the class of analysis-suitable $G^1$ 
geometry parametrizations, which includes piecewise bilinear
parametrizations.  We then
analyze the structure of $C^1$ isogeometric  spaces over analysis-suitable $G^1$ parametrizations and, by
theoretical results and numerical testing, discuss their 
approximation properties. We also consider examples of geometry
parametrizations that are not analysis-suitable,  showing
that in this case  optimal convergence of $C^1$ isogeometric  spaces is prevented.
\end{abstract}

\end{frontmatter}

\section{Introduction}
\label{intro}

Thanks to the use of smooth B-splines and NURBS, isogeometric  methods \cite{hughes2005isogeometric,cottrell2009isogeometric}
 have revitalized   the interest  for the use of  smooth approximating
functions for the numerical solution of partial differential equations. 
Advantages with respect to $C^0$ finite element methods are
improved accuracy and spectral
properties \cite{evans2009n,da2011some,hughes2008duality,Takacs:2015aa}
and the possibility to directly discretize differential operators of 
order higher than $2$. In the literature there are indeed many examples 
of isogeometric methods for $4^{th}$ order differential problems of
 relevant interest, such as Kirchhoff-Love plates/shells 
\cite{kiendl-bletzinger-linhard-09,benson2011large,da2012isogeometric}, the Cahn-Hilliard equation
\cite{gomez2008isogeometric}, and the Navier-Stokes-Korteweg equation 
\cite{gomez2010isogeometric}. 

Since higher dimensional spline spaces possess a tensor-product structure, the representation 
of domains that have a complex geometry is non-trivial. In this paper we focus on multi-patch representations. 
While the implementation of $C^0$-continuity over multi-patch domains is well understood (see e.g. \cite{Kleiss2012,da2014mathematical,scott2014isogeometric} for strong and \cite{brivadis2015isogeometric} for weak imposition 
of the $C^0$ conditions), $C^1$-continuity is not. Several studies have tackled the problem of constructing 
function spaces of $C^1$ or higher order smoothness. A first attempt to compare different ways to impose $C^1$-continuity 
in isogeometric analysis was presented in \cite{nguyen2014comparative}. We also refer to 
\cite{Buchegger2015,juttler2015isogeometric,wu2015bicubic} for $C^1$ smooth constructions for 
spline spaces and \cite{speleers2012isogeometric,lyche2014hermite} for triangulations, which can be seen as an alternative to the classical B-spline based isogeometric framework. 
Nevertheless, the construction of smooth isogeometric spaces with optimal
approximation properties on complex geometries is still an open and challenging
problem. This is related to the problem of finding 
parametrizations  of smooth surfaces having complex topology, which
is a fundamental  area of research in the community of Computer Aided
Geometrid Design (CAGD) over the last decades.

\begin{figure}[h]
  \centering
  \includegraphics[width=.35\textwidth]{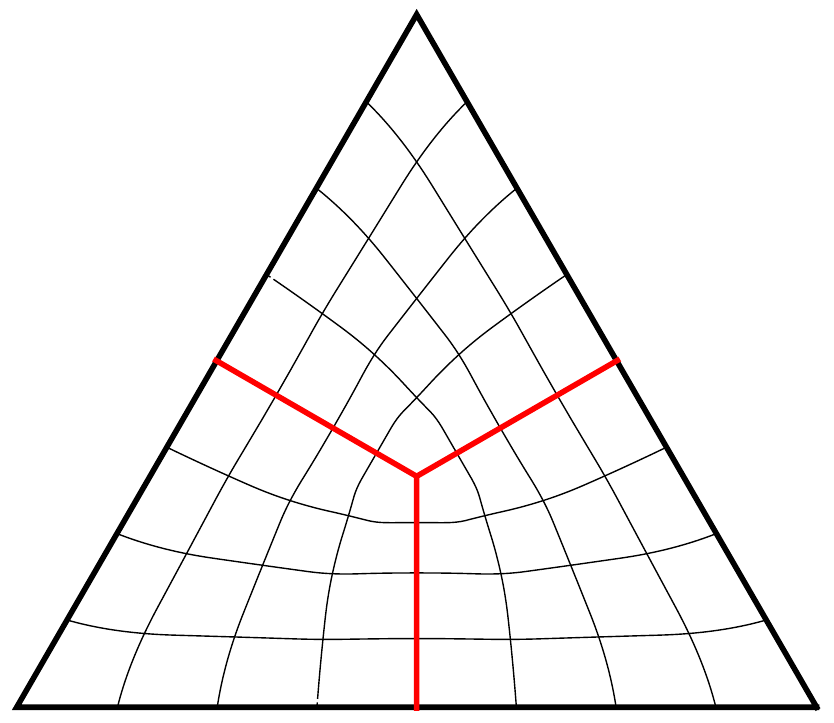}\hspace{.1\textwidth}
  \includegraphics[width=.35\textwidth]{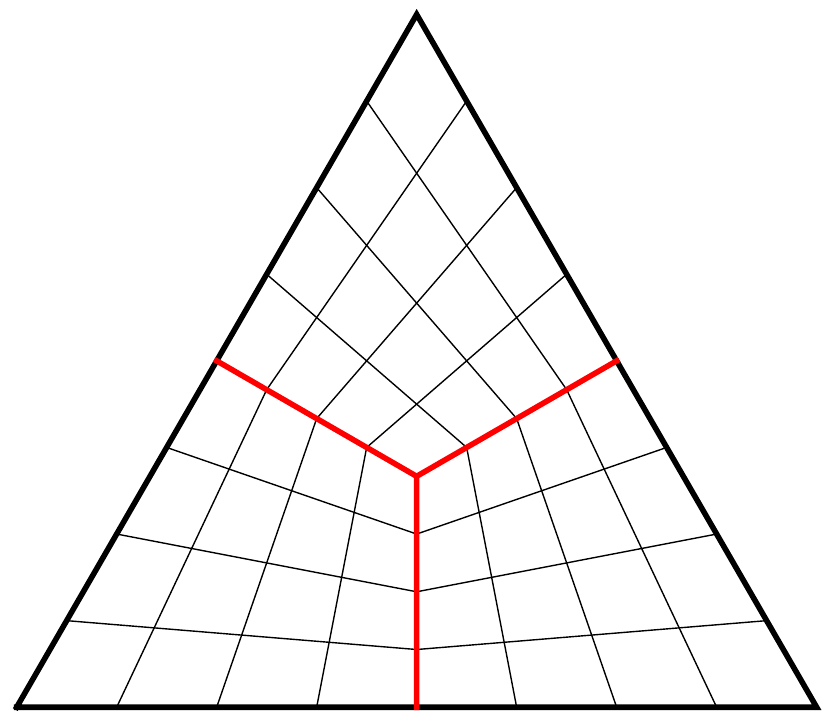}
 \caption{Two possible parametrization schemes: $C^1$ away from extraordinary points (left) and $C^0$ everywhere (right).}\label{fig:triangle-param-comparison}
\end{figure}

We review two
different strategies for constructing smooth multi-patch geometries and corresponding 
isogeometric spaces. One possibility is
to adopt a geometry parametrization which is globally smooth almost everywhere, 
with the exception of  a neighborhood of the extraordinary
points (or edges in 3D), see  Figure~\ref{fig:triangle-param-comparison} (left).  The other possibility is to use geometry parametrizations that are only
$C^0$ at patch interfaces, see  Figure~\ref{fig:triangle-param-comparison} (right).  The first option includes 
subdivision surfaces \cite{cirak2002integrated} and the T-spline
construction in \cite{scott2013isogeometric} and,  while
possessing attractive features, they  seem to possess optimal approximation properties for some configurations, see \cite{nguyen2016}, but in general lack accuracy
\cite{nguyen2014comparative,juttler2015isogeometric}. 
In our work we consider the second 
possibility, corresponding to the right part of Figure~\ref{fig:triangle-param-comparison}. 

The construction of  $C^1$  isogeometric functions over a $C^0$
parametrization can be interpreted conveniently as geometric  continuity $G^1$ of
the graph parametrization.
Bilinear multi-patch parametrizations of a  planar domain have been
analyzed in \cite{Matskewich-PhD,bercovier2014smooth},  where it is
shown that there exists a minimal determining set with  local degrees
of freedom for the space of (mapped) piecewise polynomial functions, with
global $C^1$ continuity,  if the polynomial  degree is high enough
($4$ if some additional conditions are fulfilled, $5$ in general). 
The recent preprint \cite{mourrain2015geometrically}  generalizes
the previous results, by  using advanced homology techniques, to arbitrary parametrizations, allowing
both triangular and quadrilateral patches. 

In the  work \cite{kapl-vitrih-juttler-birner-15}, the authors  consider splines
instead of polynomials within each patch,  construct a basis,
analyze  the space dimensionality of some configurations  and, for
the first time, perform numerical tests  to evaluate the order of
convergence when each patch is  $h$-refined. We recall that within the isogeometric
 framework, the concept of $h$-refinement, equivalent to knot
 insertion, 
is one of the three  constructions to increase accuracy of the spline
spaces (see
\cite{cottrell2009isogeometric}).  Remarkably,
\cite{kapl-vitrih-juttler-birner-15}  gives numerical evidence of 
optimal convergence for  $C^1$ splines of degree $3$ (or higher), on a two-patch bilinear geometry.  The 
authors also  show an example of \emph{over-constrained}  $C^1$
isogeometric  spaces, corresponding to a two-patch  non-bilinear
geometry parametrization. We refer to the latter situation as \emph{$C^1$ locking}.
Our  work develops the underlying theory.  While the previous papers
give explicit charaterizations in the form of minimal determining
sets \cite{Matskewich-PhD,bercovier2014smooth} or basis constructions
\cite{kapl-vitrih-juttler-birner-15,kapl-buchegger-bercovier-juttler-16},
 we use  an implicit characterization of the continuity conditions and
derive from it  information on the structure of the isogeometric
 space. 
As in \cite{kapl-vitrih-juttler-birner-15}, our interest is in the
 impact of $h$-refinement.  We study
$h$-refinement for arbitrary degree and regularity,  both
theoretically and numerically,  and point out its 
performance depending on the geometry parametrization.

 We set up our notation in Section \ref{preliminaries}. In Section
\ref{sec:C1-isogeometric-spaces} we define the class of analysis-suitable (AS) $G^1$ geometry parametrizations, which includes the
bilinear ones  and the extensions presented in Section 3.4 of 
\cite{kapl-vitrih-juttler-birner-15}. 
Then, in Section~\ref{two-patch-AS}, we study the structure of $C^1$ isogeometric
spaces over AS $G^1$ two-patch geometries.  Here, we give an
explanation of the optimal convergence of  $p$-degree isogeometric functions,   having $C^1$ continuity
across the patch interface and up to $C^{p-2}$ continuity within  the
patches. Furthermore, we discuss why  $C^1$ locking occurs for $C^{p-1}$
continuity within  the patches.  Note that in this paper we do not
derive  explicit error estimates, which will be the topic of a
future paper. In Section~\ref{two-patch-GENERAL} we
analyze $C^1$ isogeometric spaces constructed over more general
geometry parametrizations and conclude that $h$-convergence is suboptimal  beyond AS
$G^1$  geometries.  The extensions to surface domains and to NURBS are briefly discussed in Sections
\ref{surfaces} and \ref{sec:beyond-splines}, respectively. Numerical
tests on two-~and~multi-patch domains  are reported in
Section~\ref{numerical_tests}.  
There we present a significant example:  the multi-patch
parametrization of a smooth simply-connected planar domain. The
question of  existence and 
construction methods for AS $G^1$ multi-patch  parametrizations of
arbitrary  geometries remains to be studied.  
We summarize our results and draw conclusions in Section~\ref{conclusions}.
 
\section{Planar multi-patch spline parametrizations and isogeometric spaces}
\label{preliminaries}

Given an interval or a rectangle $\omega$,  we denote by  $\mathcal{S}^{p}_{r} (\omega)$ the spline space of
degree $p$ (in each direction) and continuity $C^r$ at all interior
knots. The knot  mesh in the parametric domain is assumed to be
uniform, with  mesh-size $h$ (which is  not explicitly
indicated in the notation) and interior knot multiplicity $p-r$. 
We write $\mathcal{S}^{p}_{r} $ instead of
$\mathcal{S}^{p}_{r} (\omega)$ when the domain $\omega$ is obvious
from the context.  We allow $r \geq p$, which stands for  $C^\infty$
continuity, that  is,  the case of  global tensor-product
polynomials on $\omega$. In this case we also use the notation
$\mathcal{P}^{p} (\omega) = \mathcal{S}^{p}_{p} (\omega)= \mathcal{S}^{p}_{p+1} (\omega)=\ldots$. 

 Consider a planar multi-patch domain of interest
\begin{equation}\label{eq:multi-patch-Omega} 
	\Omega = \Omega^{(1)} \cup \ldots \cup \Omega^{(N)} \subset \RR^2,
\end{equation}
where the closed sets $ \Omega^{(i)}$ form a regular  partition
(with disjoint interior). For simplicity, we do not allow hanging nodes. Each
$\Omega^{(i)}$ is assumed to be a   spline patch, that is 
\begin{eqnarray}
	\f F^{(i)}&:&[0,1]\times[0,1]= \widehat\Omega\rightarrow \Omega^{(i)}, \label{eq:F}
\end{eqnarray}
where $\f F^{(i)}\in \mathcal{S}^{p}_{r} (\widehat\Omega)\times \mathcal{S}^{p}_{r} (\widehat\Omega)$. We assume
\begin{equation}
  \label{eq:r-geq-1}
  r \geq 1
\end{equation}
($r \geq p$ means we have   Bezier patches $\Omega^{(i)}$) and  we
assume the parametrizations are not singular, i.e., for all $ i$ and
for all $ (u,v) \in \widehat\Omega$, 
\begin{equation}\label{eq:non-singular-F}
\det  \left[ \begin{array}{ll}
           D_u \f F^{(i)} (u,v) & D_v \f F^{(i)}(u,v) \\
         \end{array}\right] \neq 0.
\end{equation}

 For the sake of simplicity we do not consider more general configurations, e.g., non-uniform knot meshes, different degree
  or continuity parameters in each parametric direction,
  different  continuity at different knots, or  different spline
  spaces for different patches. Indeed, our simple configuration
  already presents the key features and difficulties we are interested
  in.  

We  assume global  continuity of the patch 
parametrizations. This means the following. Let  us fix  $ \Gamma =
\Gamma^{(i,j)} =  \Omega^{(i)} \cap 
\Omega^{(j)}$. When using the superscript as $(i,j)$  in the paper, we assume
implicitly that $i$ and $j$ are such that $\Gamma^{(i,j)}$ is not a point or an
 empty set. Let $\f F^{(L)} $, $\f F^{(R)}$ be given  such that
\begin{equation}
  \label{eq:F-left-and-right}
  \begin{aligned}
    	\f F^{(L)}&: [-1,0]\times[0,1]=
        \widehat\Omega^{(L)}\rightarrow \Omega^{(L)} = \Omega^{(i)}, \\
	\f F^{(R)}&: [0,1]\times[0,1]=
        \widehat\Omega^{(R)}\rightarrow \Omega^{(R)} = \Omega^{(j)}, 
  \end{aligned}
\end{equation}
where $(\f F^{(L)})^{-1} \circ \f F^{(i) }$ and $(\f
F^{(R)})^{-1} \circ \f F^{(j) }$  are  linear transformations 
(combinations of a translation, rotation and symmetry). Moreover, the
parametrizations agree at $u=0$, i.e., there is a $\f F_0:[0,1] \rightarrow \RR^2$ with
\begin{equation}
  \label{eq:Gamma}
  \begin{aligned}
      \Gamma &= \{\f  F_0(v) = \f F^{(L)}(0,v) = \f F^{(R)}(0,v), v\in[0,1] \}.
  \end{aligned}
\end{equation}
An example is depicted in Figure~\ref{fig:F-left-and-right}.
\begin{figure}[h]
  \centering
     \includegraphics[trim=0  250 0 50, clip,width=\textwidth]{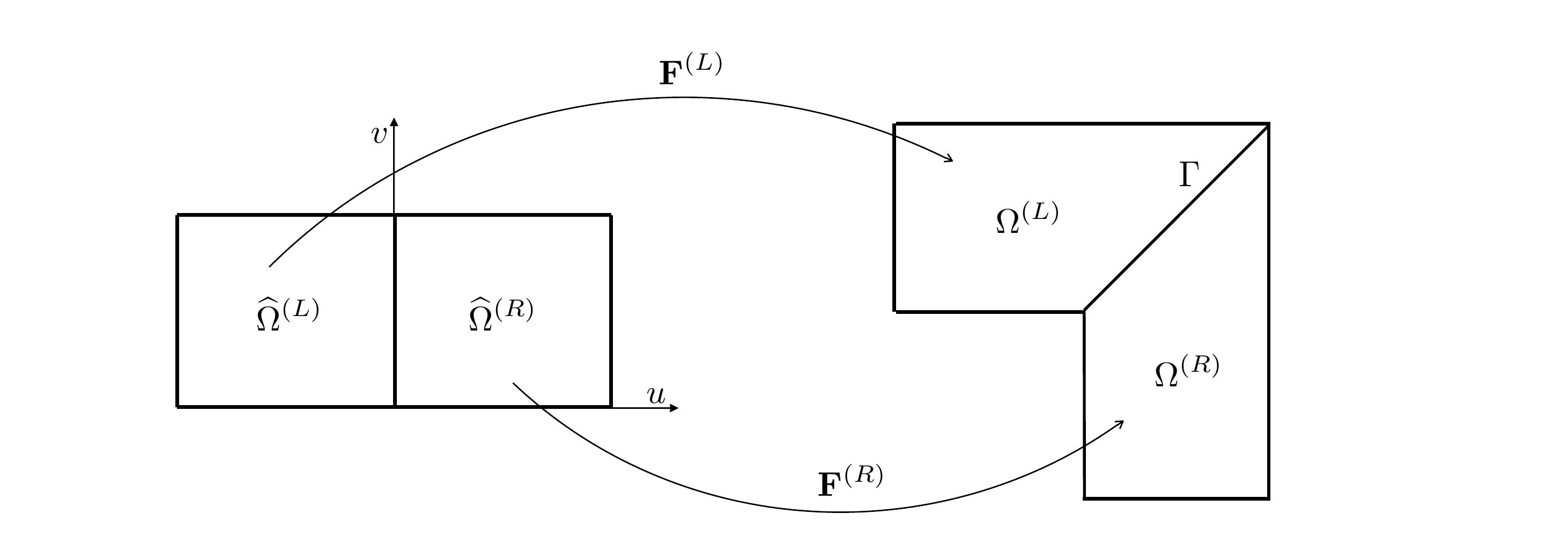}
  \caption{Example of the general setting of \eqref{eq:F-left-and-right}--\eqref{eq:Gamma}.}\label{fig:F-left-and-right}
\end{figure}

\begin{remark}
The domain  $\Omega = \Omega^{(1)} \cup \ldots \cup \Omega^{(N)}$
can be endowed with a spline manifold structure as defined, e.g.,  in
\cite{groisser2015matched,sangalli2015unstructured,mourrain2015geometrically}.
In the framework of \cite{sangalli2015unstructured}, each pair of adjacent subsets $
\Omega^{(i)}, \; \Omega^{(j)}$   is naturally  associated with a chart  $[-1,1]\times[0,1]=
        \widehat\Omega^{(L)} \cup  \widehat\Omega^{(R)}$ through  the
        maps $ \f F^{(L)}$ and $\f F^{(R)}$. \end{remark}
\begin{definition}[Isogeometric spaces]\label{defi:V0-V1}
  The  isogeometric space corresponding to $\mathcal{S}^{p}_{r}$ and $\Omega$ is given as 
  \begin{equation}
    \label{eq:V}
    \mathcal{V} = \left \{ \phi:\Omega \rightarrow
      \mathbb{R}\text{ such that } \phi \circ \f F^{(i)} \in
  \mathcal{S}^{p}_{r} (\hat \Omega) , i=1,\ldots,N \right \}. 
  \end{equation}
Furthermore we have
 \begin{equation} \label{eq:V0}
 \mathcal{V}^{0}=\mathcal{V}\cap C^0(\Omega),
  \end{equation}
and
 \begin{equation} \label{eq:V1}
 \mathcal{V}^{1}=\mathcal{V}\cap C^1(\Omega).
  \end{equation}
\end{definition}

The graph  $\Sigma \subset \Omega \times \RR$ of an isogeometric function  $\phi:\Omega \rightarrow
\mathbb{R}$ is naturally split into patches $\Sigma^{i}$ having the parametrizations 
\begin{equation}\label{eq:isogeom-function-parametrization}
  \left [
    \begin{array}{c}
      \f F^{(i)}\\ g^{(i)} 
    \end{array}
\right ]\, : \, [ 0,1]\times[0,1]= \widehat\Omega\rightarrow \Sigma^{(i)}
\end{equation}
where $g^{(i)} =\phi \circ \f F^{(i)} $. 

In order to analyze  the smoothness of an isogeometric function  along one
interface  $\Gamma = \Gamma^{(i,j)} =  \Omega^{(i)} \cap 
\Omega^{(j)}$, we introduce 
\begin{equation}
  \label{eq:Fg-left-and-right}
  \begin{aligned}
     \left [
    \begin{array}{c}
      \f F^{(L)}\\ g^{(L)} 
    \end{array}
\right ]&: [-1,0]\times[0,1]=
\widehat\Omega^{(L)}\rightarrow\Sigma^{(i)}=\Sigma^{(L)} , \\
 \left [
    \begin{array}{c}
      \f F^{(R)}\\ g^{(R)} 
    \end{array}
\right ]&: [0,1]\times[0,1]=  \widehat\Omega^{(R)}\rightarrow \Sigma^{(j)}=\Sigma^{(R)} , 
  \end{aligned}
\end{equation}
where $ g^{(L)} $, $ g^{(R)} $ are defined obviously  as  extensions of
\eqref{eq:F-left-and-right}, see Figure
\ref{fig:Fg-left-and-right}. Continuity of $\phi$ is implied by the
continuity of the graph parametrization, which we assume and set to
\begin{equation}
  \label{eq:g-continuity}
   g_0(v) = g^{(L)}(0,v) = g^{(R)}(0,v),
\end{equation}
for all $v\in[0,1]$, analogous to \eqref{eq:Gamma}.

\begin{figure}[h]
  \centering
  \includegraphics[trim=0  250 0 50, clip,width=\textwidth]{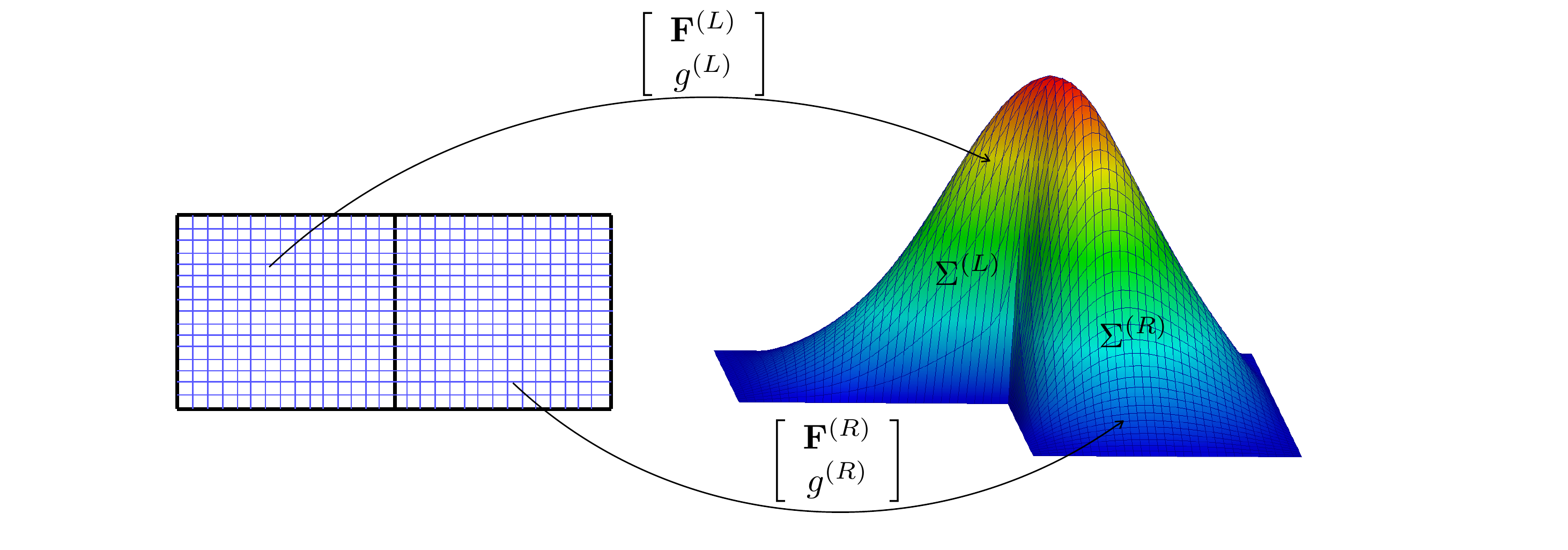}
 \caption{Example of the general setting of \eqref{eq:Fg-left-and-right}.}\label{fig:Fg-left-and-right}
\end{figure}

\section{$C^1$ isogeometric spaces}
\label{sec:C1-isogeometric-spaces}
If an isogeometric function is $C^1$ within each patch (condition
\eqref{eq:r-geq-1}), with non-singular parametrization (condition
\eqref{eq:non-singular-F}) and is globally continuous (condition
\eqref{eq:g-continuity}), then it is globally $C^1$ if and only if 
 there exists a  well defined  tangent plane  at 
each point of the interfaces $\Sigma^{(i)} \cap \Sigma^{(j)} $. 
Focusing on one interface, with notation \eqref{eq:Fg-left-and-right},
the tangent planes from the left and right sides are formed by the two
pairs of vectors
\begin{displaymath}
  \left [
    \begin{array}{c}
     D_u \f F^{(L)}(0,v) \\ D_u g^{(L)} (0,v) 
    \end{array}
\right ] \text{and} \left [
    \begin{array}{c}
      D_v  \f F_{0}(v) \\ D_v  g_{0}(v)
    \end{array}
\right ] \text{ as well as } \left [
    \begin{array}{c}
       D_u  \f F^{(R)}(0,v) \\    D_u g^{(R)} (0,v) 
    \end{array}
\right ] \text{and} \left [
    \begin{array}{c}
      D_v  \f F_{0}(v) \\ D_v  g_{0}(v)
    \end{array}
\right ] ,
\end{displaymath}
respectively. 
These are three different vectors (the vector tangent to $\Sigma^{(i)} \cap
\Sigma^{(j)} $ is in common) that form a unique tangent plane,
i.e. they are coplanar, if and only if they are  linearly dependent. In other words  the
isogeometric function $\phi$ is $C^1 $ on $\Omega^{(i)} \cup \Omega^{(j)} $
if and only if
\begin{equation}\label{eq:G1-for-F-det}
	\det \left[ \begin{array}{lll}
	D_u \f F^{(L)} (0,v) & D_u \f F^{(R)} (0,v) &  D_v  \f F_{0}(v)\\
	D_u g^{(L)} (0,v) & D_u g^{(R)} (0,v) &  D_v  g_{0}(v) 
	\end{array}
	 \right] = 0
\end{equation}
for all $v \in [0,1]$. In the  context of isogeometric methods,  the domain $\Omega$ and its
parametrization are  given at the first stage, then
\eqref{eq:G1-for-F-det} is  the condition on the
isogeometric function in parametric coordinates (i.e., $g^{(L)} $
and $g^{(R)}$)    that gives $C^1$ continuity of the isogeometric
function in physical coordinates.  

In CAGD literature, condition \eqref{eq:G1-for-F-det} is named \emph{geometric continuity
  of order $1$}, in short $G^1$,  and is commonly stated  as in the following Definition (see,
e.g., \cite{peters1991smooth,liu1989gc,beeker1986smoothing}).  

\begin{definition}[$G^1$-continuity at  $\Sigma^{(i)} \cap
  \Sigma^{(j)} $]\label{def:G1} Given the parametrizations $\f F^{(L)} $,  $\f F^{(R)}$,  $ g^{(L)} $, $ g^{(R)} $ as in 
\eqref{eq:F-left-and-right}, \eqref{eq:Fg-left-and-right}, fulfilling
\eqref{eq:r-geq-1},  \eqref{eq:non-singular-F} and
\eqref{eq:g-continuity},  we say that the graph parametrization is
$G^1$  at the interface $\Sigma^{(i)} \cap \Sigma^{(j)} $    if there exist $\alpha^{(L)}  : [0,1]
 \rightarrow \RR $, $ \alpha^{(R)} : [0,1] \rightarrow \RR $ and $\beta  : [0,1]
 \rightarrow \RR $ such that for all $  v \in [0,1]$,
 \begin{equation}
   \label{eq:alpha-sign-condition}
 \alpha^{(L)}  (v) \alpha^{(R)} (v) > 0
 \end{equation}
and 
 \begin{equation}
   \label{eq:G1-for-Fg-alpha-beta-gamma}
  	\alpha^{(R)} (v)  \left [
    \begin{array}{c}
     D_u \f F^{(L)}(0,v) \\ D_u g^{(L)} (0,v) 
    \end{array}
\right ]
-
   \alpha^{(L)}(v) 
\left [
    \begin{array}{c}
       D_u  \f F^{(R)}(0,v) \\    D_u g^{(R)} (0,v) 
    \end{array}
\right ]
+ \beta (v)
\left [
    \begin{array}{c}
      D_v  \f F_{0}(v) \\ D_v  g_{0}(v)
    \end{array}
\right ]
=\boldsymbol{0}.
 \end{equation}
\end{definition}
The sign condition \eqref{eq:alpha-sign-condition} on $\alpha^{(L)}$ and $\alpha^{(R)}$  forbids, on
general surfaces, the presence of cusps. However, for the graph of a function it
is obvious.  

For our study it is useful to express $G^1$ continuity as in
Definition \ref{def:G1} since  the coefficients $\alpha^{(L)}$, $\alpha^{(R)}$ and
$\beta$  play an important role.   Since the first two equations
of \eqref{eq:G1-for-Fg-alpha-beta-gamma} are linearly independent,
$\alpha^{(L)}$, $\alpha^{(R)}$ and $\beta$   are uniquely determined,
up to a common multiplicative
factor,  by  $\f F^{(L)} $ and  $\f F^{(R)}$, i.e. from the equation 
\begin{equation}
   \label{eq:G1-for-F-alone-alpha-beta-gamma}
  	\alpha^{(R)} (v)    D_u \f F^{(L)}(0,v) -   \alpha^{(L)}(v)
        D_u  \f F^{(R)}(0,v) + \beta (v) D_v  \f F_{0}(v) 
=\boldsymbol{0}.
\end{equation}
 Precisely, we have the following proposition which  can also be found in \cite{peters1990,peters-PhD,peters-handbook}. 
\begin{proposition}\label{prop:alpha-beta-gamma-are-splines}
  Given any   $\f F^{(L)} $,  $\f F^{(R)}$ in the setting of Section
  \ref{preliminaries},   then
  \eqref{eq:G1-for-F-alone-alpha-beta-gamma} holds if
and only if $\alpha^{(S)} (v)  = \gamma(v) 	\bar \alpha^{(S)} (v)
$,  for $S \in \{L,R\}$,  and  $\beta (v)  = \gamma(v) 	\bar \beta
(v) $, where 
\begin{equation}\label{eq:alphas-wrt-F}
     	\bar   	\alpha^{(S)} (v)=  \det \left[ \begin{array}{ll}
	 D_u \f F^{(S)} (0,v) & D_v  \f F_{0}(v) 
	\end{array}	 \right],
  \end{equation}

 \begin{equation}\label{eq:beta-wrt-F}
   	\bar  	\beta (v) =  	\det \left[ \begin{array}{ll}
	 D_u \f F^{(L)} (0,v)& 	 D_u \f F^{(R)} (0,v) 
	\end{array}
	 \right],
  \end{equation}
and  $\gamma:[0,1]\rightarrow \RR $ is any  scalar function. In addition, $\gamma(v) \neq 0$ if
and only if \eqref{eq:alpha-sign-condition} holds. Moreover, there exist functions $\beta^{(S)}(v)$, for 
$S\in \{L,R\}$, such that 
\begin{equation}
  \label{eq:beta_function_of_alpha}
  \beta (v)= \alpha^{(L)} (v) \beta^{(R)}(v)- \alpha^{(R)} (v)\beta^{(L)}(v).
\end{equation}
\end{proposition}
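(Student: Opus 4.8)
The plan is to extract the coefficients $\alpha^{(S)}$ and $\beta$ directly from \eqref{eq:G1-for-F-alone-alpha-beta-gamma} by solving a small linear system, using the non-singularity hypothesis \eqref{eq:non-singular-F}. First I would fix $v$ and regard \eqref{eq:G1-for-F-alone-alpha-beta-gamma} as a single vector equation in $\RR^2$ with three scalar unknowns $\alpha^{(L)}(v),\alpha^{(R)}(v),\beta(v)$. Because \eqref{eq:non-singular-F} forces $D_u\f F^{(S)}(0,v)$ and $D_v\f F_0(v)$ to be linearly independent for each $S\in\{L,R\}$, the two vectors $D_u\f F^{(L)}(0,v)$ and $D_v\f F_0(v)$ already span $\RR^2$; hence the solution space is one-dimensional, which is the "unique up to a common multiplicative factor" claim. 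To pin down the solution I would take determinants: pair equation \eqref{eq:G1-for-F-alone-alpha-beta-gamma} against each of the three vectors $D_u\f F^{(L)}(0,v)$, $D_u\f F^{(R)}(0,v)$, $D_v\f F_0(v)$ in turn (i.e. form the $2\times 2$ determinant with that vector as the second column). Pairing against $D_v\f F_0(v)$ kills the $\beta$ term and gives $\alpha^{(R)}(v)\det[D_u\f F^{(L)}(0,v)\ D_v\f F_0(v)]=\alpha^{(L)}(v)\det[D_u\f F^{(R)}(0,v)\ D_v\f F_0(v)]$, i.e. $\alpha^{(R)}\bar\alpha^{(L)}=\alpha^{(L)}\bar\alpha^{(R)}$ in the notation of \eqref{eq:alphas-wrt-F}; pairing against $D_u\f F^{(L)}$ or $D_u\f F^{(R)}$ similarly relates $\beta$ to $\bar\beta$ of \eqref{eq:beta-wrt-F}. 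From these three proportionalities one reads off that $(\alpha^{(L)},\alpha^{(R)},\beta)=\gamma\cdot(\bar\alpha^{(L)},\bar\alpha^{(R)},\bar\beta)$ for a scalar $\gamma(v)$, and conversely a direct substitution shows $(\bar\alpha^{(L)},\bar\alpha^{(R)},\bar\beta)$ itself solves \eqref{eq:G1-for-F-alone-alpha-beta-gamma} — this is Cramer's rule, since $\bar\alpha^{(S)}$ and $\bar\beta$ are exactly the relevant $2\times 2$ minors.

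Next I would address the sign statement. By \eqref{eq:non-singular-F}, $\bar\alpha^{(L)}(v)$ and $\bar\alpha^{(R)}(v)$ are nowhere zero, so $\bar\alpha^{(L)}(v)\bar\alpha^{(R)}(v)$ has constant sign on $[0,1]$; moreover, because $\widehat\Omega^{(L)}$ and $\widehat\Omega^{(R)}$ lie on opposite sides of $u=0$, the two derivatives $D_u\f F^{(L)}$ and $D_u\f F^{(R)}$ point to the same side relative to the interface tangent $D_v\f F_0$, which makes $\bar\alpha^{(L)}$ and $\bar\alpha^{(R)}$ have \emph{opposite} signs; in any case the product $\alpha^{(L)}(v)\alpha^{(R)}(v)=\gamma(v)^2\,\bar\alpha^{(L)}(v)\bar\alpha^{(R)}(v)$, so condition \eqref{eq:alpha-sign-condition} holds iff $\gamma(v)\neq 0$ (the constant sign of the $\bar\alpha$-product being absorbed by a global choice). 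For a graph parametrization this is automatic, as the text already notes.

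Finally, for \eqref{eq:beta_function_of_alpha} I would exhibit $\beta^{(L)},\beta^{(R)}$ explicitly. Writing $D_u\f F^{(L)}(0,v)$ in the basis $\{D_u\f F^{(R)}(0,v),\,D_v\f F_0(v)\}$ — legitimate by \eqref{eq:non-singular-F} — one gets $D_u\f F^{(L)}=\lambda\,D_u\f F^{(R)}+\mu\,D_v\f F_0$ with $\lambda=\bar\beta/\bar\alpha^{(R)}$ and $\mu=-\bar\alpha^{(L)}/\bar\alpha^{(R)}$ by the determinant formulas above; comparing with \eqref{eq:G1-for-F-alone-alpha-beta-gamma} and rescaling suggests the ansatz $\bar\beta=\bar\alpha^{(L)}\beta^{(R)}-\bar\alpha^{(R)}\beta^{(L)}$, which one can satisfy by choosing, e.g., $\beta^{(S)}(v)=\det[D_u\f F^{(S)}(0,v)\ D_u\f F_0'(?)]$-type expressions, or more simply by noting that the bilinear-in-$\f F^{(L)},\f F^{(R)}$ quantity $\bar\beta$ is antisymmetric and decomposes through any fixed reference vector; multiplying by $\gamma$ converts the bars to the unbarred quantities and yields \eqref{eq:beta_function_of_alpha}. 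The main obstacle I anticipate is not the algebra but keeping the orientation/sign bookkeeping consistent between the left chart $[-1,0]\times[0,1]$ and the right chart $[0,1]\times[0,1]$ — in particular making sure the claimed sign of $\alpha^{(L)}\alpha^{(R)}$ and the chosen normalization of $\gamma$ are compatible, and that the splitting in \eqref{eq:beta_function_of_alpha} is stated with $\beta^{(S)}$ genuinely depending only on data near patch $S$.
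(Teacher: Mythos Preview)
Your approach is essentially the paper's: pairing \eqref{eq:G1-for-F-alone-alpha-beta-gamma} against each of the three column vectors and reading off $2\times 2$ minors is exactly the Laplace expansion (along the third row) of the vanishing $3\times 3$ determinant that the paper writes down, so the identification of $(\bar\alpha^{(L)},\bar\alpha^{(R)},\bar\beta)$ as cofactors is the same argument.

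Two small corrections. First, your sign bookkeeping slips: with the chart convention $\widehat\Omega^{(L)}=[-1,0]\times[0,1]$ and $\widehat\Omega^{(R)}=[0,1]\times[0,1]$, both $D_u\f F^{(L)}(0,v)$ and $D_u\f F^{(R)}(0,v)$ point to the \emph{same} side of $D_v\f F_0(v)$, so $\bar\alpha^{(L)}$ and $\bar\alpha^{(R)}$ have the \emph{same} (not opposite) sign; then indeed $\alpha^{(L)}\alpha^{(R)}=\gamma^2\,\bar\alpha^{(L)}\bar\alpha^{(R)}>0$ iff $\gamma\neq 0$. Second, you overwork \eqref{eq:beta_function_of_alpha}: mere existence is trivial once $\alpha^{(L)},\alpha^{(R)}$ are nowhere zero (e.g.\ $\beta^{(L)}=0$, $\beta^{(R)}=\beta/\alpha^{(L)}$), and the paper says as much. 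What the paper actually contributes beyond your outline is a \emph{specific} choice,
\[
\beta^{(S)}(v)=\frac{D_u\f F^{(S)}(0,v)\cdot D_v\f F_0(v)}{\|D_v\f F_0(v)\|^2},
\]
obtained from the identity $\det[\bv^{(L)}\ \bv^{(R)}]=\det[\bv^{(L)}\ \bnu]\det[\bv^{(R)}\ \btau]-\det[\bv^{(L)}\ \btau]\det[\bv^{(R)}\ \bnu]$ with $\btau=D_v\f F_0/\|D_v\f F_0\|^2$ and $\bnu$ its $90^\circ$ rotation. This explicit $\beta^{(S)}$ depends only on patch-$S$ data and is precisely what is used downstream in the proof of Proposition~\ref{teo:C1=G1-graph-parametrization}; your ``decompose through a fixed reference vector'' remark is the right intuition, but you should make it concrete rather than leave the question mark.
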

\begin{proof}
Obviously \eqref{eq:G1-for-F-alone-alpha-beta-gamma}  determines $\alpha^{(L)}(v)$, $\alpha^{(R)}(v)$ and
$\beta(v)$ up to a common factor
$\gamma(v)$ and, since  the two vectors  $D_u \f F^{(L)}
(0,v) $ and $ D_v  \f F_{0}(v)  $ are linearly independent because of 
(\ref{eq:non-singular-F}),  $\alpha^{(L)}(v)$, $\alpha^{(R)}(v)$ and
$\beta(v)$ are uniquely determined up to a common factor
$\gamma(v)$ by \eqref{eq:G1-for-F-alone-alpha-beta-gamma}. 
 We have 
  \begin{equation}\label{eq:1}
    	\det \left[ \begin{array}{lll}
	D_u \f F^{(L)} (0,v) & D_u \f F^{(R)} (0,v)&  D_v  \f F_{0}(v)\\
 	D_u \f F^{(L)} (0,v) \cdot\f e_k& D_u \f F^{(R)} (0,v)  \cdot\f e_k&  D_v  \f F_{0}(v) 
         \cdot\f e_k
	\end{array}
	 \right] = 0,
  \end{equation}
for all $k=1,2$, $ \f e_k$ being the canonical base vectors in
$\RR^2$. Using the Laplace expansion of the above determinant (along the third row) we end up
with 
\begin{displaymath}
  \bar  	\alpha^{(R)} (v) D_u \f F^{(L)} (0,v) -
  \bar   \alpha^{(L)}(v) D_u \f F^{(R)} (0,v) +\bar   \beta (v)  D_v  \f F_{0}(v) =0,
\end{displaymath}
where $\bar  \alpha^{(L)}$, $\bar  \alpha^{(R)}$ and $\bar  \beta$ are as in
\eqref{eq:alphas-wrt-F} and \eqref{eq:beta-wrt-F}. In a similar way \eqref{eq:G1-for-F-det}
yields
\begin{displaymath}
  \bar  	\alpha^{(R)} (v) D_u g^{(L)} (0,v) -
   \bar  \alpha^{(L)}(v) D_u g^{(R)} (0,v) + \bar  \beta (v)  D_v  g_{0}(v) =0.
\end{displaymath}

Setting
\begin{displaymath}
  \bnu (v)  =  \left[ \begin{array}{c}
 D_v  \f F_{0}(v) \cdot\f e_2  \\-  D_v  \f F_{0}(v) \cdot\f e_1
	\end{array}  \right] , \qquad  \btau (v)  =\frac{ D_v  \f F_{0}(v)    }{\| D_v  \f F_{0}(v)  \|^2},
\end{displaymath}
we have
\begin{displaymath}
\det \left[ \begin{array}{lr} \bnu(v) &  \btau (v)	\end{array}
	 \right]= 1\quad  \text{ and } \quad \bnu(v) \cdot  \btau (v) = 0.
\end{displaymath}
 The existence of $\beta^{(S)}(v)$,  
$S\in \{L,R\}$, such that \eqref{eq:beta_function_of_alpha} holds is obvious, because of
\eqref{eq:alpha-sign-condition}. Obviously, $\beta^{(S)}(v)$, for 
$S\in \{L,R\}$, are not unique. A specific choice,  which is of
interest, is the following 
\begin{equation}\label{eq:betaS}
  \beta^{(S)}(v) = \frac{D_u \f F^{(S)} (0,v)  \cdot  D_v  \f F_{0}(v) }{\| D_v  \f F_{0}(v)   \|^2}.
\end{equation}
 Indeed, using the expansion
\begin{displaymath}
  \forall \bv \in \RR^2, \quad  \bv =  \det \left[ \begin{array}{lr} \bv &  \btau (v)	\end{array}
  \right] \bnu (v) -  \det \left[ \begin{array}{lr} \bv &  \bnu (v)	\end{array}
  \right] \btau (v), 
\end{displaymath}
and then
\begin{displaymath}
  \begin{aligned}
  \det \left[ \begin{array}{lr}  \bv^{(L)} & \bv^{(R)} 	\end{array}
  \right]  &=  \det \left[ \begin{array}{lr} \bv^{(L)} &  \bnu (v)	\end{array}
  \right]
  \det \left[ \begin{array}{lr} \bv^{(R)} &  \btau (v)	\end{array}
  \right] \\ & \quad - \det \left[ \begin{array}{lr} \bv^{(L)} &  \btau (v)	\end{array}
  \right]
  \det \left[ \begin{array}{lr} \bv^{(R)} &  \bnu (v)	\end{array}
  \right] , 
  \end{aligned}
\end{displaymath}
gives (\ref{eq:beta_function_of_alpha}) and \eqref{eq:betaS}  
by choosing $  \bv^{(S)} = D_u \f F^{(S)} (0,v) $, for $ S\in \{L,R\}$.
\end{proof}

\begin{remark}
  If \eqref{eq:G1-for-Fg-alpha-beta-gamma} holds, then there exist coefficients $\alpha^{(S)}
  \in \mathcal{S}^{2p-1}_{r-1} ([0,1])$, for $S\in \{L,R\}$, and
  $\beta \in  \mathcal{S}^{2p}_{r} ([0,1])$. Indeed, this follows from
  Proposition \ref{prop:alpha-beta-gamma-are-splines} selecting   $\gamma=1$.
   See also \cite{peters-handbook}. 
\end{remark}

Summarizing,  in the  context of isogeometric methods we consider  $\Omega$ and its
parametrization given.  Then for each interface $\alpha^{(L)}$,
$\alpha^{(R)}$ and $\beta$ are determined from
\eqref{eq:G1-for-F-alone-alpha-beta-gamma} as stated  in Proposition
\ref{prop:alpha-beta-gamma-are-splines}.  It should be observed 
that for planar domains, there always  exist $\alpha^{(L)}$,
$\alpha^{(R)}$ and $\beta$ fulfilling
\eqref{eq:G1-for-F-alone-alpha-beta-gamma}. This is not the case for
surfaces, see Section \ref{surfaces}. Then,  the $C^1$ continuity
of isogeometric functions  is equivalent to the last equation in
\eqref{eq:G1-for-Fg-alpha-beta-gamma}, that is 
\begin{equation}\label{eq:G1-condition-for-g}
    \alpha^{(R)} (v) D_u g^{(L)} (0,v) -
   \alpha^{(L)}(v) D_u g^{(R)} (0,v) + \beta (v)  D_v  g_{0}(v) =0
\end{equation}
for all $v \in [0,1]$. 

We end this section by  a statement of  the equivalence between $C^1$
continuity of the isogeometric function and $G^1$ continuity of its graph 
parametrization. It 
is formalized and presented in its most general form for arbitrary continuity and dimension
in \cite{groisser2015matched}. The use of $G^1$ continuous functions over unstructured mesh partitions is well known in the isogeometric
community, see e.g. \cite{kiendl-bazilevs-hsu-wuechner-bletzinger-10,scottPhD,scott2013isogeometric,kapl-vitrih-juttler-birner-15,mourrain2015geometrically,bercovier2014smooth}.   We give a 
detailed proof of the statement here, in the framework of Proposition \ref{prop:alpha-beta-gamma-are-splines}, 
since this will serve for the next steps of Section \ref{two-patch}. 

\begin{proposition}\label{teo:C1=G1-graph-parametrization}
An isogeometric function $\phi \in \mathcal{V}$ belongs to $
  \mathcal{V}^1$ if and only if 
its graph $\Sigma$ is $G^1$ continuous  on each interface $\Sigma^{(i)} \cap
  \Sigma^{(j)} $.
\end{proposition}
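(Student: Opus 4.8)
The plan is to show the equivalence interface by interface, reducing everything to the single-interface analysis already set up in \eqref{eq:Fg-left-and-right}--\eqref{eq:G1-condition-for-g}. First I would observe that, by Definition \ref{defi:V0-V1}, an isogeometric function $\phi$ lies in $\mathcal{V}^1$ iff $\phi \in C^1(\Omega)$; since each $\f F^{(i)}$ is $C^r$ with $r \geq 1$ and non-singular (conditions \eqref{eq:r-geq-1} and \eqref{eq:non-singular-F}), $\phi$ is automatically $C^1$ (in fact $C^{\min(r,p)}$) in the interior of each patch $\Omega^{(i)}$, and $C^0$ across every interface by \eqref{eq:g-continuity}. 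Hence the only thing to check is $C^1$-smoothness of $\phi$ across each interface $\Gamma^{(i,j)}$, and $\phi \in \mathcal{V}^1$ iff this holds simultaneously at all interfaces. So it suffices to fix one interface, use the left/right notation of \eqref{eq:Fg-left-and-right}, and prove that $\phi$ is $C^1$ on $\Omega^{(i)} \cup \Omega^{(j)}$ iff the graph $\Sigma$ is $G^1$ at $\Sigma^{(i)} \cap \Sigma^{(j)}$ in the sense of Definition \ref{def:G1}.

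Next I would argue the geometric heart of the matter, which is essentially the discussion preceding \eqref{eq:G1-for-F-det} made rigorous. Because $\phi$ is continuous and smooth up to the boundary on each side, $\phi$ is $C^1$ across $\Gamma$ iff the one-sided gradients agree along $\Gamma$, equivalently iff the graph $\Sigma$ admits a well-defined tangent plane at every point of $\Sigma^{(i)} \cap \Sigma^{(j)}$. The tangent space to $\Sigma^{(L)}$ along the interface is spanned by the two columns $[D_u \f F^{(L)}(0,v);\, D_u g^{(L)}(0,v)]$ and $[D_v \f F_0(v);\, D_v g_0(v)]$, and similarly for $\Sigma^{(R)}$; the second vector is shared. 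These two tangent planes in $\RR^3$ coincide iff the three vectors are linearly dependent, i.e. iff \eqref{eq:G1-for-F-det} holds for all $v$. I would then translate linear dependence of these three vectors-in-$\RR^3$, two of which (the $u$-derivatives of $\f F^{(L)}, \f F^{(R)}$) already span a $2$-plane together with $D_v\f F_0$ by \eqref{eq:non-singular-F}, into the existence of scalar functions $\alpha^{(L)}, \alpha^{(R)}, \beta$ satisfying \eqref{eq:G1-for-Fg-alpha-beta-gamma}: the relation among the first two rows is exactly \eqref{eq:G1-for-F-alone-alpha-beta-gamma}, which by Proposition \ref{prop:alpha-beta-gamma-are-splines} forces $(\alpha^{(L)},\alpha^{(R)},\beta) = \gamma(v)(\bar\alpha^{(L)},\bar\alpha^{(R)},\bar\beta)$, and then the third-row equation is precisely \eqref{eq:G1-condition-for-g}. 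For the sign condition \eqref{eq:alpha-sign-condition} I would invoke the remark after Definition \ref{def:G1}: for a graph there are no cusps, and one can always take $\gamma \equiv 1$, for which $\bar\alpha^{(L)}(v)\bar\alpha^{(R)}(v) > 0$ follows from non-singularity of the two patch maps on opposite sides of $\Gamma$ (the two determinants in \eqref{eq:alphas-wrt-F} have opposite signs as oriented areas, but the orientation conventions of $\widehat\Omega^{(L)}$ versus $\widehat\Omega^{(R)}$ flip one of them back), so their product is positive; I should state this orientation bookkeeping carefully.

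For the converse direction, if $\Sigma$ is $G^1$ at the interface then \eqref{eq:G1-for-Fg-alpha-beta-gamma} holds with $\alpha^{(L)}\alpha^{(R)} > 0$; reading off the third row gives \eqref{eq:G1-condition-for-g}, which combined with the already-known $C^0$ matching \eqref{eq:g-continuity} and the linear independence of $D_u\f F^{(L)}(0,v)$ and $D_v\f F_0(v)$ lets me solve for $D_u g^{(L)}(0,v)$ uniquely in terms of the interface data, and symmetrically for the right side, so the full one-sided Jacobians of $\phi$ in physical coordinates coincide along $\Gamma$; hence $\phi \in C^1$ across $\Gamma$. Since this argument is uniform over all interfaces, $\phi \in \mathcal{V}^1$. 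The main obstacle I anticipate is not the linear algebra but the sign/orientation point in \eqref{eq:alpha-sign-condition}: one must be scrupulous about how the affine reparametrizations $(\f F^{(L)})^{-1}\circ\f F^{(i)}$ and $(\f F^{(R)})^{-1}\circ\f F^{(j)}$ in \eqref{eq:F-left-and-right} fix the orientations of $\widehat\Omega^{(L)}$ and $\widehat\Omega^{(R)}$, and to check that with those conventions the two Jacobian determinants have consistent signs so that $\bar\alpha^{(L)}\bar\alpha^{(R)} > 0$ rather than $< 0$; everything else is a direct consequence of Proposition \ref{prop:alpha-beta-gamma-are-splines} and the tangent-plane characterization already spelled out in the text.
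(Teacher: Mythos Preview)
Your argument is correct and essentially follows the tangent-plane discussion preceding \eqref{eq:G1-for-F-det}: you reduce to one interface, identify $C^1$-ness of $\phi$ with coincidence of the two tangent planes to the graph, express this as vanishing of the $3\times 3$ determinant, and then use Proposition~\ref{prop:alpha-beta-gamma-are-splines} to produce the coefficients $(\alpha^{(L)},\alpha^{(R)},\beta)$ and check the third row. The sign issue you flag is real but minor, and you treat it adequately.

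The paper, however, deliberately takes a different and more constructive route. Rather than arguing via the determinant \eqref{eq:G1-for-F-det}, it introduces an explicit transversal vector field $\bn$ on $\Gamma$ (equation \eqref{eq:bn}), proves that the left and right definitions of $\bn$ coincide, and then computes the one-sided directional derivative $\nabla^{(S)}\phi\cdot\bn$ in closed form, obtaining formula \eqref{eq:phi-normal-derivative-S}. Equality of these two derivatives is then shown to be algebraically equivalent to \eqref{eq:G1-condition-for-g}. The payoff of this longer calculation is that both the vector $\bn$ and the identity \eqref{eq:phi-normal-derivative-S} are reused immediately afterwards: $\bn$ is the transversal direction defining the trace space $\mathcal{V}^1_\Gamma$ in \eqref{eq:V1-Gamma}--\eqref{eq:V1-Gamma-pullback}, and \eqref{eq:phi-normal-derivative-S} is what yields the key expansion \eqref{eq:equivalence-phi-g} on which Theorems~\ref{thm:trace-for-AS-G1}--\ref{thm:trace-for-general-parametrization} rest. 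Your proof establishes the proposition cleanly but does not produce these objects; if you adopt your route, you would need to derive $\bn$ and \eqref{eq:phi-normal-derivative-S} separately before entering Section~\ref{two-patch}.
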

\begin{proof}
   Consider a graph interface $\Sigma^{(i)} \cap \Sigma^{(j)} $  and
  the corresponding  $\Gamma = \Gamma^{(i,j)} =  \Omega^{(i)}
  \cap  \Omega^{(j)}$.   Let  $  \alpha^{(S)}(v)$, $  \beta^{(S)}(v)$,
  for $S\in\{L,R\}$  such that
  \eqref{eq:G1-for-F-alone-alpha-beta-gamma} holds.  Define the vector
  $  \bn^{(S)} $  on $\Gamma$ such that 
  \begin{equation}\label{eq:bn}
  	\bn^{(S)} \circ \f F_0(v) =
	\left[ \begin{array}{cc}
	 	D_u \f F^{(S)} (0,v)   &   D_v  \f F_{0}(v) 
	\end{array}  \right] \left[ \begin{array}{c}
	 1    \\
	-\beta^{(S)}(v)
	\end{array}  \right] \frac{1} { \alpha^{(S)}(v)}.
  \end{equation}
The vector   $  \bn^{(S)} $ is transversal to  $\Gamma$, i.e. linear
independent to $ D_v  \f F_{0}(v) $,  since
\begin{displaymath}
  \det \left[ \begin{array}{lr}  \bn^{(S)} \circ \f F_0(v)  & D_v  \f F_{0}(v) 	\end{array}
  \right]    = \frac{1} { \alpha^{(S)}(v)} \det
  \left[ \begin{array}{lr}  	D_u \f F^{(S)} (0,v)    & D_v  \f F_{0}(v) 	\end{array}
  \right]   = \frac{1} { \gamma(v)} \neq 0.
  \end{displaymath}
We have $\bn^{(L)}=\bn^{(R)}=\bn$. Indeed, by using
\eqref{eq:G1-for-F-alone-alpha-beta-gamma} and
\eqref{eq:beta_function_of_alpha}, we get
\begin{displaymath}
  \begin{aligned}
    & \alpha^{(L)}(v) \alpha^{(R)}(v)	\bn^{(L)} \circ \f F_0(v) -
     \alpha^{(L)}(v) \alpha^{(R)}(v)	\bn^{(R)} \circ \f F_0(v)  \\
     & \quad = \alpha^{(R)}(v) 	D_u \f F^{(L)} (0,v)  -
     \alpha^{(L)}(v) 	D_u \f F^{(R)} (0,v) + (\alpha^{(L)}(v)
     \beta^{(R)}(v) -  \alpha^{(R)}(v) \beta^{(L)}(v))     D_v  \f F_{0}(v)  \\
     & \quad = \boldsymbol 0.
  \end{aligned}
\end{displaymath}
Futhrermore, $\bn$   is not in
general unitary and it is  continuous, since $\f F_{0}$ is at least
$C^1$ from \eqref{eq:r-geq-1}.
For $S\in\{L,R\}$, let $\phi = g^{(S)} \circ \f [F^{(S)}]^{-1}$ on $\Omega^{(S)}$, then
the derivative from side $S$ in direction $\bn$ fulfills 
\begin{equation}
  \label{eq:phi-grad}
  \begin{aligned}
      \nabla^{(S)} \phi (\bx) \cdot  \bn (\bx ) & = [ 	D_u g^{(S)} (0,v)  \,  D_v  g_{0}(v)]  \left[ \begin{array}{c}
	 	D_u \f F^{(S)} (0,v)   \,   D_v  \f F_{0}(v) 
	\end{array}  \right]^{-1} \cdot  \bn \circ \f F_0(v) ,
  \end{aligned}
\end{equation}
for all $(\bx) \in \Gamma $, 
where here and in what follows we implicitly assume the relation $\f F_{0}
  (v) =(\bx) $.  
 
We obtain directly from the definition of $\bn^{(S)}$ that 
 \begin{equation}
  \label{eq:Finv-grad}
   \left[ \begin{array}{c}
	 	D_u \f F^{(S)} (0,v)   \,   D_v  \f F_{0}(v) 
	\end{array}  \right]^{-1}  \cdot \bn \circ \f F_0(v)  =\frac{1}{ \alpha^{(S)}(v)} \left[ \begin{array}{c}
	 1    \\
	-\beta^{(S)}(v)
	\end{array}  \right] .
\end{equation}

Substituting \eqref{eq:Finv-grad} back to \eqref{eq:phi-grad} we then obtain 
\begin{equation}
  \label{eq:phi-normal-derivative-S}
  \begin{aligned}
      \nabla^{(S)} \phi (\bx) \cdot  \bn (\bx )
    & =\frac{D_u g^{(S)} (0,v)   -\beta^{(S)}(v)   D_v  g_{0}(v) }{ \alpha^{(S)}(v)}
  \end{aligned}
\end{equation}
for $S \in \{R,L\}$. Therefore  $   \nabla^{(L)} \phi (\bx) \cdot  \bn
(\bx ) =    \nabla^{(R)} \phi (\bx) \cdot  \bn (\bx )$  if and only if (by \eqref{eq:phi-normal-derivative-S})
\begin{displaymath}
  \frac{D_u g^{(L)} (0,v)   -\beta^{(L)}(v)   D_v  g_{0}(v) 
    }{ \alpha^{(L)}(v)} = \frac{D_u g^{(R)} (0,v)   -\beta^{(R)}(v)   D_v  g_{0}(v) }{ \alpha^{(R)}(v)}.
\end{displaymath}
That, after multiplying both sides by $ \alpha^{(L)}(v)
\alpha^{(R)}(v)  $ and  using 
\eqref{eq:beta_function_of_alpha}, is equivalent to \eqref{eq:G1-condition-for-g}. 
\end{proof}

\begin{remark}\label{rem:C1-isoparametric-on-G1-surfaces}
 As a consequence of Proposition~\ref{teo:C1=G1-graph-parametrization},
 $C^1$ isogeometric spaces over $C^0$ planar multi-patch
 parametrizations fit in the  isoparametric framework. 
\end{remark}

\section{Analysis-suitable $G^1$ parametrizations}
\label{AS-G1-parametrization}

At each interface  $\Gamma = \Gamma^{(i,j)} =  \Omega^{(i)} \cap 
\Omega^{(j)}$, given  any regular and orientation preserving  $\f F^{(L)} $,  $\f F^{(R)}$,  as in 
  \eqref{eq:F-left-and-right},  there
  exist coefficients $\alpha^{(L)}$,
$\alpha^{(R)}$ and $\beta$, with    $\alpha^{(L)}  (v) \alpha^{(R)} (v) > 0$,
 such that \eqref{eq:G1-for-F-alone-alpha-beta-gamma}  holds. 
 Then \eqref{eq:G1-condition-for-g} expresses $C^1$ continuity  of
 isogeometric functions  in terms of $\alpha^{(L)}$,
$\alpha^{(R)}$ and $\beta$. Optimal approximation properties  of the
isogeometric space on $\Omega$ hold under  restrictions on $\alpha^{(L)}$,
$\alpha^{(R)}$ and $\beta$, i.e.  on the geometry
parametrization. This leads to the definition below.

\begin{definition}[Analysis-suitable $G^1$-continuity] \label{def:AS-G1}   $\f F^{(L)} $ and $\f F^{(R)}$ are \emph{analysis-suitable $G^1$-continuous}  at the interface $ \Gamma$ (in short,
  AS $G^1(\Gamma)$ or  AS $G^1$)  if there exist  $\alpha^{(L)} , \alpha^{(R)}, \beta^{(L)}, \beta^{(R)}\in \mathcal{P}^{1}
  ([0,1]) $  such that  \eqref{eq:G1-for-F-alone-alpha-beta-gamma} and
  (\ref{eq:beta_function_of_alpha}) hold, that is for all $v \in [0,1]$
  \begin{displaymath}
    	\alpha^{(R)} (v)    D_u \f F^{(L)}(0,v) -   \alpha^{(L)}(v)
        D_u  \f F^{(R)}(0,v) = (\alpha^{(R)} (v) \beta^{(L)}(v)- \alpha^{(L)} (v)\beta^{(R)}(v)) D_v  \f F_{0}(v).
  \end{displaymath}
\end{definition}
The degrees of the functions $\alpha^{(L)}$, $\alpha^{(R)}$ and $\beta$ were also studied in the 
context of $G^1$ interpolation of a mesh of curves in \cite{peters1990,peters1991smooth}. 
There, the same degrees where derived for interpolations using cubic patches.

\begin{remark}\label{rem:C1-isoparametric-on-ASG1-surfaces}
 As in Remark \ref{rem:C1-isoparametric-on-G1-surfaces}, 
 we observe that $C^1$ isogeometric spaces over  AS $G^1$  multi-patch
 parametrizations fit in the  isoparametric framework. 
\end{remark}

The class of  \emph{AS} $G^1$ parametrizations contains the
bilinear ones and more
\begin{proposition}\label{prop:bilinear}
 Any  $\f F^{(L)} \in \mathcal{P}^{1} (\Omega^{(L)}) \times \mathcal{P}^{1} (\Omega^{(L)})$ and $\f
 F^{(R)}\in \mathcal{P}^{1} (\Omega^{(R)}) \times \mathcal{P}^{1} (\Omega^{(R)})$ are  AS $G^1$-continuous
 at  $ \Gamma$. Moreover, for any $\alpha^{(L)},\alpha^{(R)} \in \mathcal{P}^{1} 
  ([0,1]) $  strictly
 positive and  $\beta^{(L)}, \beta^{(R)}\in \mathcal{P}^{1}
  ([0,1]) $    there exist   $\f F^{(L)} \in \mathcal{P}^{1} (\Omega^{(L)})$ and $\f
 F^{(R)}\in \mathcal{P}^{1} (\Omega^{(R)})$ fulfilling \eqref{eq:G1-for-F-alone-alpha-beta-gamma}.
\end{proposition}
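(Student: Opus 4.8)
The statement splits into two parts---(i) every bilinear pair $\f F^{(L)},\f F^{(R)}$ is AS $G^1$, and (ii) every admissible datum, i.e.\ linear $\alpha^{(L)},\alpha^{(R)},\beta^{(L)},\beta^{(R)}$ with $\alpha^{(L)},\alpha^{(R)}>0$, is realised by some bilinear pair---and the plan is to settle both by a direct computation with the bilinear coefficients. For (i), write $\f F^{(S)}(u,v)=\f F^{(S)}_{00}+\f F^{(S)}_{10}\,u+\f F^{(S)}_{01}\,v+\f F^{(S)}_{11}\,uv$ for $S\in\{L,R\}$. Agreement at $u=0$ forces $\f F^{(L)}_{00}=\f F^{(R)}_{00}$ and $\f F^{(L)}_{01}=\f F^{(R)}_{01}=:D_v\f F_0$, so $D_v\f F_0$ is a \emph{constant} vector, whereas $D_u\f F^{(S)}(0,v)=\f F^{(S)}_{10}+v\,\f F^{(S)}_{11}$ is affine in $v$. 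Hence $\bar\alpha^{(S)}$ of \eqref{eq:alphas-wrt-F}, the determinant of one affine and one constant column, is affine; and $\beta^{(S)}$ of \eqref{eq:betaS}, namely $(\f F^{(S)}_{10}+v\,\f F^{(S)}_{11})\cdot D_v\f F_0$ divided by the nonzero constant $\|D_v\f F_0\|^2$, is affine as well; both therefore lie in $\mathcal{P}^1([0,1])$. Proposition~\ref{prop:alpha-beta-gamma-are-splines} (with $\gamma\equiv1$) then guarantees that $\bar\alpha^{(L)},\bar\alpha^{(R)},\beta^{(L)},\beta^{(R)}$ satisfy \eqref{eq:G1-for-F-alone-alpha-beta-gamma} and \eqref{eq:beta_function_of_alpha}, which is exactly Definition~\ref{def:AS-G1}. (The corresponding $\beta=\bar\alpha^{(L)}\beta^{(R)}-\bar\alpha^{(R)}\beta^{(L)}$ is merely of degree $\le 2$, which is allowed.)

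For (ii), given the prescribed linear functions I would fix nonzero constant vectors $\f t,\f w$ with $\det[\f w\ \f t]\neq0$ and a base point $\f p_0$, put $\f F_0(v)=\f p_0+v\,\f t$, and define
\begin{equation*}
  \f F^{(S)}(u,v)\;=\;\f F_0(v)+u\bigl(\alpha^{(S)}(v)\,\f w+\beta^{(S)}(v)\,\f t\bigr),\qquad S\in\{L,R\}.
\end{equation*}
Because $\alpha^{(S)},\beta^{(S)}$ are affine, each $\f F^{(S)}$ is bilinear, i.e.\ lies in $\mathcal{P}^1\times\mathcal{P}^1$, the two maps agree at $u=0$, and $D_u\f F^{(S)}(0,v)=\alpha^{(S)}(v)\f w+\beta^{(S)}(v)\f t$, $D_v\f F_0=\f t$. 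Substituting these into the left-hand side of \eqref{eq:G1-for-F-alone-alpha-beta-gamma}, the $\f w$-component is $\alpha^{(R)}\alpha^{(L)}-\alpha^{(L)}\alpha^{(R)}=0$ and the $\f t$-component is $\alpha^{(R)}\beta^{(L)}-\alpha^{(L)}\beta^{(R)}+\beta$, which vanishes exactly when $\beta=\alpha^{(L)}\beta^{(R)}-\alpha^{(R)}\beta^{(L)}$, i.e.\ when \eqref{eq:beta_function_of_alpha} holds; so \eqref{eq:G1-for-F-alone-alpha-beta-gamma} is satisfied with precisely the prescribed coefficients, as required.

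The delicate point---and the one I expect to be the main obstacle, should one also demand that the $\f F^{(S)}$ built in (ii) be regular and orientation preserving---is the control of $\det D\f F^{(S)}$. For any bilinear map this determinant is \emph{affine} in $(u,v)$; for the construction above it equals $\alpha^{(S)}(v)\,\det[\f w\ \f t]+u\,\det[\f F^{(S)}_{10}\ \f F^{(S)}_{11}]$, so it is sign-definite on the rectangle $\widehat\Omega^{(S)}$ if and only if it is so at the four corners, an explicit inequality relating the coefficients of $\alpha^{(S)}$ and $\beta^{(S)}$. One checks that this inequality depends only on the prescribed data and cannot in general be circumvented by a different bilinear realisation; thus, for the purely algebraic claim ``$\f F^{(S)}\in\mathcal{P}^1\times\mathcal{P}^1$ fulfilling \eqref{eq:G1-for-F-alone-alpha-beta-gamma}'' nothing more is needed, while a genuine (regular) AS $G^1$ parametrization with prescribed $\alpha^{(L)},\alpha^{(R)},\beta^{(L)},\beta^{(R)}$ exists exactly under that inequality.
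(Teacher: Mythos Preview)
Your argument is correct and follows essentially the same route as the paper's one-line proof, which simply invokes Proposition~\ref{prop:alpha-beta-gamma-are-splines} and \eqref{eq:betaS}. For part~(i) you have just unpacked that invocation: the key observation---that for bilinear $\f F^{(S)}$ the vector $D_v\f F_0$ is constant while $D_u\f F^{(S)}(0,\cdot)$ is affine, forcing both $\bar\alpha^{(S)}$ and the specific $\beta^{(S)}$ of \eqref{eq:betaS} to be linear---is exactly what the paper's citation of those two formulas is meant to convey. For part~(ii) the paper gives no details at all, and your explicit construction $\f F^{(S)}(u,v)=\f F_0(v)+u(\alpha^{(S)}(v)\f w+\beta^{(S)}(v)\f t)$ is a clean way to realise the prescribed data; it is in effect the inverse of the formulas in Proposition~\ref{prop:alpha-beta-gamma-are-splines}, so still in the same spirit. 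Your closing remark on regularity goes beyond what the proposition claims, but the observation that $\det D\f F^{(S)}$ is affine in $(u,v)$ for bilinear maps, and hence sign-definite iff it is so at the four corners, is correct and a useful addendum.
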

\begin{proof}
  The statement follows directly from Proposition
  \ref{prop:alpha-beta-gamma-are-splines}  and \eqref{eq:betaS}.
\end{proof}

\begin{remark}\label{rem:gamma-for-ASG1}
    The class of \emph{AS} $G^1$ parametrizations is
wider than only bilinear. We will show some examples later in Section
\ref{numerical_tests}  where $\f F^{(L)}$ and $\f F^{(R)}$ are higher
order polynomials at $\{0\}\times [0,1]$.
\end{remark}

\section{Two-patch geometry}
\label{two-patch}

In this section we analyze the  two-patch geometry.  This is the simplest
geometric configuration that allows us to focus on the $C^1$-continuity
constraints that are associated with each patch interface.
For the space of isogeometric $C^0$ functions, that is $\mathcal{V}^0$
defined in \eqref{eq:V}-\eqref{eq:V0}, optimal convergence under
$h$-refinement  is known since the results in \cite{bazilevs2006isogeometric,da2012anisotropic}  apply directly.
$C^1$-continuity constrains  traces 
and transversal  derivatives at $\Gamma$ of  functions $ \phi \in
\mathcal{V}^1 $. Therefore the approximation
properties of the space  $ \mathcal{V}^1 $  follow from  the ones  of traces of
functions and 
transversal derivatives of functions of  $ \mathcal{V}^1 $ 
 at  $\Gamma$.  
Let  $  \alpha^{(S)}(v)$, $  \beta^{(S)}(v)$, for $S\in\{L,R\}$  such that
  \eqref{eq:G1-for-F-alone-alpha-beta-gamma} holds.  We define the
  transversal  vector
  $  \bn$ to  $\Gamma$  as in \eqref{eq:bn} and introduce the space of  traces and  transversal directional 
 derivatives on $\Gamma$  
 \begin{equation} \label{eq:V1-Gamma}
  \mathcal{V}^1_{\Gamma} = 
   \left \{  \Gamma \ni (x,y) \mapsto [\phi(x,y) , \nabla \phi (x,y) 
   \cdot \bn (x,y) ] , \text{ such that }  \phi \in \mathcal{V}^1 \right \}
 \end{equation}
and its pullback \begin{equation} \label{eq:V1-Gamma-pullback}
 \widehat{ \mathcal{V}^1_{\Gamma}} = 
   \left \{  [\phi, \nabla \phi
   \cdot \bn ] \circ \f F_0, \text{ such that }  \phi \in \mathcal{V}^1 \right \}.
 \end{equation}
 With this choice we have that 
$ [\phi, \nabla \phi
   \cdot \bn ] \circ \f F_0 = [g_0,g_1]$ is equivalent to 
\begin{equation}
  \label{eq:equivalence-phi-g}
      g^{(S)} (u,v) = g_0(v) + ( \beta^{(S)}(v) g_0'(v) + \alpha^{(S)}(v)g_1(v)) \, u
      + O(u^2) ,
\end{equation}
thanks to 
\eqref{eq:phi-normal-derivative-S}.
\begin{remark}
   The   transversal  vector
  $  \bn$  depends on $\alpha^{(L)}$,  $\alpha^{(R)}$,
  $\beta^{(L)}$, $\beta^{(L)}$, and so do $  \mathcal{V}^1_{\Gamma} $
  and $ \widehat{ \mathcal{V}^1_{\Gamma}} $. The function trace in $\widehat{ \mathcal{V}^1_{\Gamma}}$
fulfills $\phi \circ \f F_0 \in \mathcal{S}^p_r$, this is not true for
the   transversal derivative $ \nabla \phi
   \cdot \bn$  which in general is a rational function of some degree
   higher than $p$ (a similar situation is analyzed in
   \cite{Takacs2014}). 
\end{remark}

\subsection{AS $G^1$-continuous two-patch geometry}
\label{two-patch-AS}
The next result  gives the key properties 
of the space 
$\widehat{ \mathcal{V}^1_{\Gamma} }$ (defined in \eqref{eq:V1-Gamma-pullback})
in the case of AS  $G^1$ parametrizations.
\begin{theorem}\label{thm:trace-for-AS-G1}
Let $\Omega= \Omega^{(L)}\cup  \Omega^{(R)}$, and let   $\f F^{(L)} $ and
$\f F^{(R)}$ be  AS  $G^1$ at the
interface $ \Gamma = \partial  \Omega^{(L)}\cup  \partial \Omega^{(R)}$.
 Then 
\begin{equation}
  \label{eq:trace-and-normal-derivative-for-ASG1}
 \S^p_{r+1}([0,1]) \times \S^{p-1}_{r}  ([0,1])\subset  \widehat{ \mathcal{V}^1_{\Gamma}}.
\end{equation}
\end{theorem}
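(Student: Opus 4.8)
The plan is to construct, for an arbitrary pair $(g_0, g_1) \in \S^p_{r+1}([0,1]) \times \S^{p-1}_r([0,1])$, an explicit isogeometric function $\phi \in \mathcal{V}^1$ whose trace and transversal derivative on $\Gamma$ are exactly $(g_0, g_1)$; by \eqref{eq:equivalence-phi-g} this amounts to exhibiting $g^{(L)}, g^{(R)}$ satisfying the $G^1$ condition \eqref{eq:G1-condition-for-g} together with the prescribed low-order behaviour in $u$. The natural ansatz is to take $g^{(S)}(u,v)$ to depend \emph{linearly} in $u$: set
\begin{equation*}
  g^{(S)}(u,v) = g_0(v) + \bigl(\beta^{(S)}(v)\, g_0'(v) + \alpha^{(S)}(v)\, g_1(v)\bigr)\, u,
\end{equation*}
for $S \in \{L,R\}$, where $\alpha^{(L)}, \alpha^{(R)}, \beta^{(L)}, \beta^{(R)} \in \mathcal{P}^1([0,1])$ are the polynomials supplied by the AS $G^1$ hypothesis (Definition \ref{def:AS-G1}). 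First I would check the $G^1$ condition: plugging this ansatz into \eqref{eq:G1-condition-for-g} and using $D_u g^{(S)}(0,v) = \beta^{(S)}(v) g_0'(v) + \alpha^{(S)}(v) g_1(v)$ and $D_v g_0(v) = g_0'(v)$, the terms involving $g_1$ cancel by the sign/normalization already built in, and the terms involving $g_0'$ cancel precisely because of the AS $G^1$ relation $\alpha^{(R)}\beta^{(L)} - \alpha^{(L)}\beta^{(R)} = \beta$ together with \eqref{eq:G1-for-F-alone-alpha-beta-gamma}; this is a short algebraic verification.

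The second, and more delicate, step is to verify that this $\phi$ is actually an element of $\mathcal{V}$, i.e. that $g^{(S)} = \phi \circ \f F^{(S)}$ lies in the spline space $\S^p_r(\widehat\Omega^{(S)})$ in \emph{both} parametric directions. In the $v$-direction: $g_0 \in \S^p_{r+1} \subset \S^p_r$ and $g_0' \in \S^{p-1}_r$, while $\alpha^{(S)}, \beta^{(S)}$ are linear, so the products $\beta^{(S)} g_0'$ and $\alpha^{(S)} g_1$ are piecewise polynomials of degree $\le p$ — but one must confirm they have the required $C^r$ regularity at interior knots; here the extra smoothness $C^{r+1}$ of $g_0$ (hence $C^r$ of $g_0'$) and $C^r$ of $g_1$ is exactly what makes the degree-$p$, $C^r$ membership work after multiplying by a linear factor. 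In the $u$-direction the function is affine, hence trivially in the spline space. One should also note that defining $\phi$ consistently on the whole patches (not just near $\Gamma$) requires extending $g^{(S)}$ away from $u=0$, which is immediate since the formula above already defines a global function on $\widehat\Omega^{(S)}$ that is a spline; and one should remark that only a \emph{neighborhood} of $\Gamma$ matters for $\mathcal{V}^1_\Gamma$, or alternatively multiply by a suitable spline cutoff in $u$ — I would argue the cleaner route is to keep the global linear-in-$u$ function, since it is already a legitimate element of $\mathcal{V}$.

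I expect the main obstacle to be the bookkeeping in the second step: showing that a degree-$\le p$ piecewise polynomial obtained as (linear polynomial) $\times$ ($C^r$ or $C^{r+1}$ spline of degree $p-1$ or $p$) genuinely lands in $\S^p_r([0,1])$, and in particular that no spurious loss of continuity or degree inflation occurs at the interior knots of the parametric mesh. The degree count $2p-1$ for $\alpha^{(S)}$ and $2p$ for $\beta$ in the general (non-AS) case is exactly what would break this argument; the whole point of the AS $G^1$ restriction ($\alpha^{(S)}, \beta^{(S)}$ linear) is that multiplication by them raises the degree by only one and preserves $C^r$ regularity, so the target space $\S^p_r$ is reached with room to spare. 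Once this is settled, reading off the trace $\phi\circ\f F_0 = g_0$ and, via \eqref{eq:phi-normal-derivative-S}, the transversal derivative $\nabla\phi\cdot\bn \circ \f F_0 = g_1$ completes the inclusion.
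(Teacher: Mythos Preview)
Your proposal is correct and follows essentially the same approach as the paper: define $g^{(S)}(u,v) = g_0(v) + \bigl(\beta^{(S)}(v)\, g_0'(v) + \alpha^{(S)}(v)\, g_1(v)\bigr)\, u$, check that the linearity of $\alpha^{(S)}, \beta^{(S)}$ forces this to lie in $\S^p_r(\widehat\Omega^{(S)})$, and conclude via \eqref{eq:equivalence-phi-g}. Your explicit verification of \eqref{eq:G1-condition-for-g} is more detailed than the paper (which absorbs that check into the equivalence \eqref{eq:equivalence-phi-g}); note only the harmless sign slip --- by \eqref{eq:beta_function_of_alpha} one has $\alpha^{(R)}\beta^{(L)} - \alpha^{(L)}\beta^{(R)} = -\beta$, which is exactly what makes the $g_0'$ terms cancel.
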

\begin{proof}
For linear $\alpha^{(S)} $ and
$\beta^{(S)} $  ($S \in \{L,R\}$) and $[g_0,g_1] \in \S^p_{r+1}([0,1])
\times \S^{p-1}_{r}  ([0,1])$, we have  
\begin{equation}
  \label{eq:equivalence-phi-g-2}
      g^{(S)} (u,v) = g_0(v) + ( \beta^{(S)}(v) g_0'(v) + \alpha^{(S)}(v) g_1(v)) \, u
       \in \S^p_{r}(\hat \Omega^{(S)} ) . 
\end{equation}
Then the statement is a direct consequence of
\eqref{eq:equivalence-phi-g}. 
\end{proof}

Theorem \ref{thm:trace-for-AS-G1}
guarantees that the trace space for the function value $\{ \phi \circ \f F_0 : \phi
\in  \mathcal{V}^1\}$ includes all
splines of degree $p$ and regularity at least $r+1$, and independently
the trace space for the transversal derivatives of the function  $\{(\nabla  \phi
   \cdot \bn )\circ \f F_0 :\phi
\in  \mathcal{V}^1\}$  includes all
splines of degree $p-1$ and regularity at least $r$.  This
suggests that  $  \mathcal{V}^1_{\Gamma}  $  enjoys  optimal
approximation order, and consequently for the whole space $  \mathcal{V}^1$ when  $r< p-1$.

However, if $r= p-1$ and the parametrization is not trivial, the space  $  \mathcal{V}^1_{\Gamma}  $ suffers
of $C^1$ locking, that is, $h$-refinement does not improve the  approximation
properties  for $\phi $ and $\nabla \phi
   \cdot \bn $  independently.  The following theorem gives some
   understanding of this phenomenon. 

\begin{theorem}\label{thm:C1-locking-for-AS-G1}
Let $\Omega= \Omega^{(L)}\cup  \Omega^{(R)}$,   $\f F^{(L)} $,
$\f F^{(R)}$ be  AS  $G^1$ at the
interface $ \Gamma =   \Omega^{(L)}\cap \Omega^{(R)}$,   and $r= p-1$. Furthermore, 
let $\mathcal{G}_0 $ and $   \mathcal{G}_1$ be two spaces such that
\begin{equation}\label{eq:tensor-product-subspace}
  \mathcal{G}_0 \times  \mathcal{G}_1 \subseteq \widehat{ \mathcal{V}^1_{\Gamma}}.
\end{equation}
If either  $\beta^{(L)} \neq  0 $ or  $\beta^{(R)} \neq  0 $ then  the
dimension of  $\mathcal{G}_0$  is independent of $h$.
If   $\alpha^{(L)}$ and $\alpha^{(R)}$ are linearly independent,  then  the
dimension of  $\mathcal{G}_1$  is independent of $h$.
\end{theorem}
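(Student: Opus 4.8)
The plan is to exploit the structural characterization of $\widehat{\mathcal{V}^1_\Gamma}$ coming from \eqref{eq:equivalence-phi-g-2}: if $\mathcal{G}_0\times\mathcal{G}_1\subseteq\widehat{\mathcal{V}^1_\Gamma}$, then for every $g_0\in\mathcal{G}_0$ and every $g_1\in\mathcal{G}_1$ the two functions $\beta^{(S)}(v)g_0'(v)+\alpha^{(S)}(v)g_1(v)$, for $S\in\{L,R\}$, must each belong to $\mathcal{S}^p_r([0,1])=\mathcal{S}^p_{p-1}([0,1])$ (this is what makes $g^{(S)}(u,v)$ in \eqref{eq:equivalence-phi-g-2} lie in $\mathcal{S}^p_r(\hat\Omega^{(S)})$). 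The key point is that the product of a linear polynomial with a spline of degree $p$ and regularity $p-1$ is, generically, only a spline of degree $p+1$ and regularity $p-1$; it drops back to degree $p$ only if the spline satisfies extra linear constraints at each interior knot. I would first make this precise: write $\mathcal{S}^{p+1}_{p-1}/\mathcal{S}^p_{p-1}$ has dimension equal to the number of interior knots, and multiplication by a fixed nonzero linear $\ell(v)$ composed with the quotient map gives a linear map whose kernel (intersected with $\mathcal{S}^p_{p-1}$, or the relevant space) is $h$-independent in dimension.

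For the statement on $\mathcal{G}_1$: take $g_0=0$. Then we need $\alpha^{(L)}g_1,\alpha^{(R)}g_1\in\mathcal{S}^p_{p-1}$ for all $g_1\in\mathcal{G}_1$. Since $g_1$ itself need only lie in $\mathcal{S}^{p-1}_{p-2}$ or so, and $\alpha^{(S)}$ is linear, $\alpha^{(S)}g_1$ is a priori in $\mathcal{S}^p_{p-2}$; demanding it lie in $\mathcal{S}^p_{p-1}$ forces a $C^{p-1}$-matching condition at each interior knot, i.e. the jump of the $(p-1)$-st derivative of $\alpha^{(S)}g_1$ vanishes. When $\alpha^{(L)}$ and $\alpha^{(R)}$ are linearly independent, the two conditions ($S=L$ and $S=R$) together at a given knot $v_k$ pin down both the jump of $g_1^{(p-2)}$ and (using that $\alpha$ is linear, so $(\alpha g_1)^{(p-1)}$ involves $\alpha' g_1^{(p-2)}$ and $\alpha g_1^{(p-1)}$, but $g_1^{(p-1)}$ doesn't jump for a degree-$(p-1)$ spline... careful bookkeeping needed) essentially force $g_1$ to be $C^{p-1}$ at $v_k$, hence a polynomial. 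So $\mathcal{G}_1\subseteq\mathcal{P}^{p-1}([0,1])$ up to an $h$-independent correction, giving $\dim\mathcal{G}_1$ bounded independently of $h$. The cleanest way to phrase this is probably via the dimension of $\mathcal{S}^p_{p-2}([0,1])/\mathcal{S}^p_{p-1}([0,1])$ at each knot and an injectivity argument for the pair of multiplication maps.

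For the statement on $\mathcal{G}_0$: suppose $\beta^{(L)}\neq 0$ (the case $\beta^{(R)}\neq 0$ is symmetric). Take $g_1=0$; then we need $\beta^{(L)}(v)g_0'(v)\in\mathcal{S}^p_{p-1}([0,1])$ for every $g_0\in\mathcal{G}_0$. Now $g_0\in\mathcal{S}^p_{p-1}$ so $g_0'\in\mathcal{S}^{p-1}_{p-2}$, and multiplying by the (nonzero, linear) $\beta^{(L)}$ gives a priori an element of $\mathcal{S}^p_{p-2}$; forcing it into $\mathcal{S}^p_{p-1}$ again imposes one linear condition per interior knot on $g_0'$, hence constrains $g_0$. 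Quotienting out constants, the map $g_0\mapsto \beta^{(L)}g_0' \bmod \mathcal{S}^p_{p-1}$ from $\mathcal{G}_0$ to the $h$-dependent quotient must be zero, and one checks its kernel has $h$-independent dimension: $g_0'$ must be such that $\beta^{(L)}g_0'$ is globally $C^{p-1}$, which (since $\beta^{(L)}$ is a fixed nonzero linear polynomial vanishing at at most one point) forces $g_0'$ to be a polynomial away from that point and a controlled singularity-type condition there, leaving only finitely many degrees of freedom independent of $h$.

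The main obstacle I anticipate is the careful knot-by-knot derivative bookkeeping in the case $r=p-1$: one must track exactly which derivatives of $\alpha^{(S)}g_1$ (resp.\ $\beta^{(S)}g_0'$) can jump, use that $\alpha^{(S)},\beta^{(S)}$ are \emph{linear} (so their second derivatives vanish and Leibniz has only two terms), and correctly count that demanding membership in $\mathcal{S}^p_{p-1}$ rather than $\mathcal{S}^p_{p-2}$ is exactly one scalar condition per interior knot — and then argue that, because there are two such conditions ($L$ and $R$) with linearly independent $\alpha$'s (resp.\ a single nontrivial $\beta$), the surviving space is forced down to polynomials plus an $h$-independent remainder. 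I would handle this by introducing the $(p-1)$-st derivative jump functional at each interior knot as a linear functional on $\mathcal{S}^p_{p-2}$, showing its composition with multiplication-by-$\alpha^{(S)}$ is a nonzero functional on the relevant space of $g_1$'s, and concluding that the intersection of kernels over all knots and both sides is an $h$-independent-dimensional space.
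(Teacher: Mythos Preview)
Your overall strategy matches the paper's: set $g_1=0$ (resp.\ $g_0=0$), use \eqref{eq:equivalence-phi-g} to see that $\beta^{(S)}g_0'$ (resp.\ $\alpha^{(S)}g_1$) must lie in $\mathcal{S}^p_{p-1}([0,1])$, and conclude that this forces $g_0$ (resp.\ $g_1$) into an $h$-independent finite-dimensional space. For $\mathcal{G}_0$ your argument is essentially the paper's, just phrased via jump functionals rather than by dividing through by $\beta^{(S)}$ (away from its zero) to see $g_0'\in C^{p-1}$ directly; the paper then concludes $g_0$ is a degree-$p$ polynomial, or at worst two polynomial pieces if the common zero of $\beta^{(L)},\beta^{(R)}$ happens to be a knot, giving $\dim\mathcal{G}_0\le p+2$.

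For $\mathcal{G}_1$ the paper takes a cleaner route that sidesteps exactly the bookkeeping you flag. You assume ``$g_1$ need only lie in $\mathcal{S}^{p-1}_{p-2}$ or so'' in order to start your jump analysis, but this is not known a priori: all you have is $\alpha^{(S)}g_1=\partial_u g^{(S)}(0,\cdot)\in\mathcal{S}^p_{p-1}$, with no direct control on $g_1$ itself. The paper instead exploits that linear independence of $\alpha^{(L)},\alpha^{(R)}$ means they span $\mathcal{P}^1([0,1])$, so there are constants with $C_L\alpha^{(L)}+C_R\alpha^{(R)}=1$ and $C_L'\alpha^{(L)}+C_R'\alpha^{(R)}=v$. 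Taking the corresponding linear combinations of $\alpha^{(L)}g_1$ and $\alpha^{(R)}g_1$ gives $g_1\in\mathcal{S}^p_{p-1}$ and $vg_1\in\mathcal{S}^p_{p-1}$ directly, which forces $g_1\in\mathcal{S}^{p-1}_{p-1}=\mathcal{P}^{p-1}$ and hence $\dim\mathcal{G}_1\le p$, with no knot-by-knot derivative accounting. Your approach can be repaired (use this same linear-combination step first to place $g_1$ in $\mathcal{S}^p_{p-1}$, then your Leibniz/jump argument goes through), but at that point the paper's degree-count already finishes the job.
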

\begin{proof}
  Let $  [g_0,0] = [\phi, \nabla \phi
   \cdot \bn ] \circ \f F_0  $   and assume  that $\beta^{(L)} $ is a  linear function not
 identically zero.   Furthermore assume that if there exists a $v_0 \in [0,1]$, such that
$\beta^{(L)} (v_0)= 0$, then from the assumption $\beta^{(R)} (v_0)\neq 0$. Using
\eqref{eq:equivalence-phi-g},  i.e., 
   \begin{displaymath}
      g^{(S)} (u,v) = g_0(v) + \beta^{(S)}(v) g_0'(v) \, u
      + O(u^2) ,
   \end{displaymath}
and using $ g^{(S)} (u,v)  \in \S^p_{p-1}(\hat \Omega^{(S)}
)$  then  $ g_0 $ is a spline in $\S^p_{p-1}([0,1])$ and also $ g'_0 $ is
 $C^{p-1}$, therefore  $ g_0 $ is in fact a degree $p$ global polynomial. The same
 conclusion holds when  there exists a $v_0 \in [0,1]$ such that
$\beta^{(L)} (v_0)=\beta^{(R)} (v_0) = 0$ but $v_0 $ is not a knot of
the spline space $\S^p_{p-1}([0,1])$ under consideration. If, instead,
$\beta^{(L)} (v_0)=\beta^{(R)} (v_0) = 0$ and  $v_0 $ is a knot of
the spline space $S^p_{p-1}([0,1])$, then  $ g_0 $ has in
general  $p-1$ continuous derivatives at $v_0$. In conclusion if $g_0
\in \mathcal{G}_0$  then it has to be piecewise polynomial of degree
$p$ on at most two intervals, and is in $C^{p-1}([0,1])$ globally. Hence the dimension
of  $\mathcal{G}_0$ is at most  $p+2$.

Take now $  [0,g_1] = [\phi, \nabla \phi
   \cdot \bn ] \circ \f F_0  $  and assume  $\alpha^{(L)}$ and
   $\alpha^{(R)}$ are linearly independent, that is, their linear combination gives the whole space $\mathcal{P}^1([0,1])$.
Now \eqref{eq:equivalence-phi-g}, i.e., 
   \begin{displaymath}
      g^{(S)} (u,v) =  \alpha^{(S)}(v) g_1(v) \, u
      + O(u^2),
   \end{displaymath}
 after taking the derivative in the
variable $u$ and evaluating at $u=0$ gives 
   \begin{displaymath}
       \frac{\partial}{\partial u}      g^{(S)} (0,v) =  \alpha^{(S)}(v) g_1(v), 
   \end{displaymath}
and, by suitable linear combination for $S=L$ and $S=R$ yields
\begin{displaymath}
  C_L      \frac{\partial}{\partial u}  g^{(L)} (0,v) + C_R
  \frac{\partial}{\partial u}  g^{(R)} (0,v) = g_1(v).
\end{displaymath}
Since $ g^{(S)} (u,v)  \in \S^p_{p-1}(\hat \Omega^{(S)}
)$, $g_1 \in \S^p_{p-1}([0,1])$. Similarly
\begin{displaymath}
  C_L^\prime \frac{\partial}{\partial u}  g^{(L)} (0,v) + C_R^\prime
  \frac{\partial}{\partial u}  g^{(R)} (0,v) = v g_1(v),
\end{displaymath}
which means $vg_1 \in \S^{p}_{p-1}([0,1])$. in conclusion $g_1 \in \S^{p-1}_{p-1}([0,1])$, therefore  the dimension
of  $\mathcal{G}_1$ is at most  $p$.
\end{proof}
 When  $\beta^{(L)} =
   \beta^{(R)} = 0 $  and  for all $v \in [0,1]$ we have $ \alpha^{(L)} (v) = C
   \alpha^{(R)} (v) $ then 
   \begin{displaymath}
     D_u \f F^{(L)}(0,v) =  C            D_u  \f F^{(R)}(0,v) ,
   \end{displaymath}
that is,  the parametrization is trivially $G^1$ in the sense that it can be made $C^1$ by scaling the variable $u$ by a
multiplicative factor in (one of) the two patches. If we are in such a case, as for $C^1$
parametrizations, one can easily conclude $\S^p_{r}([0,1]) \times \S^{p}_{r}
([0,1]) =   \widehat{ \mathcal{V}^1_{\Gamma}}$. If we are not in such a
special configuration, then  Theorem \ref{thm:C1-locking-for-AS-G1}
states that we can \emph{independently} approximate the trace and
transversal derivative of a function by isogeometric $C^1$ functions, 
but for $h$-refinement the approximation error does not converge to zero.

\subsection{General two-patch geometry}
\label{two-patch-GENERAL}

In this section we study  the  approximation property of
$\mathcal{V}^1$,  for   two-patch geometry
parametrizations  beyond analysis-suitable $G^1$-continuity.  
 We consider a specific case where  $\f F^{(L)} $ is the
identity mapping while $\f F^{(R)}  \in  [\S^{p}_{r}(\hat \Omega^{(R)})]^2 $, and select 

\begin{equation}
  \label{eq:parameters-for-FL-identity}
  \begin{aligned}
     \alpha^{(L)}(v) & = 1\\
     \alpha^{(R)}(v) & = D_u \f F^{(R)} (0,v) \cdot\f e_1 \in \S^{p}_{r}([0,1]), \\ 
     \beta^{(L)} (v) & = 0\\ 
     \beta^{(L)} (v) & = D_u \f F^{(R)} (0,v) \cdot\f e_2 \in \S^{p}_{r}([0,1]).
  \end{aligned}
\end{equation}
Then $\bn = \f e_1  $ and  
\begin{equation}\label{eq:V1Gamma-nonASG1}
   \widehat{
   \mathcal{V}^1_{\Gamma}}  \equiv\mathcal{V}^1_{\Gamma}    = \left \{ \left
       [\phi, \frac{\partial}{\partial x}   \phi \right ], \text{ such that }  \phi \in \mathcal{V}^1 \right \}.
\end{equation}
 Equation \eqref{eq:equivalence-phi-g} simplifies to 
\begin{equation}
  \label{eq:equivalence-phi-g-Left-identity}
  \begin{aligned}&\left [\phi, \frac{\partial \phi}{\partial x}\right ] \circ \f F_0 = [g_0,g_1] 
    \\ & \quad \Leftrightarrow 
    \left \{\begin{aligned}
      g^{(L)} (u,v) & = g_0(v) +   g_1(v) \, u
      + O(u^2) ,\\  
      g^{(R)} (u,v) &= g_0(v) + ( \beta^{(R)}(v) g_0'(v) + \alpha^{(R)}(v) g_1(v)) \, u
      + O(u^2).
    \end{aligned} \right . 
  \end{aligned}
\end{equation}
 The following statement gives a full characterization of the space ${
   \mathcal{V}^1_{\Gamma}}  $. We use 
$\S^{-1}_{r}([0,1]) $ to indicate the null space $\emptyset$, and we recall that $p_{\alpha}=0$ is not allowed.

\begin{theorem}\label{thm:trace-for-general-parametrization}
Let $\Omega= \Omega^{(L)}\cup  \Omega^{(R)}$,   $\f F^{(L)} $
be the identity and $  \alpha^{(R)}$,  $  \alpha^{(L)}$, $\beta^{(R)}  $,
$\beta^{(L)}  $ as in \eqref{eq:parameters-for-FL-identity}. Assume  $   \alpha^{(R)} \in  \S^{p_{\alpha}}_{r}([0,1])
$, with $\alpha^{(R)} \not \in  \S^{p_{\alpha}-1}_{r}([0,1]) $.  

If $\beta^{(R)} = 0  $, then
\begin{equation}
  \label{eq:nonASG1-beta-zero}
  {   \mathcal{V}^1_{\Gamma}} = \S^{p}_{r}([0,1]) \times
 \S^{p-p_{\alpha}}_{r} ([0,1]).
\end{equation}

 If instead  $   \beta^{(R)} \in  \S^{p_{\beta}}_{r}([0,1])  $ with $   \beta^{(R)}
\not \in  \S^{p_{\beta}-1}_{r}([0,1])  $, 
 then
\begin{equation}
  \label{eq:nonASG1-beta-nonzero}
  {   \mathcal{V}^1_{\Gamma}} = \S^{\min\{p,p-p_{\beta}+1\}}_{r+1}([0,1])
 \times \S^{p-p_{\alpha}}_{r} ([0,1]).
\end{equation}
\end{theorem}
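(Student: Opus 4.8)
The plan is to characterize the space $\mathcal{V}^1_{\Gamma}$ by working directly with the system \eqref{eq:equivalence-phi-g-Left-identity}: a pair $[g_0,g_1]$ lies in $\mathcal{V}^1_{\Gamma}$ if and only if there exist $g^{(L)}\in\S^p_r(\hat\Omega^{(L)})$ and $g^{(R)}\in\S^p_r(\hat\Omega^{(R)})$ whose trace and $u$-derivative at $u=0$ match the prescribed data, i.e., $g^{(L)}(0,v)=g^{(R)}(0,v)=g_0(v)$, $D_u g^{(L)}(0,v)=g_1(v)$, and $D_u g^{(R)}(0,v)=\beta^{(R)}(v)g_0'(v)+\alpha^{(R)}(v)g_1(v)$. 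The ``only if'' direction gives necessary conditions on $g_0,g_1$; the ``if'' direction is established by explicitly constructing $g^{(L)},g^{(R)}$ (e.g. taking them linear in $u$, so only the constant and linear $u$-coefficients are active and each must lie in the right univariate spline space).

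First I would treat the easy factor. From the left patch, $g^{(L)}$ linear in $u$ with $g^{(L)}(u,v)=g_0(v)+g_1(v)u$ forces $g_0\in\S^p_r([0,1])$ and $g_1\in\S^p_r([0,1])$; conversely any such pair is clearly attainable from the left. So all constraints beyond $g_0\in\S^p_r$, $g_1\in\S^p_r$ come from requiring $D_u g^{(R)}(0,v)=\beta^{(R)}g_0'+\alpha^{(R)}g_1\in\S^p_r([0,1])$ (this being the $u$-linear coefficient of a spline in $\S^p_r(\hat\Omega^{(R)})$, hence itself in $\S^p_r([0,1])$), and conversely, whenever $\beta^{(R)}g_0'+\alpha^{(R)}g_1\in\S^p_r([0,1])$ the right patch can be realized linearly in $u$. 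Thus
\[
 \mathcal{V}^1_{\Gamma}=\bigl\{[g_0,g_1]\in\S^p_r\times\S^p_r \;:\; \beta^{(R)}g_0'+\alpha^{(R)}g_1\in\S^p_r([0,1])\bigr\}.
\]

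Next I would analyze this single constraint. In the case $\beta^{(R)}=0$ it reads $\alpha^{(R)}g_1\in\S^p_r([0,1])$ with $\alpha^{(R)}\in\S^{p_\alpha}_r$, $\alpha^{(R)}\notin\S^{p_\alpha-1}_r$; there is no coupling with $g_0$, so $g_0$ ranges freely over $\S^p_r$, and I must show $\{g_1\in\S^p_r:\alpha^{(R)}g_1\in\S^p_r\}=\S^{p-p_\alpha}_r$. The inclusion $\supseteq$ is degree bookkeeping on each knot span (product of a degree-$(p-p_\alpha)$ piece with a degree-$p_\alpha$ piece is degree $p$, and $C^r$ times $C^r$ stays $C^r$, using $r\ge1$ and the fixed interior knot multiplicity $p-r$). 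For $\subseteq$: if $g_1\in\S^p_r$ has higher degree than $p-p_\alpha$ somewhere, then since $\alpha^{(R)}$ genuinely has degree $p_\alpha$ (on every span, by $\alpha^{(R)}\notin\S^{p_\alpha-1}_r$), the product would exceed degree $p$ on some span — here I would argue at the level of leading coefficients on a span, and separately check the regularity/knot-multiplicity constraint forces $g_1\in\S^{p-p_\alpha}_r$ rather than merely degree $p-p_\alpha$. This yields \eqref{eq:nonASG1-beta-zero}.

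For $\beta^{(R)}\neq0$, $\beta^{(R)}\in\S^{p_\beta}_r$, $\beta^{(R)}\notin\S^{p_\beta-1}_r$, the constraint $\beta^{(R)}g_0'+\alpha^{(R)}g_1\in\S^p_r$ couples the two components, but I would observe it \emph{uncouples} the characterization in the following way: given $g_0\in\S^p_r$, the map $g_1\mapsto\beta^{(R)}g_0'+\alpha^{(R)}g_1$ is an affine bijection onto a coset, so for fixed admissible $g_0$ the admissible $g_1$'s form a coset of $\{g_1:\alpha^{(R)}g_1\in\S^p_r\}=\S^{p-p_\alpha}_r$ (nonempty iff $\beta^{(R)}g_0'$ itself is congruent mod $\alpha^{(R)}\S^p_r + \S^p_r$ to something in $\S^p_r$ — which, since $\S^p_r\subseteq$ target, reduces to requiring $\beta^{(R)}g_0'\in\S^p_r+\alpha^{(R)}\S^p_r$, automatically true because $\beta^{(R)}g_0'\in\S^{?}_{r-1}$ must be compensated). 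The cleaner route: the \emph{projection} of $\mathcal{V}^1_{\Gamma}$ onto the $g_1$-component is still all of $\S^{p-p_\alpha}_r$ (take $g_0$ constant, $g_0'=0$), giving the second factor; and the projection onto the $g_0$-component is $\{g_0\in\S^p_r:\exists\,g_1\in\S^p_r,\ \beta^{(R)}g_0'+\alpha^{(R)}g_1\in\S^p_r\}$, i.e. $\{g_0\in\S^p_r:\beta^{(R)}g_0'\in \S^p_r+\alpha^{(R)}\S^p_r\}$. I then show this equals $\S^{\min\{p,p-p_\beta+1\}}_{r+1}$: the regularity jumps from $r$ to $r+1$ because $g_0'$ must be $C^r$-compatible against a $C^{r-1}$ obstruction (forcing an extra order of smoothness on $g_0$ at interior knots, exactly as in the proof of Theorem~\ref{thm:C1-locking-for-AS-G1}), and the degree drops to $p-p_\beta+1$ when $p_\beta>1$ by the same leading-coefficient argument applied to $\beta^{(R)}g_0'$ (degree $p_\beta+(p-1)$ on a span must be absorbed, so $g_0'$ has degree $\le p-p_\beta$, i.e. $g_0$ has degree $\le p-p_\beta+1$, capped at $p$). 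Finally I would check that the two one-dimensional conditions are jointly achievable — i.e. that $\mathcal{V}^1_{\Gamma}$ is exactly the product of its projections — by noting that once $g_0\in\S^{\min\{p,p-p_\beta+1\}}_{r+1}$ is fixed, one valid $g_1$ exists, and then any element of $\S^{p-p_\alpha}_r$ may be added to it while staying admissible; this gives \eqref{eq:nonASG1-beta-nonzero}.

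The main obstacle I anticipate is the $\subseteq$ direction of the division arguments: proving that $\alpha^{(R)}g_1\in\S^p_r$ with $\alpha^{(R)}$ of exact degree $p_\alpha$ and regularity exactly (not more than) $C^r$ forces $g_1\in\S^{p-p_\alpha}_r$ — and the analogous claim for $\beta^{(R)}g_0'$ — including getting the regularity index right (the $r+1$ in \eqref{eq:nonASG1-beta-nonzero}) and handling degenerate spans where $\alpha^{(R)}$ or $\beta^{(R)}$ might vanish at an interior knot. This requires a careful local (per knot-span and per knot) analysis of leading coefficients and of the jump conditions, using the standing assumptions $r\ge1$, uniform knots, and interior multiplicity $p-r$, rather than any slick global algebraic identity.
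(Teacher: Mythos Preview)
Your plan is essentially the same as the paper's proof. Both reduce the question, via \eqref{eq:equivalence-phi-g-Left-identity} and the fact that the $u$-constant and $u$-linear coefficients of $g^{(S)}\in\S^p_r(\hat\Omega^{(S)})$ must lie in $\S^p_r([0,1])$, to the single scalar constraint $\beta^{(R)}g_0'+\alpha^{(R)}g_1\in\S^p_r([0,1])$; both prove the inclusion $\supseteq$ by taking $g^{(R)}$ linear in $u$, and both argue $\subseteq$ by degree counting. The paper is in fact considerably terser than your outline: for $\subseteq$ it simply says that $g^{(R)}\in\S^p_r(\hat\Omega^{(R)})$ ``forbids $g_1$ to be of a polynomial degree higher than $p-p_\alpha$'' and repeats the sentence verbatim for the second case, without separately justifying the regularity index $r+1$ or discussing cancellation between the two summands.

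One point where your write-up should be tightened: in the coupled case $\beta^{(R)}\neq 0$, your projection argument as stated only shows that the $g_1$-fiber over each admissible $g_0$ \emph{contains} a coset of $\S^{p-p_\alpha}_r$, and that $A\times B\subseteq\mathcal V^1_\Gamma$. It does not by itself give $\mathcal V^1_\Gamma\subseteq A\times B$, because a priori the leading terms of $\beta^{(R)}g_0'$ and $\alpha^{(R)}g_1$ could cancel and allow $g_0\notin A$ or $g_1\notin B$. You correctly flag this as the main obstacle; to close it you will need to argue span-by-span that such cancellation cannot produce an element of $\S^p_r$ (separating the contributions by comparing the top-degree coefficients of $\alpha^{(R)}$ and $\beta^{(R)}$, and then treating the knot-jump conditions). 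Also, your parenthetical ``on every span, by $\alpha^{(R)}\notin\S^{p_\alpha-1}_r$'' overstates what the hypothesis gives: it only guarantees exact degree $p_\alpha$ on \emph{some} span, so the leading-coefficient argument should be run on such a span (and the regularity argument then propagates the conclusion). The paper's proof glosses over both of these points as well, so your plan is at least as complete as what is written there.
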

\begin{proof} 
Since $\f F^{(L)} $ is the identity  we have  $
\mathcal{V}^1_{\Gamma} = \widehat{
   \mathcal{V}^1_{\Gamma}} \subset \S^{p}_{r}([0,1]) \times
 \S^{p}_{r} ([0,1])$. 

Consider first  the case  $\beta^{(R)} = 0  $. It is easy to see that 
\[ 
\widehat{ \mathcal{V}^1_{\Gamma}} \supset \S^{p}_{r}([0,1]) \times \S^{p-p_{\alpha}}_{r} ([0,1]).
\] 
Indeed, given any $[g_0,g_1]  \in \S^{p}_{r}([0,1]) \times
 \S^{p-p_{\alpha}}_{r} ([0,1])$ we can find  $ g^{(S)} (u,v)  \in \S^p_{r}(\hat
\Omega^{(S)} )$ such that \eqref{eq:equivalence-phi-g-Left-identity}
holds. In particular, take 
\[
	g^{(R)} (u,v) = g_0(v) + \alpha^{(R)}(v) g_1(v)\, u.
\]  
Moreover,  $\S^{p}_{r}([0,1]) \times \S^{p-p_{\alpha}}_{r} ([0,1])$ 
is equal to the space  $ \widehat{\mathcal{V}^1_{\Gamma}} $ 
due to the second condition in \eqref{eq:equivalence-phi-g-Left-identity}, which is 
\[
	g^{(R)} (u,v) = g_0(v) + \alpha^{(R)}(v) g_1(v)\, u  + O(u^2) 
\]
for $\beta^{(R)}  = 0  $, since $ g^{(R)} (u,v) \in \S^p_{r}(\hat \Omega^{(R)} )$ forbids  $
g_1$ to be of a polynomial degree higher than $p-p_{\alpha}$.

The second case,   $   \beta^{(R)} \in  \S^{p_{\beta}}_{r}([0,1])  $ with $   \beta^{(R)}
\not \in  \S^{p_{\beta}-1}_{r}([0,1])  $, is similar.  Again, we have $ \widehat{
   \mathcal{V}^1_{\Gamma}} \supset  \S^{\min\{p,p-p_{\beta}+1\}}_{r+1}([0,1])
 \times \S^{p-p_{\alpha}}_{r} ([0,1])$. Indeed, given any 
 \[
 	[g_0,g_1]  \in  \S^{\min\{p,p-p_{\beta}+1\}}_{r+1}([0,1]) \times \S^{p-p_{\alpha}}_{r} ([0,1])
\]
we can find  $ g^{(S)} (u,v)  \in \S^p_{r}(\hat \Omega^{(S)} )$ 
such that \eqref{eq:equivalence-phi-g-Left-identity} holds. This time, take 
\[
	g^{(R)} (u,v) = g_0(v) + ( \beta^{(R)}(v) g_0'(v) +
     \alpha^{(R)}(v) g_1(v)) \, u.
\]
As before  $ \S^{\min\{p,p-p_{\beta}+1\}}_{r+1}([0,1]) \times \S^{p-p_{\alpha}}_{r} ([0,1])$ is equal to the space 
  $ \widehat{ \mathcal{V}^1_{\Gamma}} $ due to the second condition in \eqref{eq:equivalence-phi-g-Left-identity}, that is  
\[ 
	\S^p_{r}(\hat \Omega^{(R)} )  \ni g^{(R)} (u,v) = g_0(v) + ( \beta^{(R)}(v) g_0'(v) + \alpha^{(R)}(v) g_1(v)) \, u  + O(u^2).
\]
This concludes the proof.
\end{proof}
\begin{remark}
 In the setting of this section, $\f F^{(L)}  $ being the identity and
 $ \f F^{(R)} \in \mathcal{S}^{p}_{r} (\widehat\Omega^{(R)})\times
 \mathcal{S}^{p}_{r} (\widehat\Omega^{(R)})$, we always have $ p_\alpha
 \leq p$, $ p_\beta  \leq p$, which follows from
   \eqref{eq:parameters-for-FL-identity}. Then Theorem~\ref{thm:trace-for-general-parametrization} guarantees
 \begin{displaymath}
   \mathcal{V}^1_{\Gamma}  =  \widehat{
   \mathcal{V}^1_{\Gamma}}  \supset \S^{1}_{p}([0,1]) \times
   \S^{0}_{p-1}([0,1])   = \mathcal{P}^1 \times \mathcal{P}^0.
 \end{displaymath}
This is in accordance with the well known fact that  isoparametric
functions reproduce all linear polynomials.
\end{remark}
 We remark that any   restriction on
$\mathcal{V}^1_{\Gamma} $ as stated in Theorem
\ref{thm:trace-for-general-parametrization} affects the approximation
properties of  $ \V^{1}|_{\Omega^{(L)}} \subset \S^{p}_{r}(
\Omega^{(L)})$ and consequently of  $ \V^{1}$ itself. For the sake of
convenience, this is summarized in
the corollary below. 
\begin{corollary}\label{cor:convergence-general-parametrization}
Optimal order of convergence for $h$-refinement can not be achieved  if
 $ \deg  \alpha^{(R)} > 1 $ or  $ \deg  \beta^{(R)} >1$. In
 particular  $C^1$ locking, i.e.  no convergence, occurs  for  $ \deg  \alpha^{(R)} \geq p-r
 $ or  $ \deg  \beta^{(R)} \geq p-r$.
\end{corollary}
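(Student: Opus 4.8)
The plan is to read off the corollary directly from Theorem~\ref{thm:trace-for-general-parametrization}, combined with the observation that $\mathcal{V}^1_\Gamma = \widehat{\mathcal{V}^1_\Gamma}$ in the present setting (since $\f F^{(L)}$ is the identity), and with the fact that the approximation power of $\V^1$ on $\Omega^{(L)}$ is bounded by the approximation power of its trace-and-transversal-derivative space on $\Gamma$. The key point is that optimal $h$-convergence of degree-$p$, $C^r$ isogeometric functions requires that the trace space $\mathcal{G}_0$ contain (at least asymptotically, as $h\to 0$) all of $\S^p_r([0,1])$ and the transversal-derivative space $\mathcal{G}_1$ contain all of $\S^{p-1}_{r}([0,1])$ — or more precisely, spaces of these degrees whose dimension grows like $h^{-1}$ with the correct approximation order. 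So the strategy is: translate the exponents appearing in \eqref{eq:nonASG1-beta-zero} and \eqref{eq:nonASG1-beta-nonzero} into statements about degree and regularity, and check when they drop below the optimal thresholds.

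First I would treat the transversal-derivative factor. By Theorem~\ref{thm:trace-for-general-parametrization}, in both cases the second factor is $\S^{p-p_\alpha}_r([0,1])$, where $p_\alpha = \deg \alpha^{(R)}$. For optimal order we need degree $p-1$ splines here; thus $p-p_\alpha = p-1$, i.e.\ $p_\alpha = 1$, is the borderline, and $p_\alpha > 1$ already costs approximation order. If $p - p_\alpha < 0$, i.e.\ $p_\alpha > p$ — which by the Remark cannot happen here — the space would be empty; but the relevant locking threshold is when the spline space $\S^{p-p_\alpha}_r$ collapses to a fixed finite-dimensional space independent of $h$, which happens exactly when the degree does not exceed the regularity, i.e.\ $p - p_\alpha \le r$, equivalently $p_\alpha \ge p-r$. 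In that regime $\S^{p-p_\alpha}_r([0,1]) = \mathcal{P}^{p-p_\alpha}([0,1])$ has dimension independent of $h$, so no convergence (in the transversal-derivative component, hence for $\V^1$ itself) is possible. The same reasoning applied to the first factor in the $\beta^{(R)}\neq 0$ case gives $\S^{\min\{p,\,p-p_\beta+1\}}_{r+1}$: optimal order needs degree $p$, so $\min\{p,p-p_\beta+1\}=p$, i.e.\ $p_\beta \le 1$, is the borderline, and the space becomes $h$-independent when $\min\{p,p-p_\beta+1\}\le r+1$, which (using $p_\beta\le p$) reduces to $p-p_\beta+1 \le r+1$, i.e.\ $p_\beta \ge p-r$. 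Collecting these, $\deg\alpha^{(R)}>1$ or $\deg\beta^{(R)}>1$ prevents optimal convergence, while $\deg\alpha^{(R)}\ge p-r$ or $\deg\beta^{(R)}\ge p-r$ forces $C^1$ locking.

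The only non-formal step is the link between a restriction on $\mathcal{V}^1_\Gamma$ and the failure of convergence of $\V^1$: one must argue that if the trace (or transversal-derivative) space on $\Gamma$ has dimension bounded independently of $h$, or merely has suboptimal approximation order, then $\V^1|_{\Omega^{(L)}}$ — and hence $\V^1$ — cannot approximate a generic smooth target at the optimal rate, because the restriction-to-$\Gamma$ map is a continuous surjection onto $\mathcal{V}^1_\Gamma$ and a generic smooth function has a trace/normal-derivative pair on $\Gamma$ that is not well-approximated by the constrained space. This is the expected main obstacle, but it is exactly the informal content already signposted in the paragraph preceding the corollary (and it is explicitly stated that detailed error estimates are deferred to a future paper), so here it suffices to invoke it: the dimension-counting of Theorem~\ref{thm:trace-for-general-parametrization} shows the constrained trace spaces are too small, and the corollary follows.
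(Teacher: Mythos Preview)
Your proposal is correct and follows exactly the approach the paper takes: the corollary is stated as a direct summary of Theorem~\ref{thm:trace-for-general-parametrization}, with the paper merely remarking that the restriction on $\mathcal{V}^1_\Gamma$ affects the approximation properties of $\V^1|_{\Omega^{(L)}}$ without providing formal error estimates. Your degree-counting argument (checking when $\S^{p-p_\alpha}_r$ and $\S^{\min\{p,p-p_\beta+1\}}_{r+1}$ lose optimal approximation power or collapse to $h$-independent polynomial spaces) is precisely the intended reading, and your acknowledgment that the link to convergence of $\V^1$ is left informal matches the paper's own stance.
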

 Some examples will be considered
 and further analyzed in Section~\ref{numerical_tests}.

\section{$C^1$ isogeometric spaces on surfaces}
\label{surfaces}

The result of the previous sections can be extended to a a multi-patch
surface $	\Omega $
\begin{equation*}
	\Omega = \Omega^{(1)} \cup \ldots \cup \Omega^{(N)} \subset \RR^3.
\end{equation*}
We can set up an isogeometric function space over the surface
  \begin{equation}\label{eq:Vsurface}
    \mathcal{V} = \left \{ \phi:\Omega \rightarrow
      \mathbb{R}\text{ such that }  \phi \circ \f F^{(i)} \in
  \mathcal{S}^{p}_{r} (\hat \Omega) , i=1,\ldots,N \right \},
  \end{equation}
and again $ \mathcal{V}^{0}=\mathcal{V}\cap C^0(\Omega)$ and
$\mathcal{V}^{1}=\mathcal{V}\cap C^1(\Omega)$. 
As for the planar case, $	\f F^{(i)}:\widehat\Omega\rightarrow \Omega^{(i)}$, though $ \Omega^{(i)} $ is
now a surface patch. More important,  for the function space $C^1(\Omega)$ to be well defined, the
surface $\Omega$ itself needs to be $C^1$, i.e.,  the surface 
needs to have a well defined tangent plane in every point. 
For simplicity,  we focus on a single interface, that is a two-patch geometry, where each patch is parameterized via 
\begin{equation}
  \begin{aligned}
    	\f F^{(L)}&: [-1,0]\times[0,1]=
        \widehat\Omega^{(L)}\rightarrow \Omega^{(L)} \subset \RR^3, \\
	\f F^{(R)}&: [0,1]\times[0,1]=
        \widehat\Omega^{(R)}\rightarrow \Omega^{(R)}\subset \RR^3
  \end{aligned}
\end{equation}
with $\f F^{(S)}\in (\mathcal{S}^{p}_{r} (\widehat\Omega^{(S)}))^3$ for $S\in\{L,R\}$.
We ask  the parameterization to be $G^1$, i.e., there exist $\alpha^{(L)}  : [0,1]
 \rightarrow \RR $, $ \alpha^{(R)} : [0,1] \rightarrow \RR $ and $\beta  : [0,1]
 \rightarrow \RR $ such that $\forall v \in [0,1]$,  $\alpha^{(L)}
 (v) \alpha^{(R)} (v) > 0$ and \begin{equation}
   \label{eq:G1-for-F-surf-alpha-beta-gamma}
  	\alpha^{(R)} (v)    D_u \f F^{(L)}(0,v) -   \alpha^{(L)}(v)
        D_u  \f F^{(R)}(0,v) + \beta (v) D_v  \f F_{0}(v) 
=\boldsymbol{0}.
 \end{equation}
Then, the surface gradient of a function $\phi:\Omega \rightarrow \RR$ can be computed as follows. First, the surface $\Omega$ is extended to $\Omega_\epsilon\subset\RR^3$ by defining 
\begin{equation}
  \f G^{(S)} :  \widehat\Omega^{(S)} \times \left]-\epsilon,\epsilon\right[ \rightarrow \Omega^{(S)}_\epsilon
\end{equation}
with 
\begin{equation}
  \f G^{(S)}(u,v,w) = \f F^{(S)}(u,v) + w \f N^{(S)}(u,v),
\end{equation}
where $\f N^{(S)}$ is the unit normal vector of $\Omega^{(S)}$. Now, we define the extension $\Phi$ of the function $\phi$ via $\hat\Phi^{(S)}(u,v,w) = \hat\phi^{(S)}(u,v)$ for $S\in\{L,R\}$. The surface gradient of $\phi$ is then given as the three-dimensional gradient of the extension $\Phi$ in $\Omega_\epsilon$ restricted to the surface $\Omega$, i.e. 
\begin{equation}
  \nabla_\Omega \phi = \nabla \Phi |_{\Omega} ,
\end{equation}
where
\begin{equation}
  \nabla \Phi \circ \f G^{(S)} = \nabla \hat\Phi^{(S)} \cdot (\nabla \f G^{(S)})^{-1}\; \mbox{ on }\widehat\Omega^{(S)}.
\end{equation}
By construction, the gradient is tangential to the surface.

Proposition \ref{teo:C1=G1-graph-parametrization} can be generalized to surface
domains. For the sake of simplicity we consider a simplified
case, by assuming that $\f F^{(S)}$ can be projected onto the $(x,y)$-plane without self intersections.  Note that this is not a limitation of the concept but facilitates the following propositions. Then we have 
\begin{equation}\label{eq:projectable-surface-parametrization}
  \f F^{(S)} = (F^{(S)}_1,F^{(S)}_2,F^{(S)}_3)^T=  (\f P^{(S)} , f^{(S)}_3 \circ \f P^{(S)})^T
\end{equation}
where $\f P^{(S)} = (F^{(S)}_1,F^{(S)}_2)^T$ is the planar
parameterization of the projected surface and $f_3$ is an  (isogeometric)
function from the planar projection $\bar \Omega^{(S)} = \f
P^{(S)}(\widehat\Omega^{(S)}) $ to  $\RR$. Then the following
result is a direct corollary of Proposition
\ref{teo:C1=G1-graph-parametrization},  see also \cite{groisser2015matched}. 
\begin{proposition}
  An isogeometric function $\phi \in \mathcal{V}$ as in \eqref{eq:Vsurface} belongs to $
  \mathcal{V}^1$ if and only if its graph $\Sigma$ as a 2-dimensional surface in $\RR^4$ is $G^1$-continuous at each interface $\Sigma^{(i)} \cap
  \Sigma^{(j)} $.
\end{proposition}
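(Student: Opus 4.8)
The plan is to deduce the statement from the planar case, Proposition~\ref{teo:C1=G1-graph-parametrization}, by pushing everything down to the $(x,y)$-plane via the decomposition \eqref{eq:projectable-surface-parametrization}. First I would set $\bar\Omega=\f P^{(L)}(\widehat\Omega^{(L)})\cup\f P^{(R)}(\widehat\Omega^{(R)})\subset\RR^2$ and observe that, under the projectability assumption, $\bar\Omega$ together with the maps $\f P^{(S)}$ inherits the regularity and gluing required of a planar $C^0$ multi-patch domain in the sense of Section~\ref{preliminaries}. On $\bar\Omega$ there live two isogeometric functions: the height $f_3\in\mathcal{V}(\bar\Omega)$, with $f_3\circ\f P^{(S)}\in\mathcal{S}^p_r$ by \eqref{eq:projectable-surface-parametrization}, and the pull-back $\bar\phi$ defined by $\bar\phi\circ\f P^{(S)}=\phi\circ\f F^{(S)}$, which lies in $\mathcal{V}(\bar\Omega)$ since $\phi\circ\f F^{(S)}\in\mathcal{S}^p_r$. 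The graph of $f_3$ over $\bar\Omega$ is exactly $\Omega$, so the standing hypothesis that $\Omega$ is a $C^1$ surface — encoded by \eqref{eq:G1-for-F-surf-alpha-beta-gamma} — amounts, by Proposition~\ref{teo:C1=G1-graph-parametrization}, to $f_3\in\mathcal{V}^1(\bar\Omega)$. In particular the vertical projection $\pi:\RR^3\to\RR^2$, $(x,y,z)\mapsto(x,y)$, restricts to a $C^1$-diffeomorphism $\Omega\to\bar\Omega$ whose inverse $\bar x\mapsto(\bar x,f_3(\bar x))$ is $C^1$; hence $\phi\in\mathcal{V}^1(\Omega)$ if and only if $\bar\phi=\phi\circ\pi^{-1}\in\mathcal{V}^1(\bar\Omega)$.

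Next I would reparametrize $\Sigma$ over $\bar\Omega$: it becomes the joint graph $\{(\bar x,f_3(\bar x),\bar\phi(\bar x)):\bar x\in\bar\Omega\}\subset\RR^4$, carrying on each patch the parametrization $(\f P^{(S)},f_3\circ\f P^{(S)},g^{(S)})$ with $g^{(S)}=\bar\phi\circ\f P^{(S)}$. Writing the $G^1$ relation for $\Sigma$ componentwise, its first two scalar equations are precisely \eqref{eq:G1-for-F-alone-alpha-beta-gamma} for $\f P^{(L)},\f P^{(R)}$; since the $\f P^{(S)}$ are regular, Proposition~\ref{prop:alpha-beta-gamma-are-splines} shows these already determine $\alpha^{(L)},\alpha^{(R)},\beta$ up to a common scalar factor $\gamma$, which must be nowhere zero by the sign condition. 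For such coefficients the third scalar equation is \eqref{eq:G1-condition-for-g} for the graph of $f_3$, which holds because $f_3\in\mathcal{V}^1(\bar\Omega)$, while the fourth is \eqref{eq:G1-condition-for-g} for $g^{(S)}$, i.e.\ the $G^1$ condition for the graph of $\bar\phi$ over $\bar\Omega$. Therefore $\Sigma$ is $G^1$ at $\Sigma^{(i)}\cap\Sigma^{(j)}$ if and only if the graph of $\bar\phi$ is $G^1$ at the corresponding interface of $\bar\Omega$, which by Proposition~\ref{teo:C1=G1-graph-parametrization} is equivalent to $\bar\phi$ being $C^1$ across that interface. Ranging over all interfaces and combining with the first paragraph yields $\phi\in\mathcal{V}^1(\Omega)\iff\Sigma$ is $G^1$ at every interface.

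The only genuinely delicate point is the equivalence $\phi\in C^1(\Omega)\iff\bar\phi\in C^1(\bar\Omega)$: it hinges on $\pi|_\Omega$ being a $C^1$-diffeomorphism, which is exactly where the assumption that the geometry itself is $G^1$ (equivalently $f_3\in C^1$) enters, and it is also the reason the projectability simplification \eqref{eq:projectable-surface-parametrization} is convenient — without it one would instead have to argue intrinsically, replacing $\pi$ by local $C^1$ charts of $\Omega$ and tracking the surface gradient $\nabla_\Omega\phi$ constructed above. Everything else is componentwise bookkeeping of \eqref{eq:G1-for-F-surf-alpha-beta-gamma} together with direct appeals to Propositions~\ref{prop:alpha-beta-gamma-are-splines} and~\ref{teo:C1=G1-graph-parametrization}, which is why the result is a direct corollary.
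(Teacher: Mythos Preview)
Your proposal is correct and follows exactly the route the paper indicates: the paper does not give a detailed proof but states that the result is ``a direct corollary of Proposition~\ref{teo:C1=G1-graph-parametrization}'' under the projectability assumption \eqref{eq:projectable-surface-parametrization}, and your argument spells out precisely this reduction---pushing $\phi$ down to $\bar\phi$ on the planar projection $\bar\Omega$, splitting the four-component $G^1$ relation for $\Sigma\subset\RR^4$ into the planar geometry equations, the (automatically satisfied) height equation for $f_3$, and the graph equation for $\bar\phi$, and then invoking Proposition~\ref{teo:C1=G1-graph-parametrization} twice.
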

The definition for a $G^1$-continuous  graph of an isogeometric function
$\phi$  is formally the
same as in  Definition \ref{def:G1}, however now  the graph of $\phi:
\Omega \rightarrow \mathbb{R}$ is a two-dimensional surface in $\RR^4$. 
As for planar domains considered in Section~\ref{sec:C1-isogeometric-spaces}, the coefficients for the $G^1$
condition are still determined by  the parameterization $\f F^{(S)}$,
in fact by  two of its three components.  In our simplified case, we
are in the following situation. 
\begin{proposition}\label{prop:coeff-surf-case}
Consider surface patches  
\[
	\f F^{(L)} = (\f P^{(L)} , F^{(L)}_3)^T, \; \f F^{(R)} = (\f P^{(R)}, F^{(R)}_3)^T
\]
and functions $ g^{(L)} $, $ g^{(R)} $ fulfilling Definition
\ref{def:G1}. Then  the coefficients $\alpha^{(L)}$, 
$\alpha^{(R)}$ and $\beta$ in  
\eqref{eq:G1-for-F-surf-alpha-beta-gamma} are given by $\alpha^{(S)} (v)  = \gamma(v) 	\bar \alpha^{(S)} (v)
$,  for $S \in \{L,R\}$,  and  $\beta (v)  = \gamma(v) 	\bar \beta
(v) $, where
\begin{equation}
     	\bar   	\alpha^{(S)} (v)=  \det \left[ \begin{array}{ll}
	 D_u \f P^{(S)} (0,v) & D_v \f P^{(S)}
        (0,v)
	\end{array}	 \right],
  \end{equation}
 \begin{equation}
   	\bar  	\beta (v) =  	\det \left[ \begin{array}{ll}
	 D_u \f P^{(L)} (0,v)& 	 D_u \f P^{(R)} (0,v) 
	\end{array}
	 \right],
  \end{equation}
and  $\gamma:[0,1]\rightarrow \RR $ is any  scalar function. In addition, $\gamma(v) \neq 0$ if
and only if $\alpha^{(L)}  (v) \alpha^{(R)} (v) > 0$. Moreover, there exist functions $\beta^{(S)}(v)$, for 
$S\in \{L,R\}$, such that 
\begin{equation}
  \beta (v)= \alpha^{(L)} (v) \beta^{(R)}(v)- \alpha^{(R)} (v)\beta^{(L)}(v). 
\end{equation}
\end{proposition}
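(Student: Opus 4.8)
The plan is to reduce the statement to Proposition~\ref{prop:alpha-beta-gamma-are-splines}, applied to the \emph{planar} parametrizations $\f P^{(L)}$, $\f P^{(R)}$ obtained by projecting the surface patches onto the $(x,y)$-plane. By hypothesis $g^{(L)}$, $g^{(R)}$ fulfill Definition~\ref{def:G1}, so there exist $\alpha^{(L)}$, $\alpha^{(R)}$, $\beta$ satisfying \eqref{eq:G1-for-F-surf-alpha-beta-gamma} together with the sign condition \eqref{eq:alpha-sign-condition}. Write $\f F^{(S)} = (\f P^{(S)}, F^{(S)}_3)^T$ as in \eqref{eq:projectable-surface-parametrization} and set $\f P_0(v) = \f P^{(L)}(0,v) = \f P^{(R)}(0,v)$, so that $D_v \f P^{(S)}(0,v) = D_v \f P_0(v)$ by the matching of the patches at $u=0$. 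Then the first two scalar components of \eqref{eq:G1-for-F-surf-alpha-beta-gamma} read
\begin{displaymath}
  \alpha^{(R)}(v)\, D_u \f P^{(L)}(0,v) - \alpha^{(L)}(v)\, D_u \f P^{(R)}(0,v) + \beta(v)\, D_v \f P_0(v) = \boldsymbol{0},
\end{displaymath}
which is exactly equation \eqref{eq:G1-for-F-alone-alpha-beta-gamma} with $\f P^{(S)}$, $\f P_0$ in the roles of $\f F^{(S)}$, $\f F_0$.

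The projectability hypothesis guarantees that each $\f P^{(S)}$ is a regular, consistently oriented planar parametrization; in particular $D_u \f P^{(S)}(0,v)$ and $D_v \f P_0(v)$ are linearly independent (the analogue of \eqref{eq:non-singular-F}). Hence, exactly as in the proof of Proposition~\ref{prop:alpha-beta-gamma-are-splines}, $\alpha^{(L)}$, $\alpha^{(R)}$, $\beta$ are uniquely determined up to a common scalar factor $\gamma(v)$ by the displayed two-equation system. Running the Laplace-expansion argument of that proof verbatim --- i.e.\ the analogue of \eqref{eq:1} with $\f F$ replaced by $\f P$, expanded along the last row --- identifies $\alpha^{(S)} = \gamma\,\bar\alpha^{(S)}$ and $\beta = \gamma\,\bar\beta$ with $\bar\alpha^{(S)}$, $\bar\beta$ the $2\times 2$ determinants claimed (the analogues of \eqref{eq:alphas-wrt-F} and \eqref{eq:beta-wrt-F}). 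The equivalence $\gamma(v)\neq 0 \Leftrightarrow \alpha^{(L)}(v)\alpha^{(R)}(v)>0$ then follows as before, since $\bar\alpha^{(L)}$ and $\bar\alpha^{(R)}$ have a common fixed sign by the consistent orientation of the $\f P^{(S)}$, so that $\alpha^{(L)}\alpha^{(R)} = \gamma^2\,\bar\alpha^{(L)}\bar\alpha^{(R)}$.

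For the factorization $\beta = \alpha^{(L)}\beta^{(R)} - \alpha^{(R)}\beta^{(L)}$ I would repeat the final part of the proof of Proposition~\ref{prop:alpha-beta-gamma-are-splines}: existence of such $\beta^{(S)}$ is immediate from the sign condition \eqref{eq:alpha-sign-condition}, and the distinguished choice
\begin{displaymath}
  \beta^{(S)}(v) = \frac{D_u \f P^{(S)}(0,v)\cdot D_v \f P_0(v)}{\|D_v \f P_0(v)\|^2}
\end{displaymath}
works via the same expansion of an arbitrary planar vector in the $(\bnu,\btau)$ basis built from $D_v \f P_0$, now with $\f P$ in place of $\f F$. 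The only point requiring genuine care --- and the place where the ``projectable without self-intersection'' assumption in \eqref{eq:projectable-surface-parametrization} is used --- is the reduction step itself: one must check that regularity and the $u=0$ matching of the planar factors $\f P^{(S)}$ are really inherited from the surface data, and observe that the third component of \eqref{eq:G1-for-F-surf-alpha-beta-gamma} (the one involving $F^{(S)}_3$) does not enter the determination of $\alpha^{(L)}$, $\alpha^{(R)}$, $\beta$ --- it is rather the genuine $G^1$ constraint on the surface, which holds by hypothesis. Beyond this bookkeeping the statement is, as the text announces, a direct corollary of the planar result and presents no new difficulty.
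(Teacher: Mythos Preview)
Your proposal is correct and follows exactly the route the paper intends: the paper itself omits the details and states that the result is a direct consequence of Proposition~\ref{prop:alpha-beta-gamma-are-splines}, which is precisely what you carry out by restricting \eqref{eq:G1-for-F-surf-alpha-beta-gamma} to its first two components and applying the planar argument to $\f P^{(L)}$, $\f P^{(R)}$. Your additional bookkeeping remarks (on regularity of $\f P^{(S)}$ from the projectability assumption and on the role of the third component) are appropriate and go slightly beyond what the paper spells out.
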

Again, we omit the details of the proof, as it is a direct consequence of Proposition \ref{prop:alpha-beta-gamma-are-splines}. 

In analogy to the planar case,    to guarantee optimal approximation properties of the
isogeometric space  we ask some structure for  the trace and derivative
trace  of isogeometric functions, by restricting the  geometry
parametrization to  the class of  analysis-suitable $G^1$-continuous functions. 
The definition is omitted being exactly  the same as
Definition \ref{def:AS-G1}. In the case of 
\eqref{eq:projectable-surface-parametrization}, the parametrization
$\f P^{(S)} $ of the planar projection $\bar \Omega$ of the geometry $\Omega$ needs to be AS
$G^1$. Furthermore, the third component $F^{(S)}_3$ of $\f F^{(S)}$ as
well as the function $\hat\phi^{(S)}$ have to fulfill the same
conditions, for $S\in\{L,R\}$. This means that Remarks
\ref{rem:C1-isoparametric-on-G1-surfaces}--\ref{rem:C1-isoparametric-on-ASG1-surfaces}
extend to surfaces:   $C^1$ isogeometric spaces over  $G^1$  and AS $G^1$  multi-patch
 parametrizations fit in the  isoparametric framework. 
  Eventually, $\phi\in \V^1(\Omega)$ if and only if $\bar
\phi\in \V^1(\bar \Omega)$, where $\bar
\phi \circ \f P^{(S)} =  \phi \circ \f F^{(S)}$. Then, results   of Section~\ref{two-patch} apply directly to AS $G^1$  surfaces, at least  when
the  geometry parametrization fulfills
\eqref{eq:projectable-surface-parametrization}. 

\begin{remark}
 As a consequence of Proposition~\ref{prop:coeff-surf-case},
 $C^1$ isogeometric spaces over $G^1$ multi-patch surface
 parametrizations fit in the  isoparametric framework. 
\end{remark}

\section{NURBS spaces}
\label{sec:beyond-splines}

The results presented in Sections
\ref{sec:C1-isogeometric-spaces}--\ref{two-patch} are not limited to
spline spaces, but, for example, are  generalizable  to NURBS. 
One possibility is to generate rational planar  geometries from AS
$G^1$ surface patches.  A rational patch 
\begin{equation}
    	\f F^{(i)}: \widehat\Omega\rightarrow \Omega^{(i)} \subset \RR^2
\end{equation}
with 
\begin{equation}\label{eq:rational}
    	\f F^{(i)} = \left( \frac{F^{(i)}_1}{F^{(i)}_3}, \frac{F^{(i)}_2}{F^{(i)}_3} \right)^T
\end{equation}
can be interpreted as a surface patch in $\RR^3$, with 
\begin{equation}\label{eq:surf}
    	\widetilde{\f F}^{(i)} = \left( F^{(i)}_1, F^{(i)}_2, F^{(i)}_3 \right)^T,
\end{equation}
when transformed to homogeneous coordinates. Recall, that two points $\widetilde{\f F}$, $\widetilde{\f F}^\prime$ in homogeneous coordinates correspond to the same point in Euclidean space, if there exists a $\lambda \neq 0$ such that $\widetilde{\f F} = \lambda \widetilde{\f F}^\prime$. 
An AS $G^1$ surface with surface patches \eqref{eq:surf} generates a
rational AS $G^1$ geometry via \eqref{eq:rational}. Therefore this
construction  fits into the framework of Section~\ref{surfaces}. For the sake of
brevity, we only present one example (Figure
\ref{fig:QuartOfCircleResults}) in the next Section
\ref{sec:multi-patch-ASG1} and postpone  further studies. 
We want to point out that not all multi-patch NURBS geometries fit into this framework. 
The surface representation in \eqref{eq:surf} may not be $G^1$ (or not even $C^0$) but the rational 
patches representing the graph still join $G^1$. Such a configuration is given for the circle presented in Figure~\ref{fig:CircleResults}. 
However, in that example the graph surface is not analysis-suitable $G^1$. 

\section{Numerical tests}
\label{numerical_tests}

In this section, we present numerical tests which illustrate the previous theoretical results. The simulations have been obtained by
the isogeometric analysis library IGATOOLS, see \cite{pauletti-martinelli-cavallini-14} for an introduction.  For related numerical studies see \cite{nguyen2014comparative,kapl-vitrih-juttler-birner-15,nguyen2016,kapl-buchegger-bercovier-juttler-16}. 

\subsection{Model problem}
We  consider the following bilaplacian problem on $\Omega \subset
\RR^2$, where the unknown is denoted by $\disp$ and the data by
$\force$, 
\begin{equation}
	\label{eq:biharmonicPb}
	\left\{
	\begin{array}{rcll}
		\Delta^2 \, \disp & = & \force &  \Body, \\
		\disp & = & 0 &  \partial\Body, \\
		{\nabla} \disp \cdot \normal & = & 0 & \partial\Body. \\
 	\end{array}
	\right.
\end{equation}
In our tests the data $\force$ is selected in order to have an
analytic exact solution $\disp$.  Here $\normal$ is the unit normal vector to the boundary $\partial \Omega$. 
Let 
\[
	\mathcal{U}_0 = \{\dispTF \in  H^2(\Body), \dispTF= 0 \text{ on }\partial \Omega \text{ and } {\nabla} \dispTF \cdot \normal = 0 \text{ on }\partial \Omega\},
\]
the variational form of \eqref{eq:biharmonicPb} is, find $\disp \in \mathcal{U}_0$, such that 
\begin{eqnarray} \label{eq:biharmonicPbVarForm}
	\int_{\Body} \Delta \disp \Delta \dispTF \;  \mathrm{d}x \, \mathrm{d}y = \int_{\Body} \force \dispTF \;  \mathrm{d}x \, \mathrm{d}y , \; \forall \dispTF \in \mathcal{U}_0.
\end{eqnarray}

In what follows, we consider two types of geometries, the ones which are analysis-suitable $G^1$ and the ones which are not. Let us start with the analysis-suitable $G^1$ geometries.

\subsection{Analysis-suitable $G^1$ geometries}

We consider five different analysis-suitable $G^1$ geometry
mappings with different numbers of patches. For all cases, we select
the degrees $p = 3, 4, 5$ and  regularities $1 \leq r \leq p-1$.

\subsubsection{Two-patch geometry (L-shape)}

\begin{figure}[h!]
	\centering
		\includegraphics[width=\textwidth]{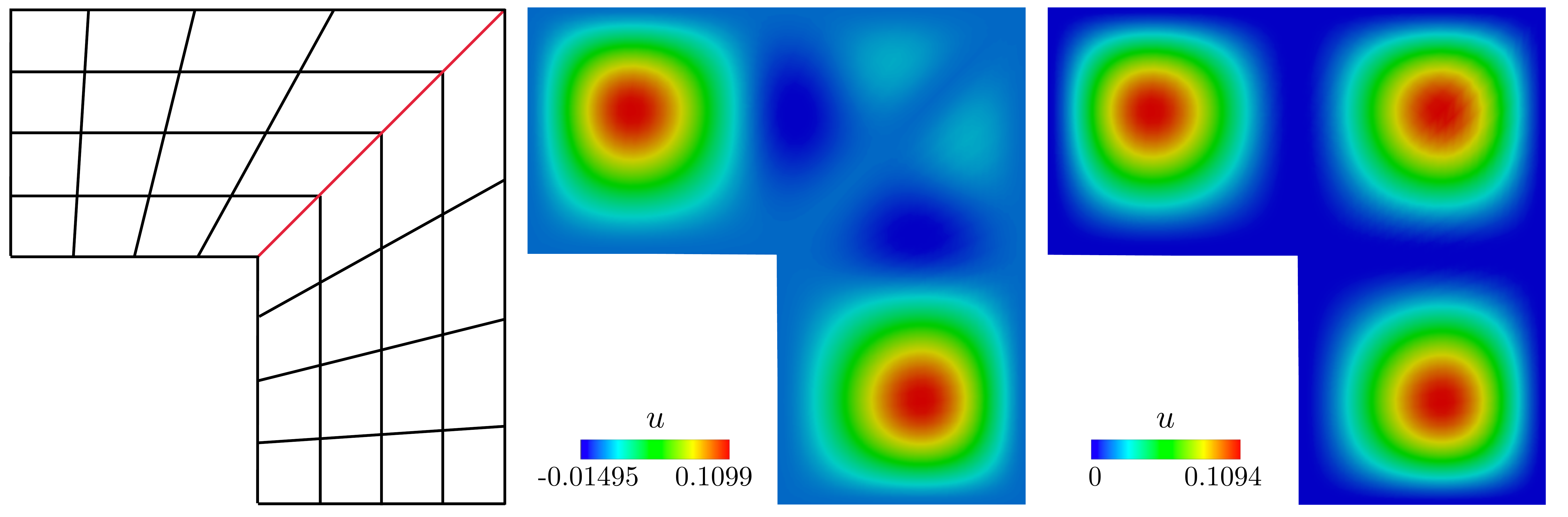}
	\caption{Two-patch L-shaped domain (left), a solution
          affected by $C^1$ locking  for $p = 3$, $r = 2$ (center),
          and a correct numerical solution for $p = 3$, $r = 1$ (right).}
	\label{fig:LResults}
\end{figure}

We start with the L-shaped geometry consisting of two patches. The L-shaped domain
is given in Figure~\ref{fig:LResults} (left), where the red edge is the
common edge of both patches. Here the parametrization of both patches is bilinear. This is
obviously  an AS $G^1$ geometry (recall Proposition \ref{prop:bilinear}). Using
Theorem~\ref{thm:trace-for-AS-G1}, the optimal convergence is achieved
if and only if $r \leq p-2$,  in agreement with the results of
Section \ref{AS-G1-parametrization}. If we focus on
degree $p=3$,  $C^1$ locking is evident when the regularity equals to $r=p-1 = 2$, see Figure~\ref{fig:LResults} (center). In particular, the solution on the interface line equals to $0$. In order to circumvent $C^1$ locking, we decrease the regularity, see Figure~\ref{fig:LResults} (right). Figure~\ref{fig:LResultsConv} gives the convergence curves for degrees $p = 3, 4$ and~$5$. We obtain expected results for all degrees, \emph{i.e.} we have convergence (which is optimal) if and only if $r+2 \leq p$. 

\begin{figure}[h!]
	\centering
	\begin{tabular}{c}
		\includegraphics[width=0.5\textwidth]{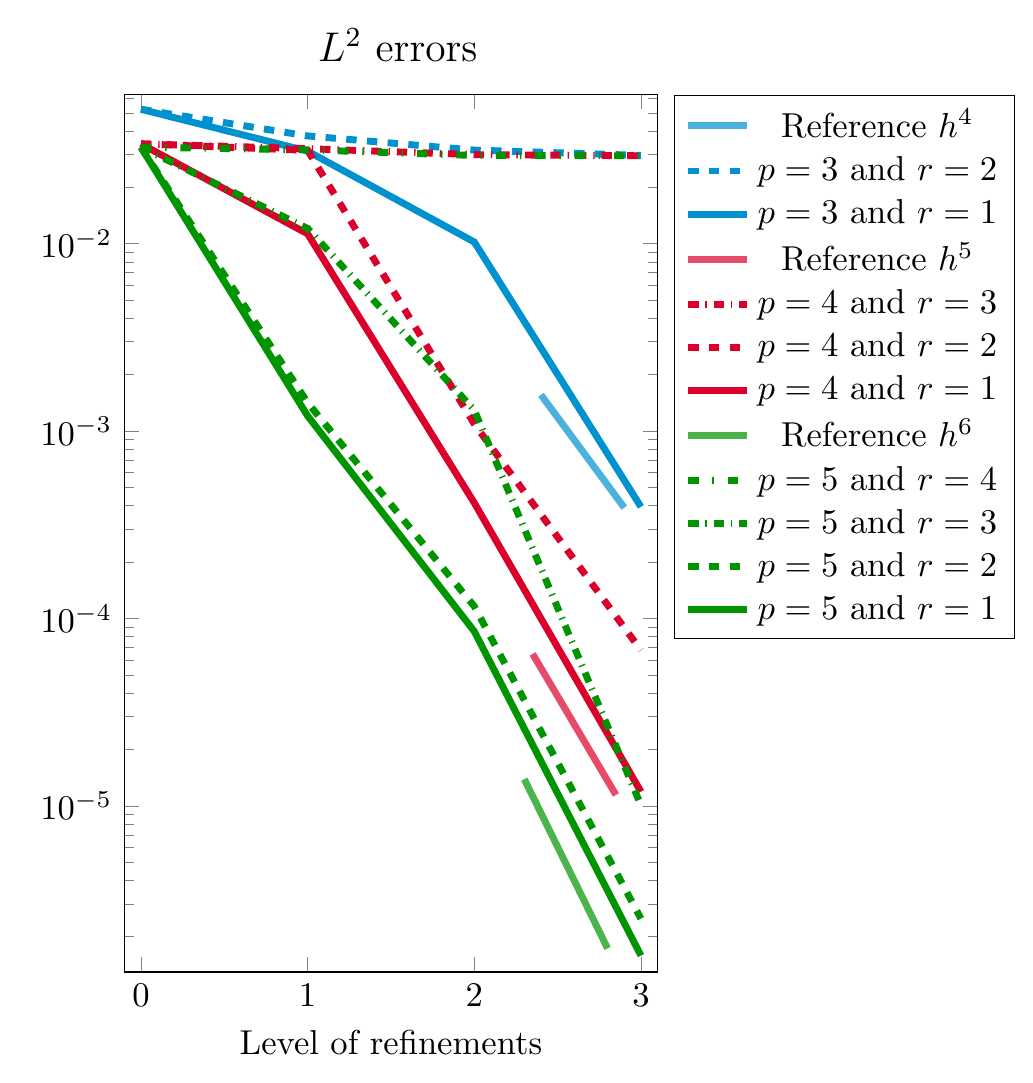} 
	\end{tabular}
	\caption{Convergence results for the L-shaped domain.}
	\label{fig:LResultsConv}
\end{figure}

\subsubsection{Multi-patch geometries (triangle, quarter of a circle and rectangle)}
\label{sec:multi-patch-ASG1}

\begin{figure}[h!]
	\centering
		\includegraphics[width=\textwidth]{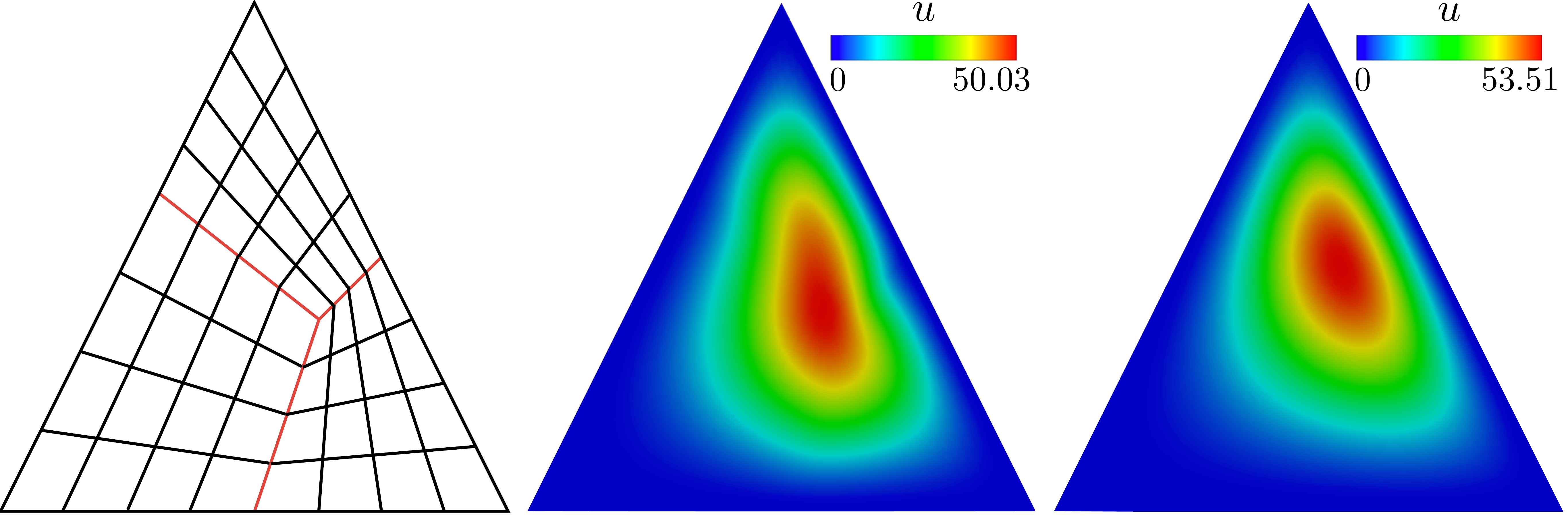}
	\caption{Three-patch triangle (left), a solution
          affected by $C^1$ locking for $p = 3$, $r = 2$ (center), and a correct numerical solution for $p = 3$, $r = 1$ (right).}
	\label{fig:TriangleResults}
\end{figure}

\begin{figure}[h!]
	\centering
		\includegraphics[width=\textwidth]{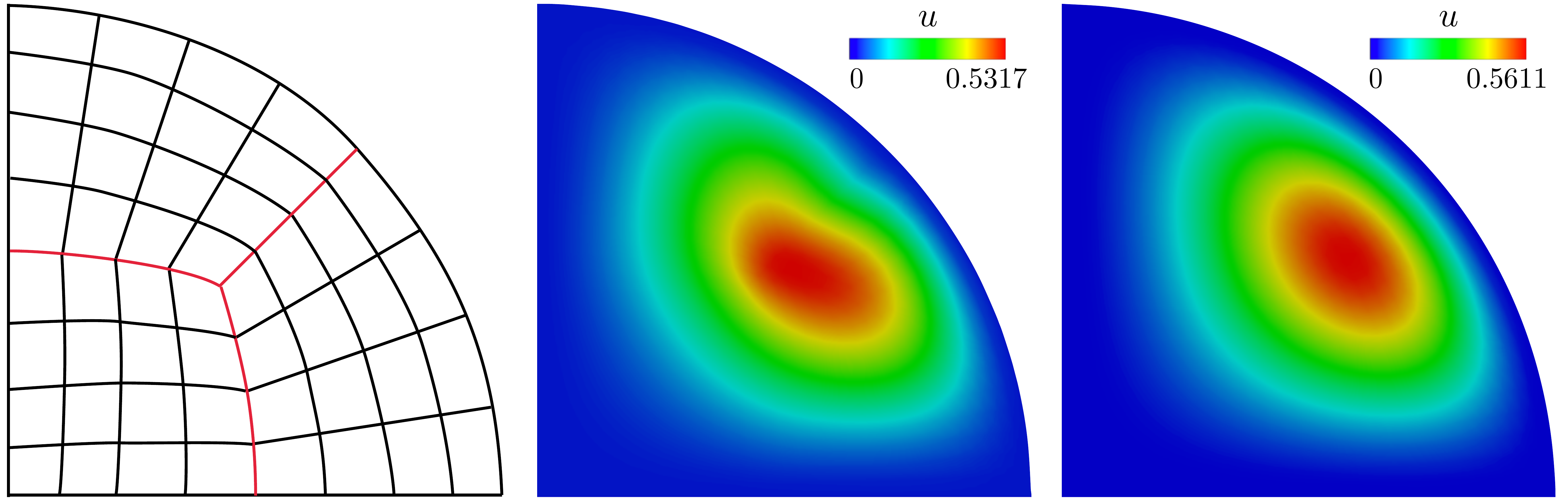}
	\caption{Three-patch quarter of a circle (left), a solution
          affected by $C^1$ locking for $p = 3$, $r = 2$ (center), and a correct numerical solution for $p = 3$, $r = 1$ (right).}
	\label{fig:QuartOfCircleResults}
\end{figure}

In what follows, we consider cases with three or more patches, starting with an example with
three bilinear patches forming a triangle (see
Figure~\ref{fig:TriangleResults} (left))  and another with three bi-quadratic
patches forming the quarter of a circle (see
Figure~\ref{fig:QuartOfCircleResults} (left)). Both are  AS $G^1$ geometries. 
Note that the quarter of the circle is composed of NURBS  patches, and
is obtained following Section~\ref{sec:beyond-splines} construction,
from a  geometry parametrization that is  an AS $G^1$ surface in
homogeneous coordinates. The details of this construction are
presented in the appendix.  Figures~\ref{fig:TriangleResults} (center) and
\ref{fig:QuartOfCircleResults} (center) show $C^1$ locking which appears with $p = 3$ and $r = 2$ that we can circumvent by reducing the regularity, see  Figures~\ref{fig:TriangleResults} (right) and \ref{fig:QuartOfCircleResults} (right). The expected convergence orders are given in Figure~\ref{fig:3PatchesConv}.

\begin{figure}[h!]
	\centering
	\begin{tabular}{cc}
		\includegraphics[width=0.5\textwidth]{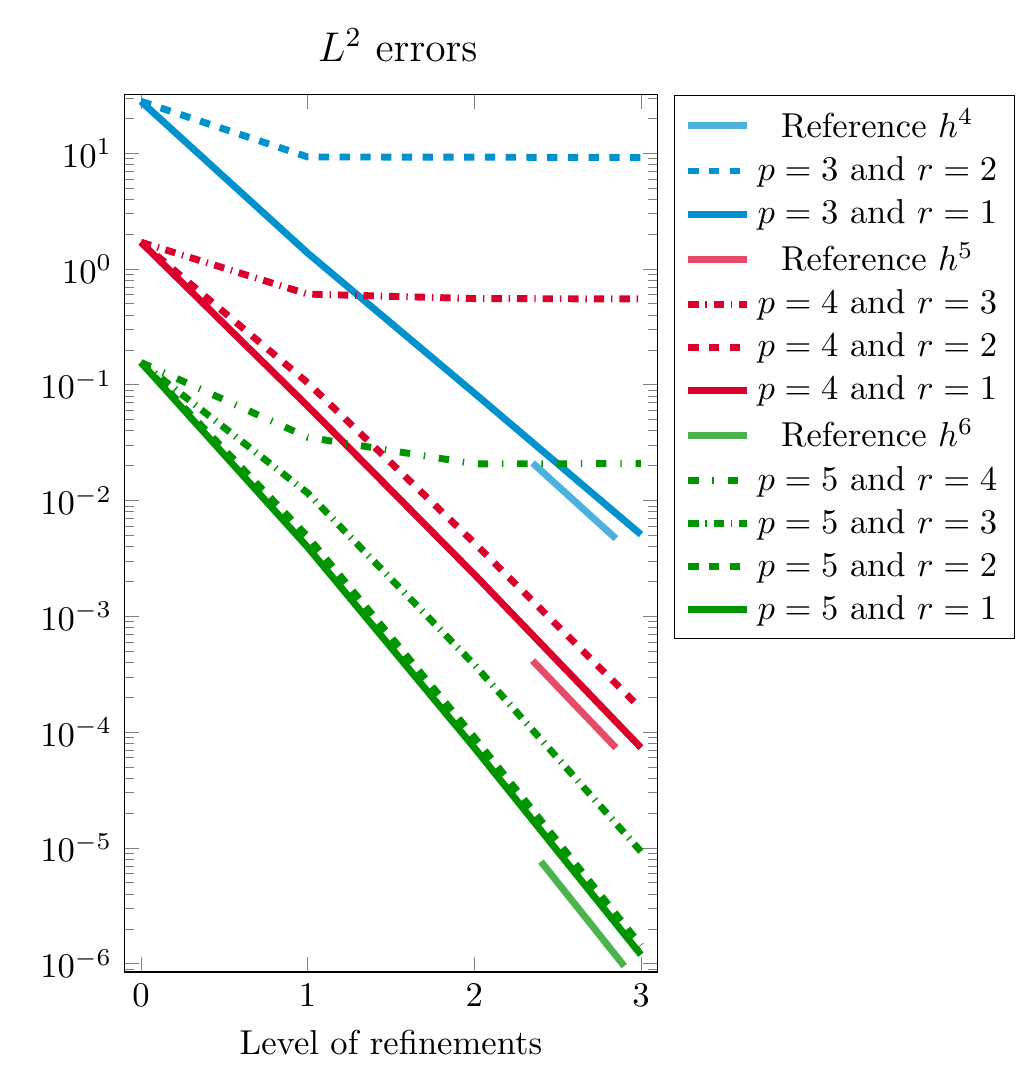} & 
		\includegraphics[width=0.5\textwidth]{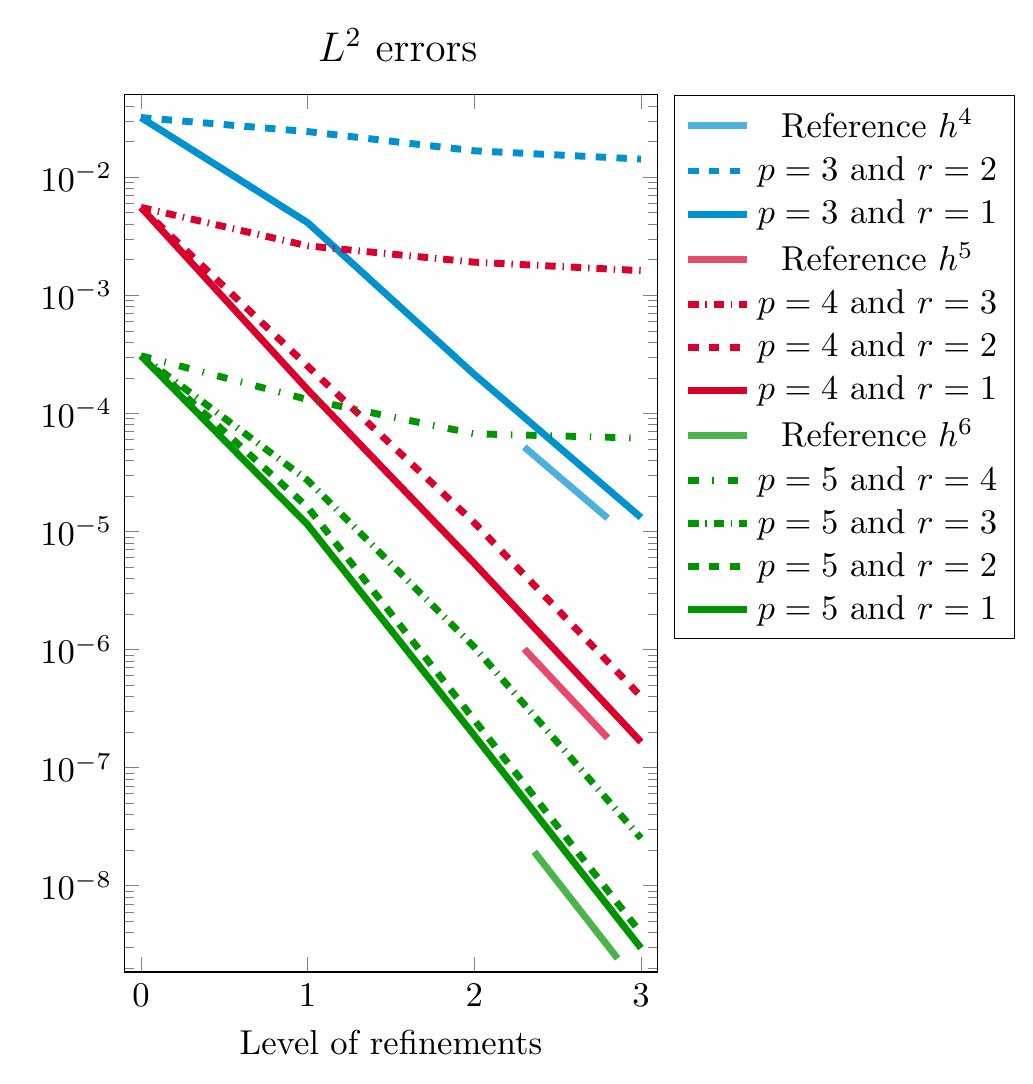} \\
	\end{tabular}
	\caption{Convergence results for the triangle (left) and the quarter of circle (right).}
	\label{fig:3PatchesConv}
\end{figure}

\begin{figure}[h!]
	\centering
		\includegraphics[width=\textwidth]{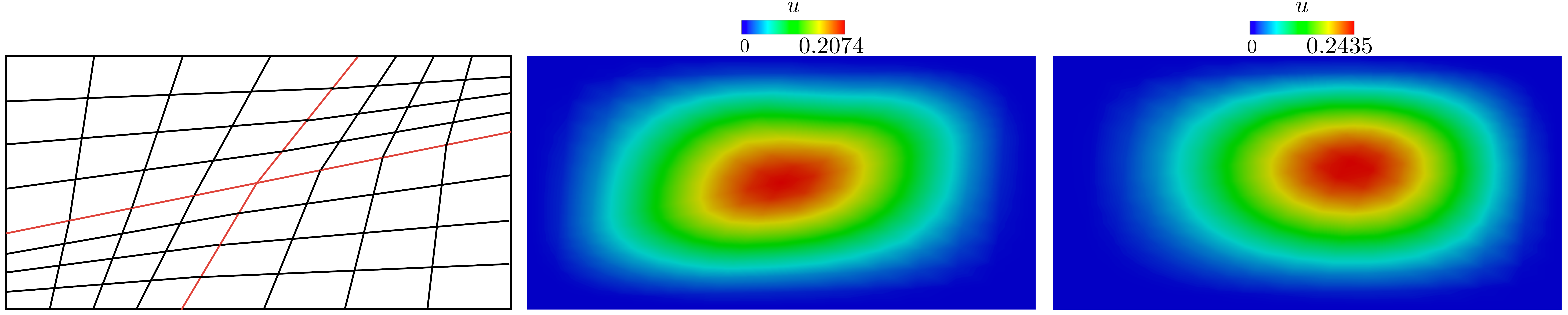}
	\caption{Four-patch rectangle (left), a solution
          affected by $C^1$ locking for $p = 3$, $r = 2$ (center), and a correct numerical solution for $p = 3$, $r = 1$ (right).}
	\label{fig:RectangleResults}
\end{figure}

Next we consider a rectangle composed of four patches, see
Figure~\ref{fig:RectangleResults} (left). As shown in this figure, we
consider a particular case where two interfaces are collinear. 
This configuration is among the ones analyzed in \cite[Section
11.2.3]{bercovier2014smooth}. The results are presented in
Figure~\ref{fig:RectangleResults} for degree $p=3$ and regularity $r =
2$ (center) and $r = 1$ (right). Figure~\ref{fig:3and5PatchesConv}
(left) gives the convergence orders, which are as expected.  
\begin{figure}[h!]
	\centering
		\includegraphics[width=\textwidth]{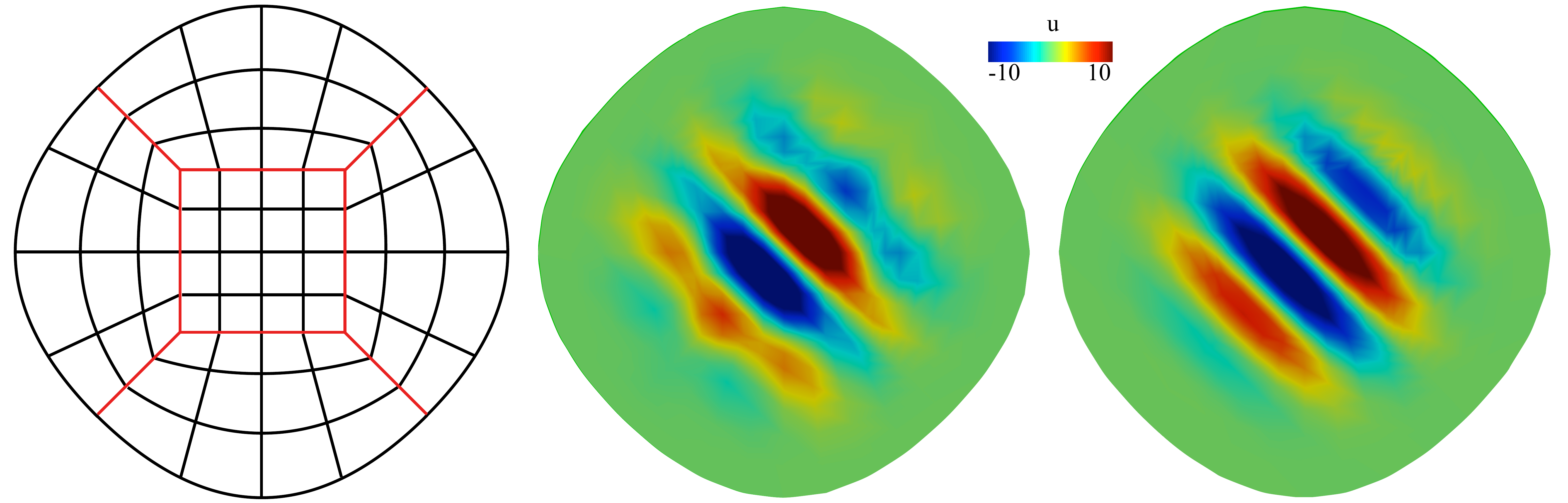}
	\caption{Five-patch simply-connected
domain with smooth boundary (left), a solution
          affected by $C^1$ locking for $p = 3$, $r = 2$ (center), and a correct numerical solution for $p = 3$, $r = 1$ (right).}
	\label{fig:5PatchSmoothResults}
\end{figure}

The  final and most relevant example of AS $G^1$
geometry is reported in Figure~\ref{fig:5PatchSmoothResults}
(left). This is a  five patch decomposition of a simply-connected
domain with smooth boundary. The parametrization of each patch is
bi-quadratic, and the domain boundary  is $C^1$. Given the boundary
control points and parametrization, the interior contol points have been selected in
order to fulfil the AS $G^1$ conditions. Unlike \cite{peters-PhD},
here not only the mesh curves, i.e., patch
interfaces, but also their parametrization need to be chosen properly. We refer to the
Appendix for the complete description.  
The results are presented in
Figure~\ref{fig:5PatchSmoothResults} for degree $p=3$ and regularity $r = 2$ (center) and $r = 1$ (right).  Figure~\ref{fig:3and5PatchesConv} (right) gives the convergence orders, which are again as expected. 

\begin{figure}[h!]
	\centering
	\begin{tabular}{cc}
		\includegraphics[width=0.5\textwidth]{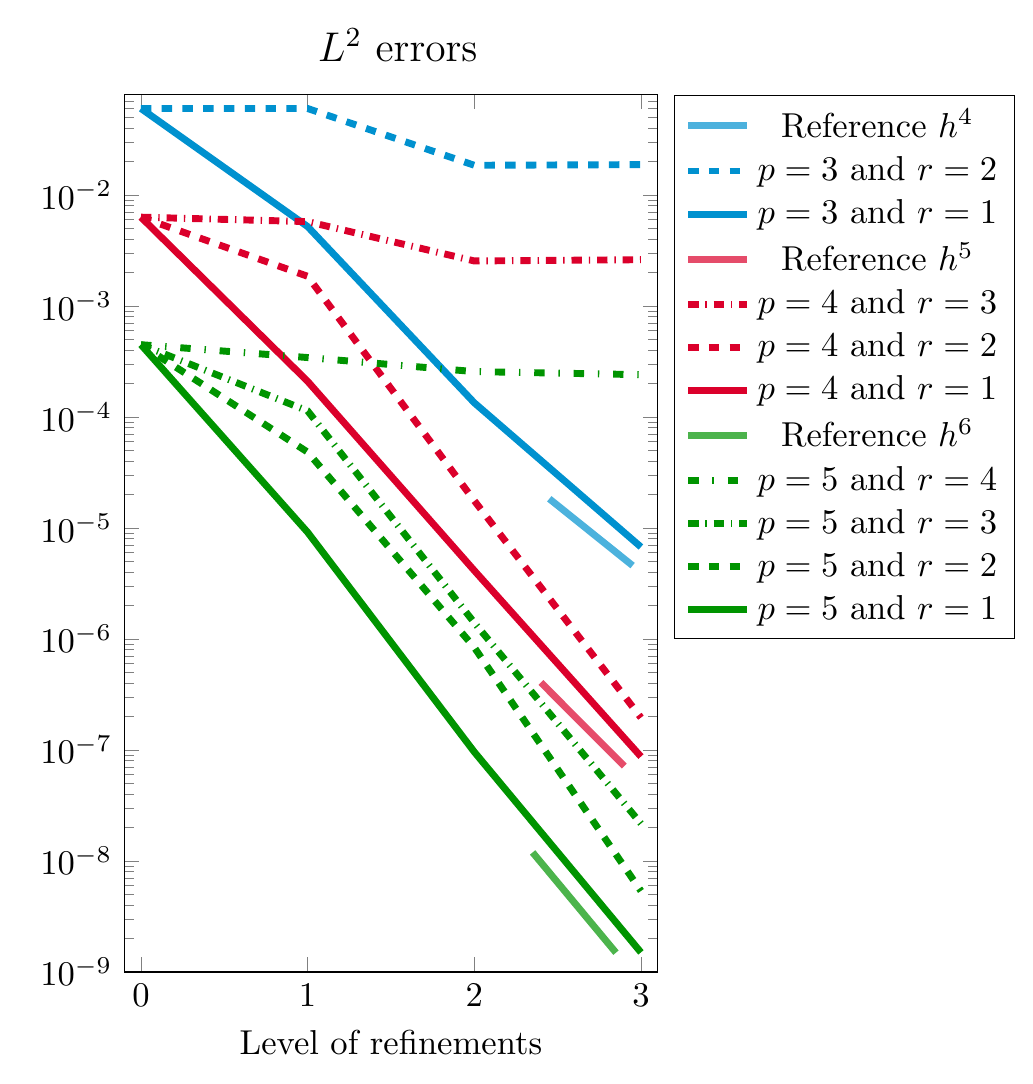} & 
		\includegraphics[width=0.5\textwidth]{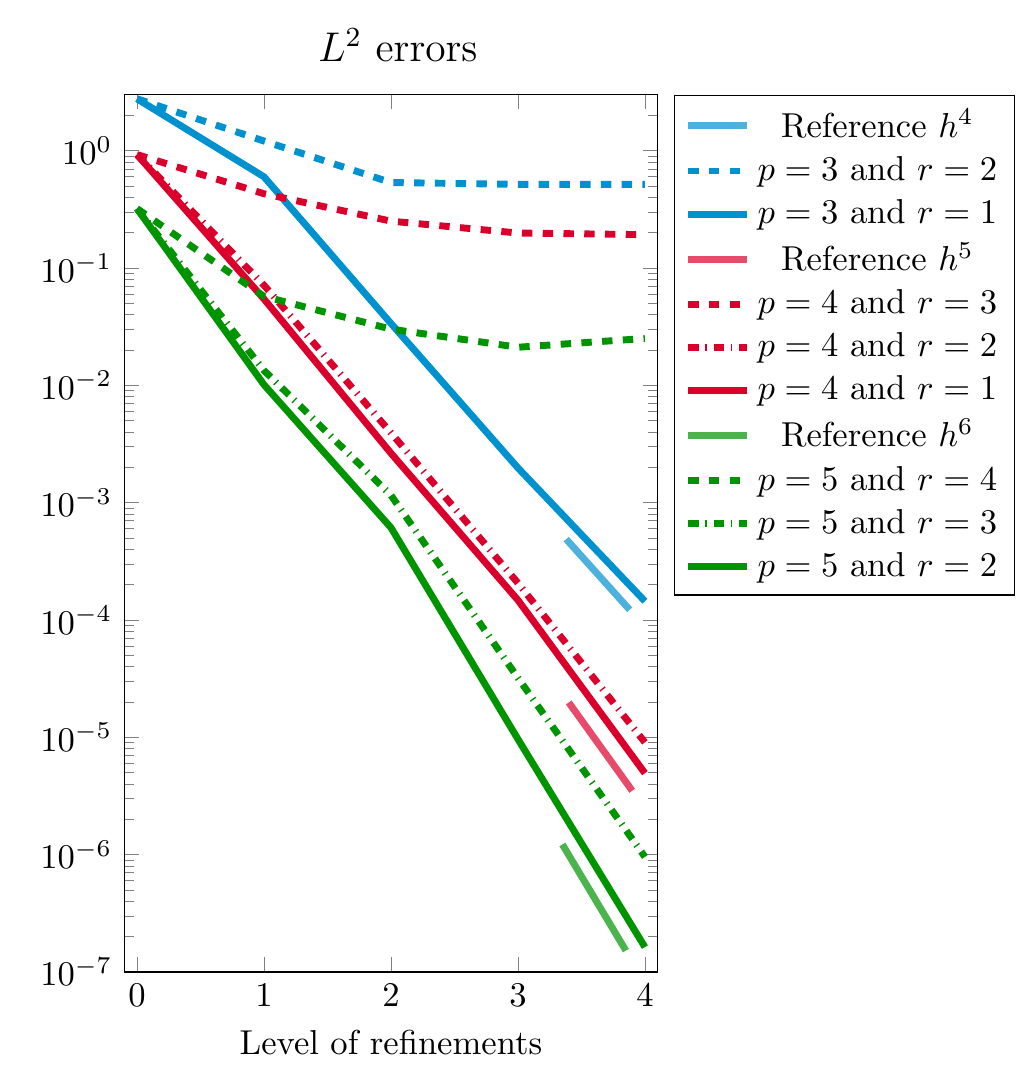}\\
	\end{tabular}
	\caption{Convergence results for the four-patch rectangle (left) and the five-patch simply-connected
domain with smooth boundary (right).}
	\label{fig:3and5PatchesConv}
\end{figure}

\subsection{Non-analysis-suitable $G^1$ geometries}

In this section we study two different examples of non-analysis-suitable $G^1$ geometries. First we 
consider a two-patch parametrization of a rectangle, where we compare a distorted parametrization with 
the undistorted identity mapping, see Figure~\ref{fig:IdDist}. 
As a second example we consider a five-patch circle.

\subsubsection{Two-patch geometry (quadratically distorted rectangle)}

\begin{figure}[h]
	\centering
		\includegraphics[width=\textwidth]{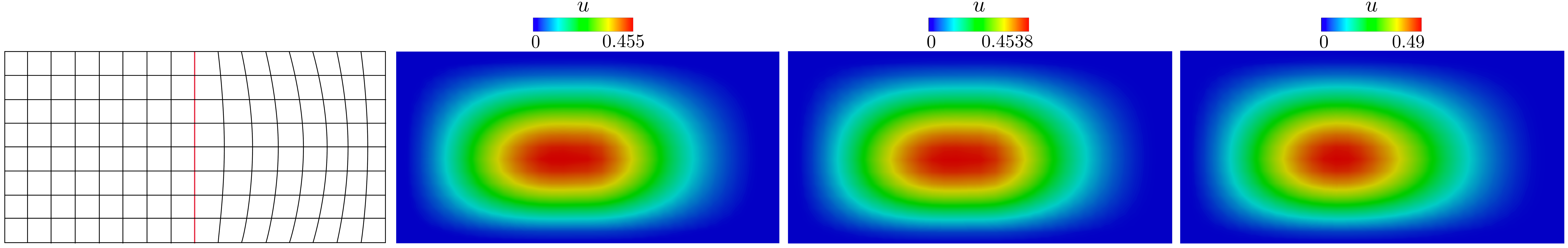}
	\caption{Non-AS $G^1$ rectangle (left),  two solutions
          affected by $C^1$ locking for $p=3$, $r=2$ (center-left) 
          and $p = 3$, $r = 1$ (center-right), as well as a correct numerical solution for $p = 4$, $r = 1$ (right).}
	\label{fig:RectDistResults}
\end{figure}

We consider a rectangle domain $[-1,1]\times [0,1]$ formed by two patches, where the left  one is
linear (identity) and right one is quadratic in the horizontal
direction and linear in the vertical, see
Figure~\ref{fig:RectDistResults} (left). This case illustrates the configuration covered by 
Theorem~\ref{thm:trace-for-general-parametrization} and Corollary~\ref{cor:convergence-general-parametrization}.
As shown in Figure~\ref{fig:RectDistResults}, with degree $p = 3$, $C^1$ locking is manifested for both  $r = 2$ and $r = 1$, see
respectively the second and the third columns of the
figure.  In order to have convergence, a degree of at least $p=4$ has to
be selected, see Figure~\ref{fig:RectDistResults} (right).

However, as anticipated in
Corollary~\ref{cor:convergence-general-parametrization}, we cannot
expect optimal convergence.  This is a direct implication of Theorem
\ref{thm:trace-for-general-parametrization}, which for this case ($p_{\alpha} =
2$ and $\beta^{(R)}=0$) states that for all $\phi \in \V^1$
\begin{equation}\label{eq:deriv-degree2}
   \left . \frac{\partial \phi }{\partial
    x}\right |_{\Gamma}  \in   \S^{p-p_{\alpha}}_{r} (\Gamma)= \S^{2}_{1} (\Gamma).
\end{equation}
Assume that the exact solution
$\disp$ is  smooth enough
and let $\disp_h$ be the numerical solution. Then, 
using \eqref{eq:deriv-degree2}, together with the  usual approximation estimates in Sobolev norms and 
the trace inequality for Sobolev spaces, gives 
 \begin{displaymath}
    \begin{aligned}
  C_{approx}  h^{2.5} & \simeq  \left \| \frac{\partial \disp }{\partial  x} -\frac{\partial \disp_h }{\partial
    x}\right \|_{H^{\frac{1}{2}}(\Gamma)} \leq C_{trace} \| \disp-\disp_h\|_{H^2(\Omega)},
    \end{aligned}
\end{displaymath}
instead of the optimal order of convergence, that is $h^3$ when measuring 
the error in the $H^2(\Omega)$-norm. 

\begin{figure}[h]
	\centering
		\includegraphics[width=0.6\textwidth]{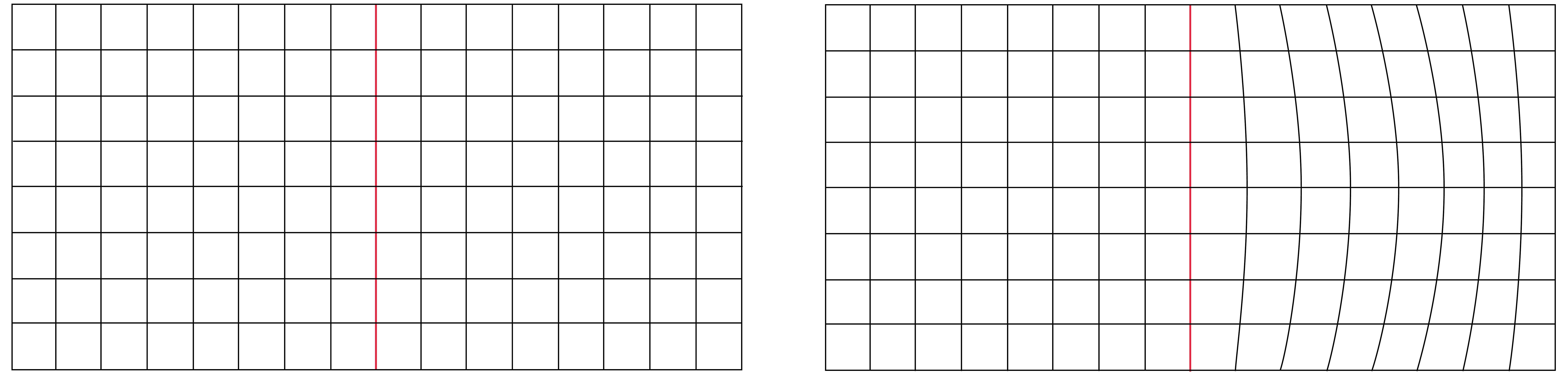}
	\caption{Two-patch rectangle with identity mapping (left), two-patch rectangle with a quadratic distorted patch (right).}
	\label{fig:IdDist}
\end{figure}
In Figure~\ref{fig:IdDistErrorL2H2} we report the convergence results
and, due to the specificity of this case, both $L^2$ and $H^2$ norms
are plotted. 
While $C^1$ locking is easily recognized, from these numerical tests
it is difficult to measure the expected sub-optimality of the
asymptotic behavior.  This is likely a numerical artifact  due to the imposition of the $C^1$ constraint in our
implementation, which is discussed in Section
\ref{sec:numerical-implement}.  We further analyze this example in
Figure~\ref{fig:IdDistUndefComparison}, where  we compute, for $p = 4$ and
$r = 1$,  the error only on the left patch, that is  $[-1,0]\times [0,1]$, and compare the
results for this geometry and for a  reference geometry formed by  the identity mapping on both  
patches (see Figure~\ref{fig:IdDist}). Note that on the left patch
both parametrizations are the identity mapping.

\begin{figure}[h]
	\centering
	\begin{tabular}{cc}
		\includegraphics[width=0.51\textwidth]{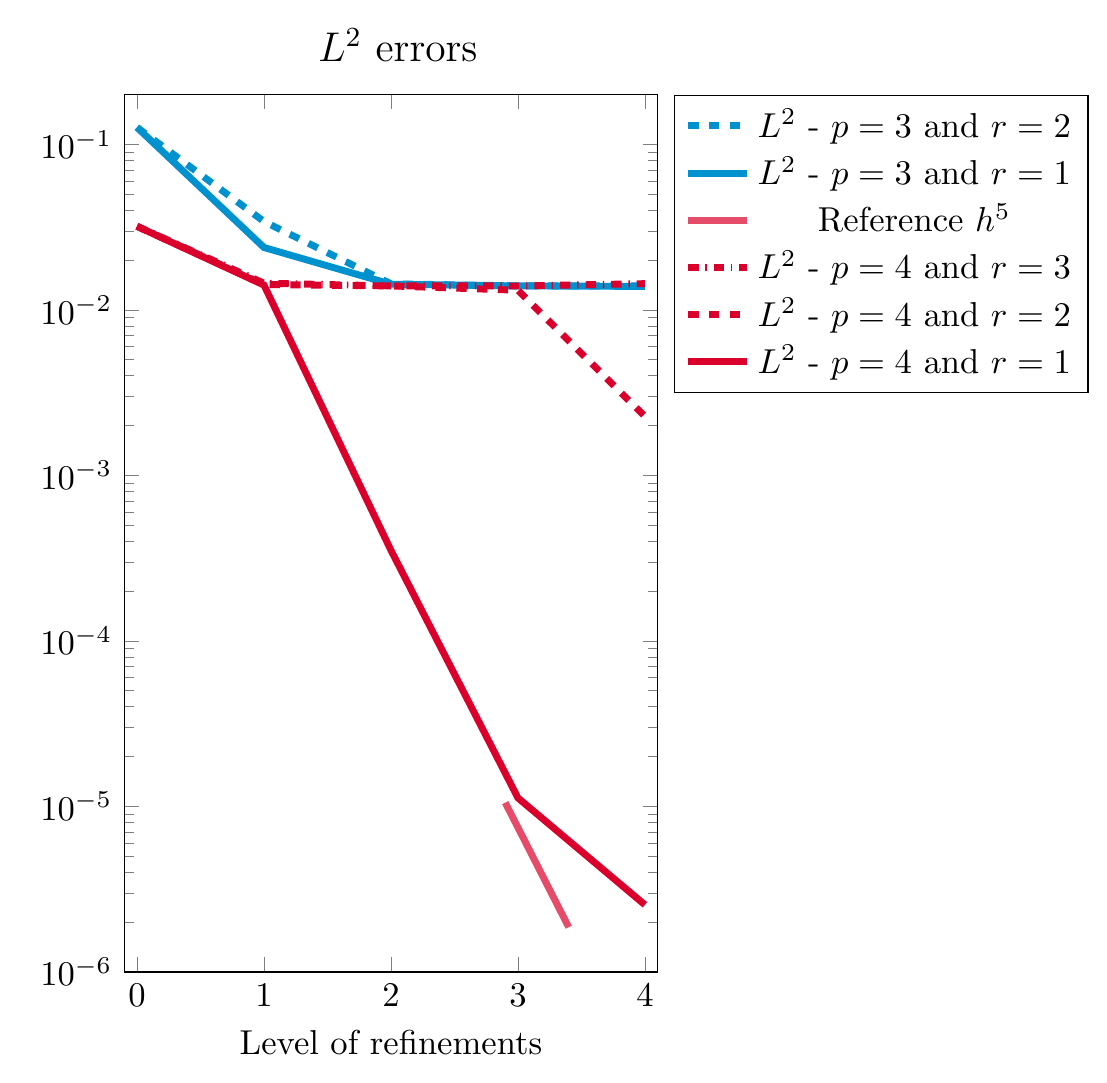}
           &
		\includegraphics[width=0.51\textwidth]{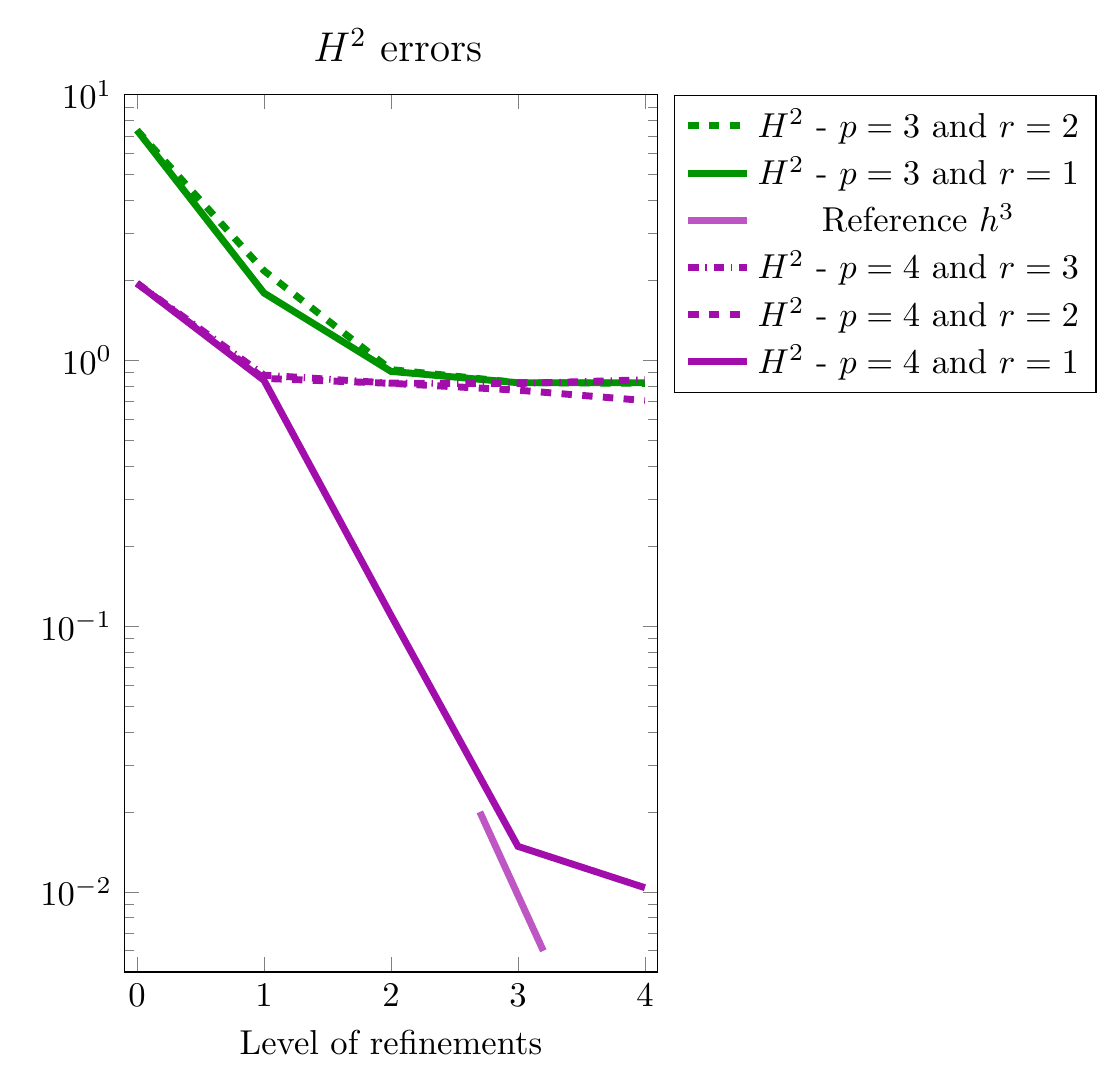} \\
	\end{tabular}
	\caption{Convergence results for the  non-AS $G^1$ two-patch
          rectangle, with error in $L^2$  (left)  and $H^2$
          (right).}
	\label{fig:IdDistErrorL2H2}
\end{figure}
\begin{figure}[h]
	\centering
	\begin{tabular}{cc}
		\includegraphics[width=0.49\textwidth]{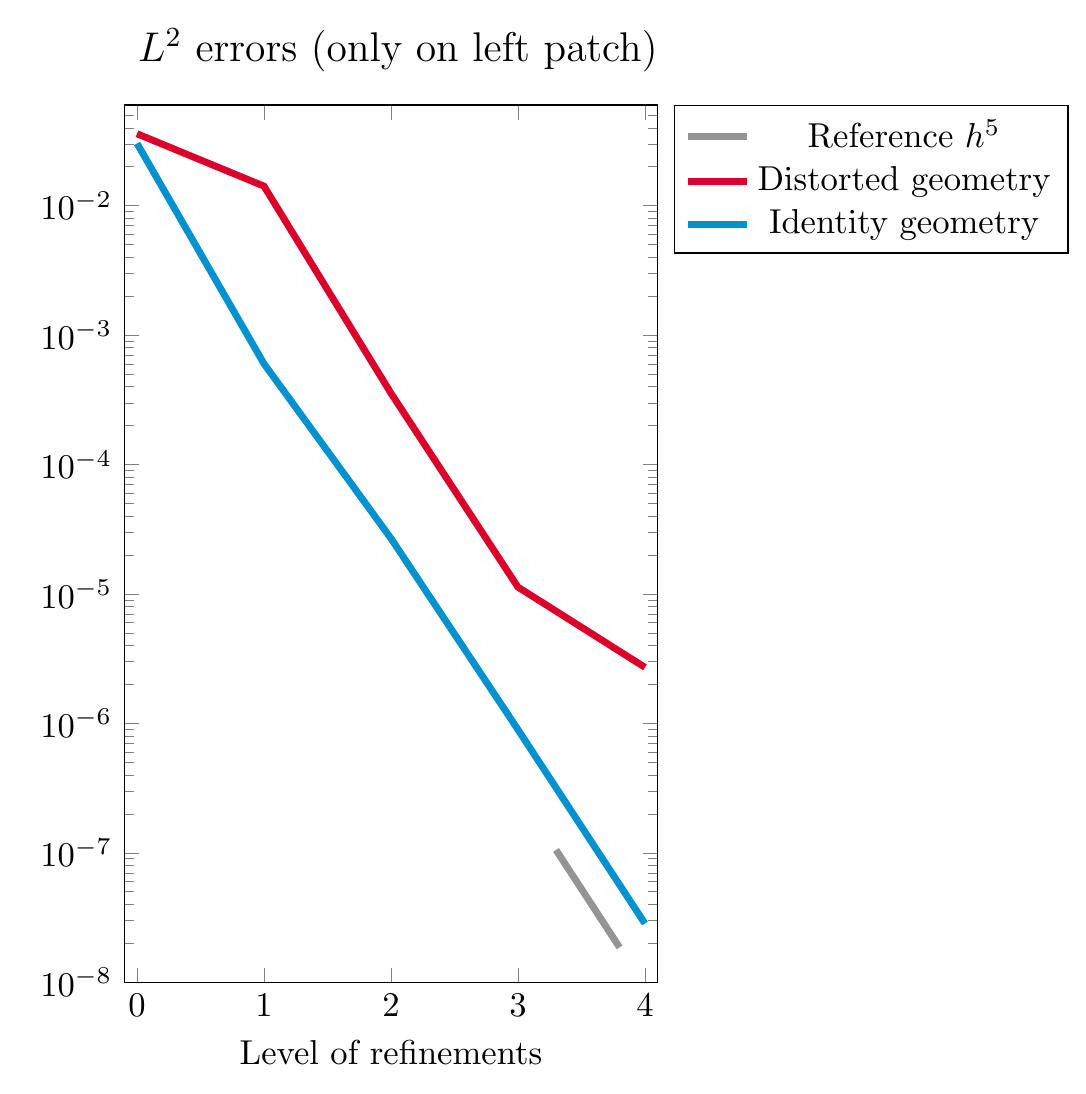} & 
		\includegraphics[width=0.49\textwidth]{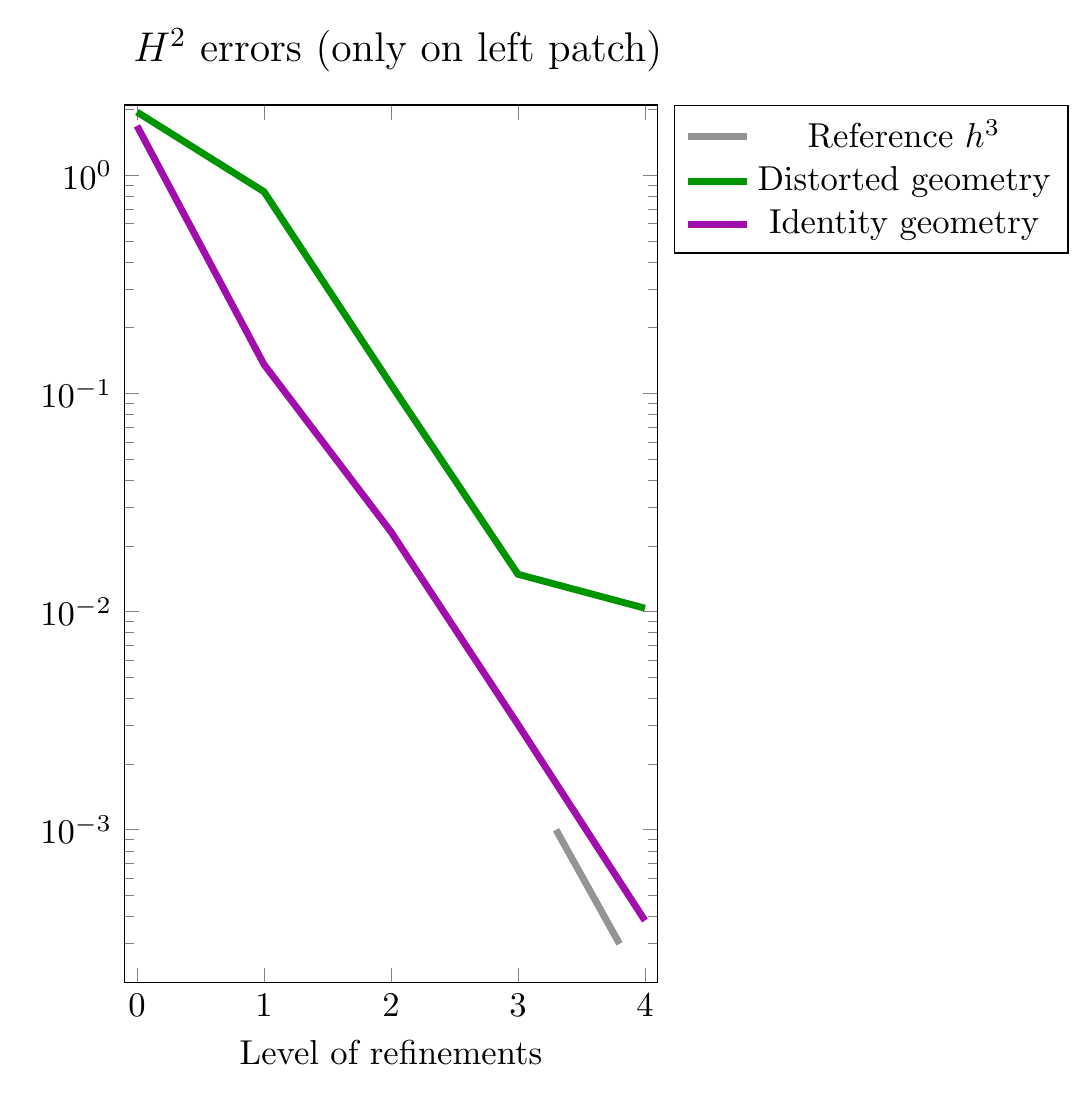} \\
	\end{tabular}
	\caption{Comparison of convergence between the two geometries
          given in Figure~\ref{fig:IdDist} for $p = 4$ and $r = 1$;
          the error is computed only in the left subdomain $ [-1,0] \times
          [0,1]$ in $L^2$  (left plot)  and $H^2$ (right plot).}
	\label{fig:IdDistUndefComparison}
\end{figure}

\subsubsection{Multi-patch geometry (circle)}

\begin{figure}[h]
	\centering
		\includegraphics[width=\textwidth]{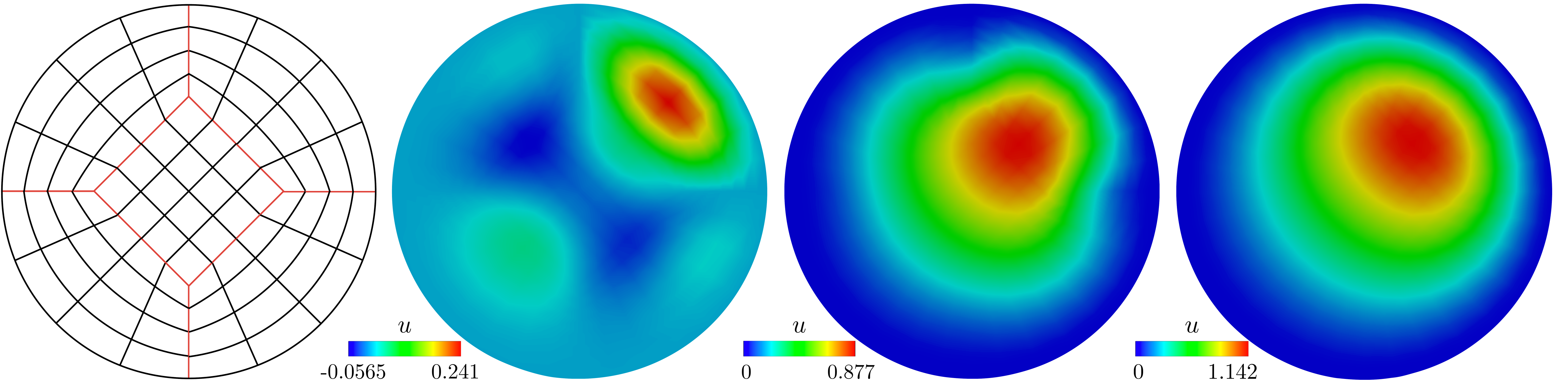}
	\caption{Five-patch circle (left),  two solutions
          affected by $C^1$ locking for $p = 3$,
          $r = 2$ (center-left) and for $p = 3$, $r = 1$ (center-right), as well as a correct numerical solution for $p = 4$, $r = 1$ (right).}
	\label{fig:CircleResults}
\end{figure}

In the last example, we study an exact circle composed of five patches, given in
Figure~\ref{fig:CircleResults} (left). Here, we are interested
in testing  rational parametrizations that are beyond the framework
presented in Section \ref{sec:beyond-splines},  since their homogeneous representation is not an 
analysis-suitable $G^1$ surface. Even though  our theory does not
apply, the numerical results obtained are consistent with 
 our findings. For degree $p = 3$ the numerical solution suffers
 of $C^1$ locking, as one can see in Figure~\ref{fig:CircleResults} for regularity
 $r = 2$ (center-left)
 and $r=1$ (center-right). For degree $p=4$ and regularity $r=1$, see Figure~\ref{fig:CircleResults} (right), we 
observe convergence to a solution.
 Figure~\ref{fig:RectangleDistCircleConv} (right) gives the convergence
 behavior for degrees $p = 3, 4$ and $5$. Sub-optimality in all situations is manifested.

\begin{figure}[h]
	\centering
	\begin{tabular}{cc}
		\includegraphics[width=0.5\textwidth]{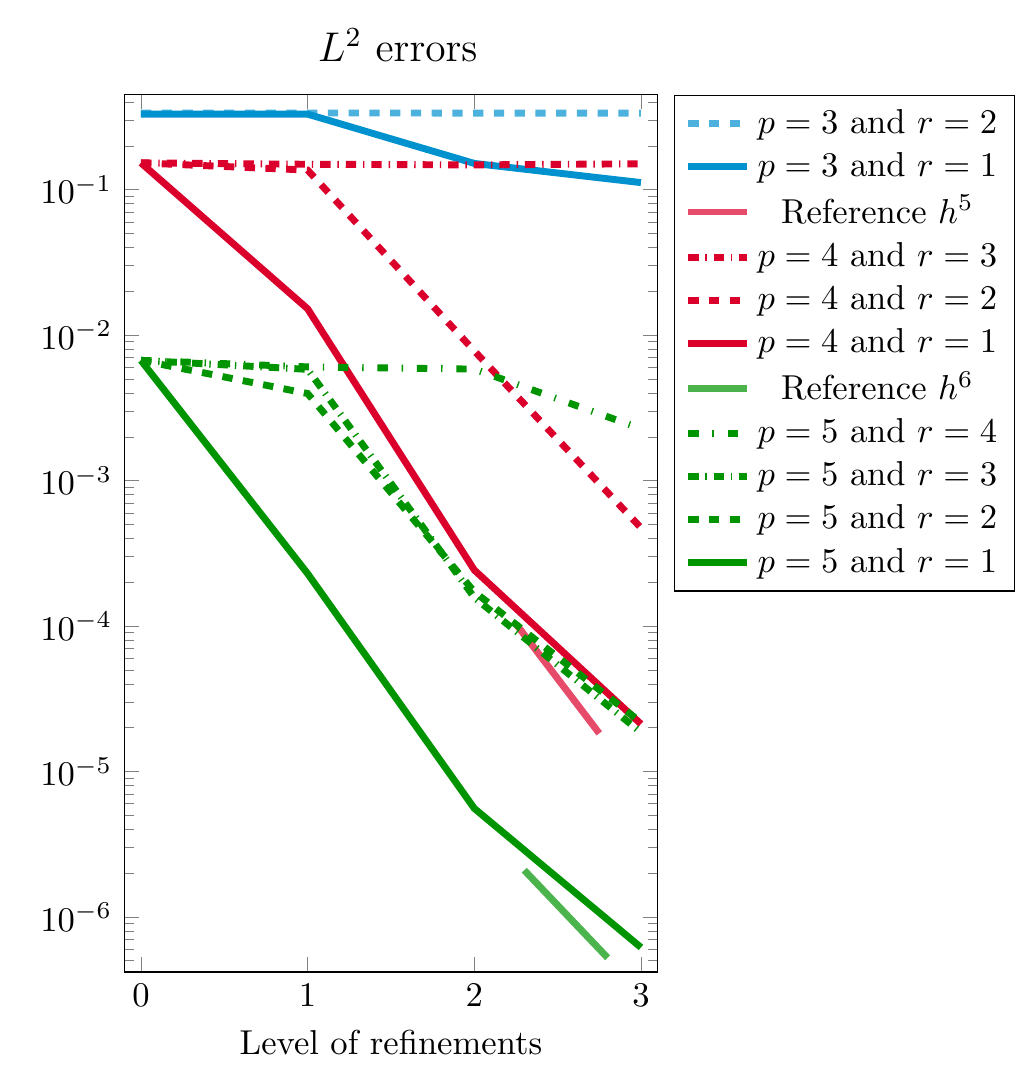} \\
	\end{tabular}
	\caption{Convergence results for the five-patch circle.}
	\label{fig:RectangleDistCircleConv}
\end{figure}

\subsection{Numerical implementation of  $C^1$ continuity} \label{sec:numerical-implement}

In this section, we describe the numerical implementation of $C^1$
continuity that we have used in order to obtain all the numerical
examples given previously. 
Let $\matrixSystem$ and $\rhs$, be the
matrix and the right and side of the system corresponding to the
variational formulation \eqref{eq:biharmonicPbVarForm}, where no
boundary condition or continuity condition are included. Furthermore, let $\ConstraintDer$ be the $C^1$ constraint matrix in symmetric form, i.e.
\[
	(\ConstraintDer)_{i,j} = \int_\Gamma [ \! [ \vec{\nabla} \phi_i \cdot \vec{n}  ] \! ]  \,   [ \! [ \vec{\nabla} \phi_j \cdot \vec{n} ] \! ] , 
\]
where $\vec{n}  $ is a normal unitary vector and $[ \! [ \cdot  ] \! ]
$ is the jump at the patch interface $\Gamma$. 
Let $\NSBCCont$ be the  change of basis matrix from the fully
unconstrained space to the subspace fulfilling boundary condition
and $C^0$ continuity. $\NSBCCont$ can be seen as obtained from
the identity $\Id_{Nunc}$ --~where $Nunc$ is the number of degrees of
freedom without constraints~-- by removing the columns with index
corresponding to the degrees of freedom of the boundary conditions and
by summing  the columns of degrees of freedom that are shared on $\Gamma$. 
Since it is not trivial to  compute a $C^1$ continuous basis
analytically,   we operate numerically by computing the null-space  
\begin{equation}
  \label{eq:nullspace}
  	\NSBCContDer  = \text{null} (\NSBCCont^T \ConstraintDer
        \NSBCCont). 
\end{equation}
Then we solve the following problem: find $\solBCContDer$ such that
\begin{equation} \label{systemBCContDer}
  \NSBCContDer^T \NSBCCont^T \matrixSystem \NSBCCont \NSBCContDer \, \solBCContDer = \NSBCContDer^T \NSBCCont^T \rhs,
\end{equation}
and obtain the solution in the unconstrained initial spline basis as $\sol = \NSBCCont \NSBCContDer \solBCContDer$.

 The numerical computation of \eqref{eq:nullspace}  is a hard task for
 non AS  $G^1$ geometries.  Indeed, the non-zero eigenvalues of  $\NSBCCont^T \ConstraintDer
        \NSBCCont$ are not well separated from  the eigenvalues that are numerically zero (close to machine precision), as
        shown in Figure~\ref{fig:EVDistCase} for the  distorted
        rectangular domain in   Figure~\ref{fig:RectDistResults}
        (left), with $p = 4$, $r = 1$.  For the next refinement step, the distinction may not be possible anymore. By comparison,
        Figure~\ref{fig:EVLshapeCase} shows the eigenvalues for the
        L-shaped domain with the same degree and regularity.  In this case, it is easier to distinguish between numerical zeros (around $10^{-14}$) and non-zero eigenvalues (above $10^{-2}$). The two
        configurations are structurally equivalent, i.e. the topology
        of the control point grid is the same.  
\begin{figure}[h]
	\centering
		\includegraphics[width=0.6\textwidth]{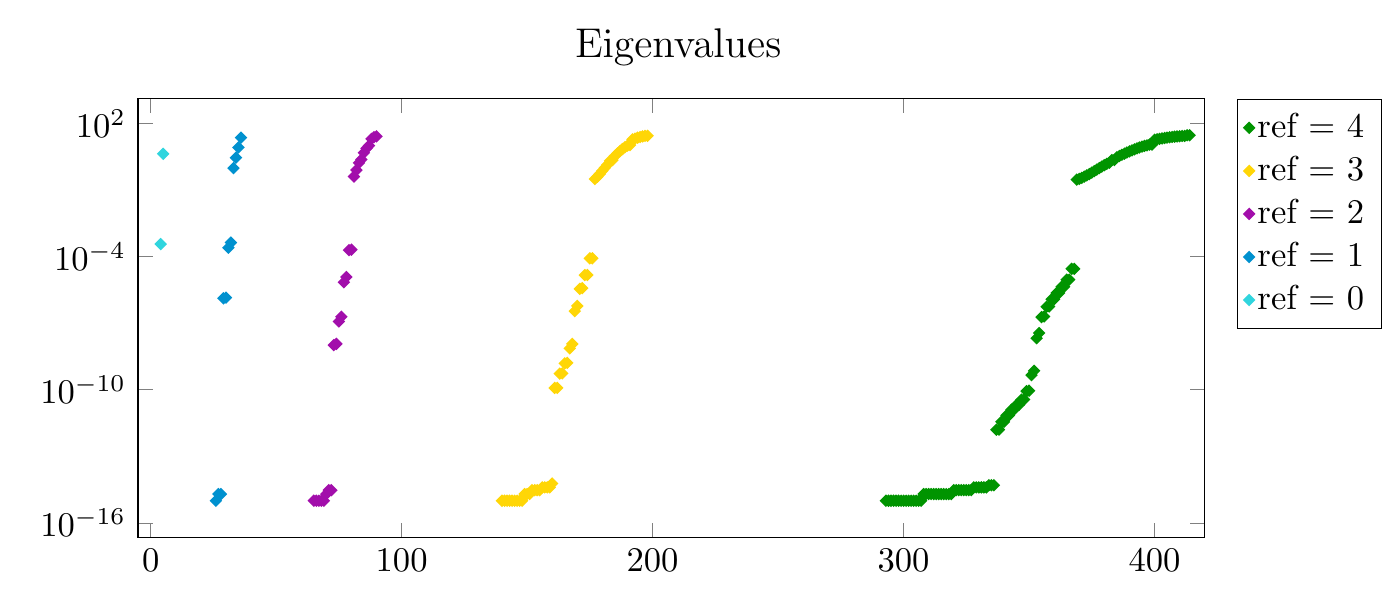}
	\caption{Eigenvalues of the domain given in
          Figure~\ref{fig:RectDistResults} (left) for p = 4 and r = 1,
        and different $h$-refinement levels.}
	\label{fig:EVDistCase}
\end{figure}

\begin{figure}[h]
	\centering
		\includegraphics[width=0.6\textwidth]{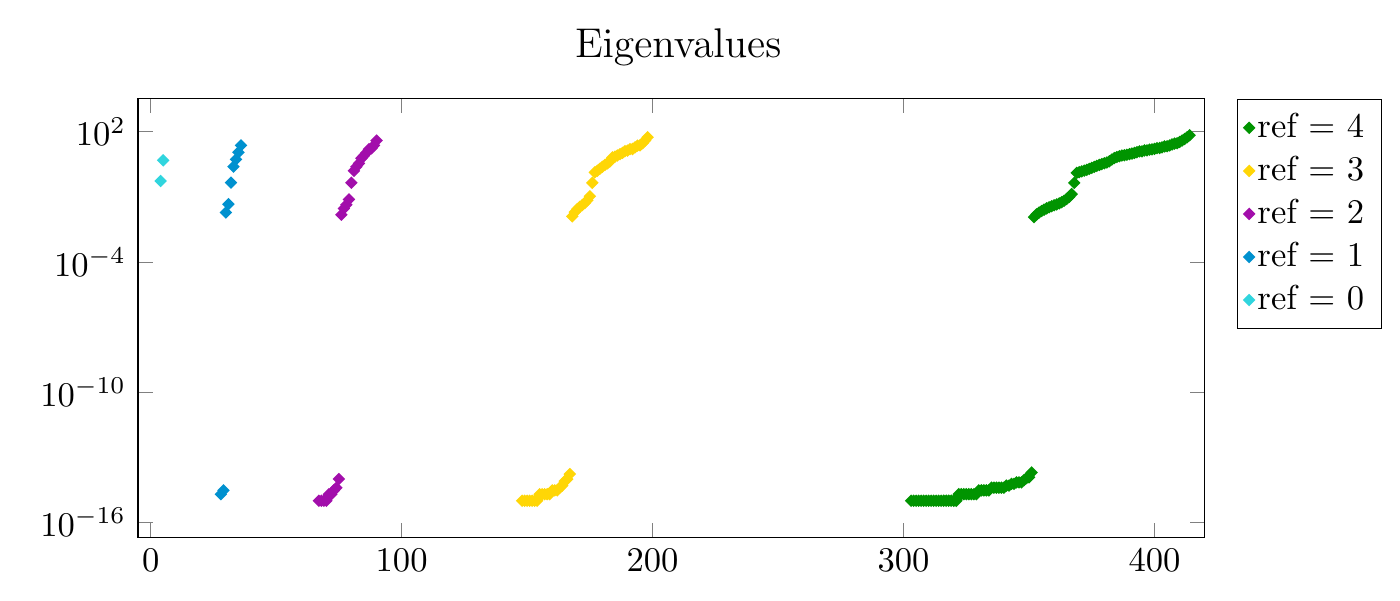}
	\caption{Eigenvalues of the L-shaped domain given in Figure~\ref{fig:LResults} (left) for p = 4 and r = 1,
        and different $h$-refinement levels.}
	\label{fig:EVLshapeCase}
\end{figure}

\section{Conclusions}
\label{conclusions}

In this paper, we have studied $C^1$-smooth isogeometric function spaces over
multi-patch geometries. We have considered geometry
parametrizations composed of multiple patches that meet $C^0$ at patch
interfaces. 
As it is common for isogeometric methods,  the geometry parametrization is
given initially and considered fixed. From that, the geometric continuity conditions are derived. The same conditions need to be satisfied for the graph parametrizations of the isogeometric functions in order to obtain a $C^1$-smooth function space over the given geometry.

We have studied how these conditions affect the traces and the transversal
derivative of the isogeometric function spaces along the patch
interfaces.Indeed, if the trace or transversal derivative at the interface is
over-constrained, the approximation order of the isogeometric function
space is reduced. We identify a class of configurations that allows
for optimal approximation properties of $C^1$ isogeometric spaces, so
called \emph{analysis-suitable $G^1$} (AS $G^1$) geometry
parametrizations. We show  numerically  that AS $G^1$ geometries indeed allow for
optimal approximation.  We have not  developed a complete approximation
theory with classical error estimates, but this  will be  the topic of
a future paper.  Parametrizations that are not AS
$G^1$ cause  suboptimal  order of 
approximation.  In the worst case the convergence 
under $h$-refinement is prohibited, a behaviour that we have named
$C^1$ locking. 

We have addressed mainly the case of  planar B-spline geometries, but 
we have briefly discussed the generalization to  surfaces and NURBS patches. All the
results are supported and confirmed by numerical tests for various
degrees and orders of regularity of the spline space. We have numerically
solved a bilaplacian problem over several AS and
non AS $G^1$  geometries,  by a standard Galerkin
approach. As we pointed out, the numerical implementation of the $C^1$
conditions poses some non-negligible difficulties for complex
configurations of non AS $G^1$ geometries.

An important question that remains to be studied in more detail is the
flexibility of analysis-suitable $G^1$ geometries. As we have shown,
the AS $G^1$ class contains bilinear patches but extends to more
general configurations. We formulate the problem of flexibility in the
following way:  Given a collection of boundary curves, is it possible
to find patches that interpolate the boundary curves and that form an
AS $G^1$ geometry?  For piecewise linear boundary curves, the problem
can be solved by using bilinear patches.  In Figure
\ref{fig:RectangleResults} we have
shown the interesting example of a piecewise biquadratic AS $G^1$ parametrization of
a $C^1$ simply-connected domain. The  extension to arbitrary
degrees and topology  deserves further investigation. 

The construction for
AS $G^1$ surfaces could be more difficult. In the
planar case, the interior parametrization of the patches may be
modified in order to achieve analysis-suitability. This is not
feasible for surfaces, as the surface itself changes if the
parametrization of the interior is changed. AS $G^1$ constructions are
possible if the surface is given as the image of a planar domain. For
the general setting, an explicit construction remains an open problem
to be considered in future research. 

\section*{Acknowledgments}
The authors would like to thank  Pablo Antolin and Massimiliano
Martinelli for fruitful discussions on the topics of the paper, and
the anonymous reviewers for valuable comments and suggestions.
The authors were partially supported by the European Research Council
through the  FP7 ERC Consolidator Grant n.616563 \emph{HIGEOM}, and by the Italian
MIUR through the PRIN  ``Metodologie innovative nella modellistica differenziale numerica''.  This support is gratefully acknowledged.

\section*{Appendix A: AS $G^1$ parametrization of a quarter of circle}

In this appendix we discuss in detail the construction of the geometry
parametrization of the example shown in Figure
\ref{fig:QuartOfCircleResults} in Section
\ref{sec:multi-patch-ASG1}. This is based on the NURBS setting of
Section~\ref{sec:beyond-splines} and on the ideas in
\cite{bercovier2014smooth} as well as \cite[Section
3.4]{kapl-vitrih-juttler-birner-15}.  
Each patch of the three-patch geometry is constructed from a combination of a bilinear mapping 
\begin{equation*}
    	\f B^{(i)}: [0,1]^2 \rightarrow Q^{(i)} \subset \Delta = \{ (u,v) \in [0,1]^2 \, : \, u+v \leq 1\}
\end{equation*}
onto a quadrilateral $Q^{(i)}$ within a reference triangle $\Delta$ and a global mapping 
\begin{equation*}
    	\widetilde{\f G}: \Delta \rightarrow \widetilde\Omega \subset \RR^3
\end{equation*}
from the reference triangle to the surface patch $\widetilde\Omega$ in homogeneous coordinates. These mappings and corresponding domains are shown in Figure~\ref{fig:appendix}. 
\begin{figure}[h]
	\centering
	\includegraphics[width=0.8\textwidth]{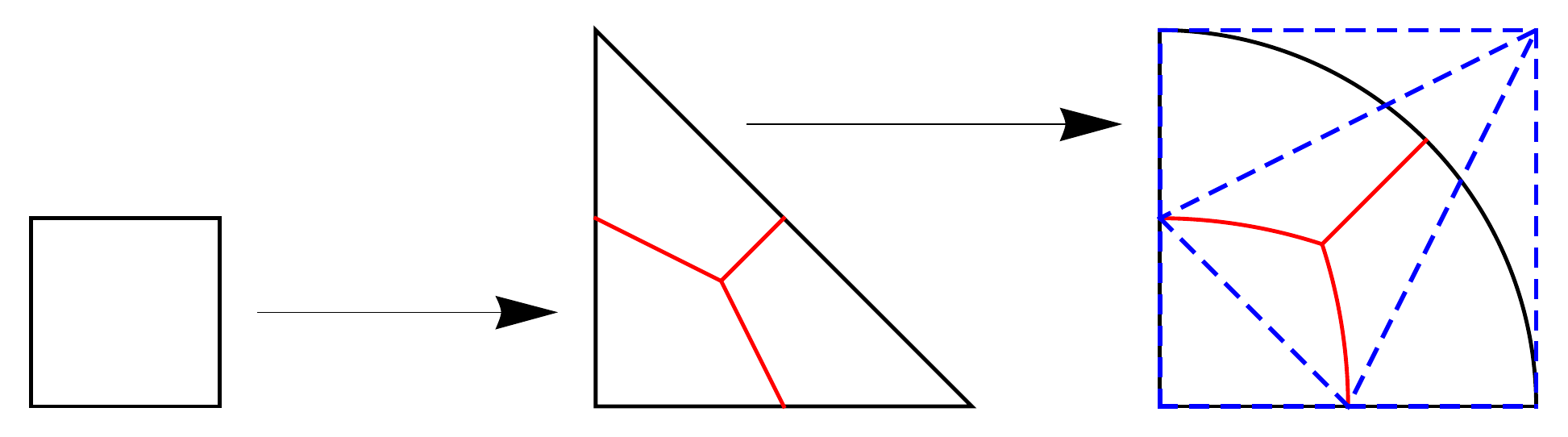}
	\put(-160,60){$\f G$}
	\put(-290,35){$\f B^{(i)}$}
	\put(-360,25){$[0,1]^2$}
	\put(-220,20){$Q^{(i)}$}
	\put(-225,70){$\Delta$}
	\put(-80,70){$\Omega$}
	\caption{Mappings $\f B^{(i)}$ and $\f G$ and corresponding domains.}
	\label{fig:appendix}
\end{figure}
 
Here the mapping $\widetilde{\f G}$ is a triangular B\'ezier patch of total degree $p=2$ with
\begin{equation*}
\widetilde{\f G} (s,t) = \sum_{i+j+k = 2} g_{i,j,k} \; s^i \; t^j \; (1-s-t)^k \; \frac{i ! \, j ! \, k ! }{2},
\end{equation*}
with control points 
\begin{equation*}
\begin{array}{lll}
g_{0,0,2} = (0,0,1)^T & g_{0,1,1} = (0,\sqrt{2},2\sqrt{2})^T & g_{0,2,0} = (0,1,1)^T \\
g_{1,0,1} = (\sqrt{2},0,2\sqrt{2})^T & g_{1,1,0} = (2\sqrt{2},2\sqrt{2},2\sqrt{2})^T &  \\
g_{2,0,0} = (1,0,1)^T & &
\end{array}
\end{equation*}
in homogeneous coordinates. In Figure~\ref{fig:appendix}, the dashed blue lines represent the corresponding control point grids in Cartesian coordinates. Each quadrilateral $Q^{(1)}$, $Q^{(2)}$ and $Q^{(3)}$ is formed by one corner point of the triangle $\Delta$, two adjacent edge midpoints as well as the center of gravity $(1/3,1/3)^T$. The bilinear mapping $\f B^{(i)}$ is (up to rotations of the parameter domain) completely determined by its image $Q^{(i)}$. By construction, the mapping $\f F^{(i)} = \f G \circ \f B^{(i)}$ is a rational bi-quadratic function in each component. 

Using this construction, the theory developed in Sections \ref{surfaces} and \ref{sec:beyond-splines} can be applied to the example in Figure~\ref{fig:QuartOfCircleResults}.

\section*{Appendix B: AS $G^1$ parametrization of a smooth
  simply-connected domain}

In the following we give a description of the domain depicted in Figure \ref{fig:5PatchSmoothResults}. 
The domain is composed of five bi-quadratic patches and the boundary is $C^1$-smooth. The central patch 
is just the shifted unit square $\Omega^{(c)} = [-\frac{1}{2},\frac{1}{2}]^2$ parameterized by 
\begin{equation*}
\f F^{(c)} (u,v) = \left( u-\frac{1}{2}, v-\frac{1}{2} \right)^T.
\end{equation*}
The top patch $\Omega^{(t)}$ is parameterized by 
\begin{equation*}
\f F^{(t)} (u,v) = \left( 
\begin{array}{c}
\left(u-\frac{1}{2}\right)\left( 1 + \frac{\sqrt{17}-3}{2}v - \frac{\sqrt{17}-5}{2}v^2 \right)\\
2v^2\left(u-u^2\right) + \frac{1}{2}\left( 1 + \frac{\sqrt{17}-3}{2}v - \frac{\sqrt{17}-5}{2}v^2 \right)
\end{array}
\right)
\end{equation*}
and the parametrizations $\f F^{(l)}$, $\f F^{(b)}$ and $\f F^{(r)}$ for the left, bottom and right patches are given by rotations of the top patch.

\end{document}